
\documentclass[12pt]{article}

\usepackage{times}

\usepackage[utf8]{inputenc}
\usepackage{commath}
\usepackage{amsthm, amscd}
\usepackage{amsmath}
\usepackage{amssymb}
\usepackage{cite}
\usepackage{setspace}





\usepackage{bigints}
\usepackage{comment}
\usepackage{mathtools}
\usepackage{mathrsfs}
\usepackage{fancyhdr}

\setlength{\parskip}{0em}
\allowdisplaybreaks[4]

\numberwithin{equation}{section}
\usepackage{etoolbox}
\patchcmd{\thebibliography}
  {\settowidth}
  {\setlength{\itemsep}{0pt plus -10pt}\settowidth}
  {}{}
\apptocmd{\thebibliography}
  {
  }
  {}{}
\makeatletter
\newtheorem*{rep@theorem}{\rep@title}
\newcommand{\newreptheorem}[2]{%
\newenvironment{rep#1}[1]{%
 \def\rep@title{#2 \ref{##1}}%
 \begin{rep@theorem}}%
 {\end{rep@theorem}}}
\makeatother

\theoremstyle{theorem}

\newreptheorem{theorem}{Theorem}
\newtheorem{thm}{Theorem}[section]
\newtheorem*{thm*}{Theorem}
\theoremstyle{definition}
\newtheorem{prop}[thm]{Proposition}
\newtheorem*{prop*}{Proposition}
\newtheorem{defn}[thm]{Definition}

\newtheorem{cor}[thm]{Corollary}
\newtheorem*{cor*}{Corollary}
\theoremstyle{remark}

\newtheorem{const}[thm]{Construction}
\newtheorem{rem}[thm]{Remark}

\newtheorem{notat}[thm]{Notation}

\title{Quillen metric for singular families of Riemann surfaces with cusps and compact perturbation theorem.} 

\author
{Siarhei Finski
}

\date{}

\usepackage[%
    left=1in,%
    right=1in,%
    top=1.1in,%
    bottom=0.8in,%
    paperheight=11in,%
    paperwidth=8.5in%
]{geometry}

\newcommand{\imun} {\sqrt{-1}}
\newcommand{\vol}{v}

\newcommand{\reg}{{\textbf{r}}}
\newcommand{\comp}{\mathbb{C}}
\newcommand{\real}{\mathbb{R}}

\newcommand{\nat}{\mathbb{N}}
\newcommand{\integ}{\mathbb{Z}}

\newcommand{\dd}{\mathbb{D}}

\newcommand{\tinyint}{\begingroup\textstyle\int\endgroup}


\newcommand{\res}{{\rm{Res}}}


\newcommand{\ccal}{\mathscr{C}}

\newcommand{\dbar}{ \overline{\partial} }

\newcommand{\laplcomp}{\Box}

\newcommand{\rk}[1]{{\rm{rk}} ( #1 )}

\renewcommand{\Re}{\operatorname{Re}}
\renewcommand{\Im}{\operatorname{Im}}
\newcommand{\scal}[2]{\big< #1, #2 \big>}

\newcommand{\modul}{\mathscr{M}}
\newcommand{\modulcomp}{\overline{\mathscr{M}}}

\newcommand{\univcurv}{\mathscr{C}}
\newcommand{\univcurvcomp}{\overline{\mathscr{C}}}

\newcommand{\td}{{\rm{Td}}}
\newcommand{\ch}{{\rm{ch}}}

\newenvironment{sciabstract}{}


\begin{document} 
\maketitle

\begin{sciabstract}
  \textbf{Abstract.} We study the behavior of the Quillen metric for the family of Riemann surfaces with cusps when the additional cusps are created by degeneration.
  \par More precisely, in our previous paper, we've seen that the renormalization of the Quillen metric associated with a family of Riemann surfaces with cusps extends continuously over the locus of singular curves. The main result of this article shows that, modulo some explicit universal constant, this continuous extension coincides with the Quillen metric of the normalization of singular curves. This result shows that the Quillen metric is compatible with the adjunction of cusps.
  When this theorem is applied directly to the moduli space of curves, we obtain the compatibility of the Quillen metric with clutching morphisms in the moduli space of pointed stable curves.
  \par As one application, we obtain the compatibility between our definition of the analytic torsion and the definition of Takhtajan-Zograf using lengths of closed geodesics.
  \par As a consequence of the proof of the main theorem, we get an explicit relation in terms of Bott-Chern forms between the Quillen metric associated with a cusped metric and the Quillen metric associated with a metric on the compactified Riemann surface. This refines relative compact perturbation theorem we obtained before by pinning down the universal constant.

\end{sciabstract}

\pagestyle{fancy}
\lhead{}
\chead{Quillen metric for a singular family of Riemann surfaces with cusps.}
\rhead{\thepage}
\cfoot{}


\newcommand{\Addresses}{{
  \bigskip
  \footnotesize
  \noindent \textsc{Siarhei Finski, UFR de Mathématiques, Case 7012, Université Paris Diderot-Paris 7, France.}\par\nopagebreak
  \noindent  \textit{E-mail }: \texttt{siarhei.finski@imj-prg.fr}.
}}

\tableofcontents

\section{Introduction}\label{sect_intro}
	We study the behavior of the Quillen metric for the family of Riemann surfaces with cusps when the additional cusps are created by degeneration.
	\par Let $X$ and $S$ be complex manifolds, and let $\pi : X \to S$ be a proper holomorphic map. The construction of Grothendick-Knudsen-Mumford \cite{Knudsen1976} (cf. also \cite[\S 3]{BGS3}) associates for every holomorphic vector bundle $\xi$ over $X$ the “determinant of the direct image of $\xi$" - the holomorphic line bundle over $S$, which we denote  (cf. (\ref{eq_det_r_star}))
	\begin{equation}
		\lambda(j^* \xi)^{-1} := \det (R^{\bullet} \pi_* \xi).
	\end{equation}
	\par Let's fix a holomorphic, proper, surjective map $\pi: X \to S$ of complex manifolds, such that for every $t \in S$, the space $X_t := \pi^{-1}(t)$ is a complex curve whose singularities are at most ordinary double points (in the terminology of \cite{BisBost}, \cite{FinII2}, a f.s.o.).
		In other words, we are considering a family of Riemann surfaces with double-point singularities.
		\par 
		We denote by $\Sigma_{X/S} \subset X$ the submanifold of singular points of the fibers (see Corollary \ref{cor_sigma}).
		We denote by $\Delta = \pi_*(\Sigma_{X/S})$ the divisor formed by the locus of the singular fibers $\pi$.
		\par
		In this article we only consider $\pi$ for which the associated divisor $\Delta$ has normal crossings.
		\par 
		Let $\sigma_1, \ldots, \sigma_m : S \to X \setminus \Sigma_{X/S}$ be disjoint holomorphic sections of $\pi$.
		We denote by $D_{X/S}$ the divisor on $X$, given by 
		\begin{equation}\label{defn_dxs}
			D_{X/S} = \Im(\sigma_1) + \cdots + \Im(\sigma_m).			
		\end{equation}
		\par Let the norm $\norm{\cdot}_{X/S}^{\omega}$ on the canonical line bundle $\omega_{X/S}$ (see (\ref{eq_rel_can_defn})) over $X \setminus (\pi^{-1}(|\Delta|) \cup |D_{X/S}|)$ be such that its restriction over each nonsingular fiber $X_t := \pi^{-1}(t)$, $t \in S \setminus |\Delta|$ of $\pi$ induces the Kähler metric $g^{TX_t}$ on $X_t \setminus \{ \sigma_1(t), \ldots, \sigma_m(t) \}$ such that the triple $(X_t, \{ \sigma_1(t), \ldots, \sigma_m(t) \}, g^{TX_t})$ is a surface with cusps in the sense of \cite{FinII1}, \cite{MullerCusp}, \cite{FreixasARR} (see Section \ref{sect_recall_relantors}).
		In other words, we suppose that the norm $\norm{\cdot}_{X/S}^{\omega}$ induces complete Kähler metric over $X_t \setminus \{ \sigma_1(t), \ldots, \sigma_m(t) \}$ so that the scalar curvature is equal to $-1$ away from a compact subset on $X_t$.
		Thus, the sections $\sigma_1, \ldots, \sigma_m$ are essentially parameterizing the positions of cusps in the fibers. 
		\begin{const}\label{const_norm_div}
		For a complex manifold $Y$ and a divisor $D_0 \subset Y$, let $\norm{\cdot}^{\rm{div}}_{D_0}$ be the singular norm on $\mathscr{O}_Y(D_0)$, defined by
		\begin{equation}\label{defn_norm_D}
			\| s_{D_0} \|^{\rm{div}}_{D_0}(x) = 1, \quad \text{for any } x \in Y \setminus D_0,
		\end{equation}
		where $s_{D_0}$ is the canonical section of the divisor $D_0$ with ${\rm{div}} (s_{D_0}) = D_0$. 
		\par We endow the \textit{twisted canonical line bundle} 
		\begin{equation}\label{defn_tw_can}
			\omega_{X/S}(D) := \omega_{X/S} \otimes \mathscr{O}_X(D_{X/S})
		\end{equation}
		with the canonical norm $\, \norm{\cdot}_{X/S}$ over $X \setminus (\pi^{-1}(|\Delta|) \cup |D_{X/S}|)$, induced by $\norm{\cdot}^{\omega}_{X/S}$ and $\norm{\cdot}^{\rm{div}}_{D_{X/S}}$.
	\end{const}
	One should think of the norm $\norm{\cdot}_{X/S}$ as some norm which has logarithmic singularities in the neighborhood of  $|D_{X/S}|$.
	Let $(\xi, h^{\xi})$ be a holomorphic Hermitian vector bundle over $X$. Let $h^{\det \xi}$ be the induced Hermitian metric on $\det \xi := \Lambda^{\max} \xi$.
	In \cite[Definition 2.16]{FinII1}, we defined the analytic torsion $T(g^{TX_t}, h^{\xi} \otimes \norm{\cdot}_{M}^{2n})$ of the fiber $(X_t, g^{TX_t})$ associated with a singular Hermitian vector bundle $(\xi \otimes \omega_{X/S}(D)^n, h^{\xi} \otimes \norm{\cdot}_{M}^{2n})$ through the regularized trace of the heat kernel, obtained by subtraction of a universal contribution coming from cusp.
	See Section \ref{sect_recall_relantors} for a more detailed discussion on this.
	\par 
	In \cite[\S 2.1]{FinII1}, we've also seen that for $n \leq 0$, the $L^2$-scalar product\footnote{Our normalization differs from the one used in \cite{FinII1}, \cite{FinII2} by a factor $2 \pi$. The reason for such a normalization is to make things compatible with \cite{DeligDet}, \cite{GilSoul92},  \cite{BisLeb91}, \cite{FreixasARR}, in particular, to make Serre duality an isometry, see (\ref{eq_serre_isom}). }
	\begin{equation}\label{defn_L_2}
		\scal{\alpha}{\alpha'}_{L^2} := \frac{1}{2 \pi} \int_{X_t} \scal{\alpha(x)}{\alpha'(x)}_h d \vol_{X_t}(x), \quad \alpha, \alpha' \in \ccal^{\infty}(X_t, \xi \otimes \omega_{X/S}(D)^n),
	\end{equation}
	where $\langle \cdot, \cdot \rangle_h$ is the pointwise scalar product, $d \vol_{X_t}$ is the induced Riemannian volume form on $(X_t, g^{TX_t})$, induces the natural $L^2$-norm on the determinant line bundle $\lambda(j^*(\xi \otimes \omega_{X/S}(D)^n))$, $n \leq 0$ over $S \setminus |\Delta|$.
	\par 
	In \cite{FinII1}, \cite{FinII2} (cf. Definition \ref{defn_quil_norm}), we've defined the \textit{Quillen norm} $\norm{\cdot}_{Q} (g^{TX_t}, h^{\xi} \otimes \, \norm{\cdot}_{X/S}^{2n})$ on the determinant line bundle $\lambda(j^*(\xi \otimes \omega_{X/S}(D)^n))$, $n \leq 0$ over $S \setminus |\Delta|$ as the product of the analytic torsion of the fiber $T(g^{TX_t}, h^{\xi} \otimes \norm{\cdot}_{M}^{2n})$ from \cite{FinII1}, and the $L^2$-norm of the fiber, see Section \ref{sect_recall_relantors}. 
	As we explained in \cite{FinII2}, this definition gives a non-compact $1$-dimensional version of the definition of the Quillen norm of Bismut-Gillet-Soulé \cite{BGS3} and generalizes the definition of Quillen \cite{Quillen}, which was given for $n, m = 0$ and  $\pi$, $(\xi, h^{\xi})$ trivial.
	\par
	Let's denote by $\norm{\cdot}^W_{X/S}$ the Wolpert norm on $\otimes_{i = 1}^{m} \sigma_i^{*}(\omega_{X / S})$ induced by $\norm{\cdot}_{X/S}^{\omega}$ (see Definition \ref{defn_wolpert_norm}).
	This norm tracks the change of Poincaré-compatible coordinates near the cusp.
	The necessary definitions for the following passage are given in Definitions \ref{defn_loglog_gr}, \ref{defn_crit_preloglog}, \ref{defn_logloggwth}.
		\begin{align}\label{suppos_s3}
		\begin{split}
			& \qquad \textit{We suppose  that   the Hermitian  norm  $\norm{\cdot}_{X/S}$ on $\omega_{X/S}(D)$  extends  continuously  over}
			\\
			&\textit{$X \setminus (\Sigma_{X/S} \cup |D_{X/S}|)$, has log-log growth with singularities along $\Sigma_{X/S} \cup |D_{X/S}|$, is good} 
			\\
			&\textit{in the sense of Mumford on $X$ with singularities along $\pi^{-1}(\Delta) \cup D_{X/S}$, and the coupling}
			\\
			&\textit{of \, \, $c_1(\omega_{X/S}(D), \norm{\cdot}_{X/S}^{2})$  \, \, with  \, \, two   \, \, continuous  \, \, vertical  \, \, vector  \, \, fields  \, \, over }
			\\
			&\textit{$X \setminus ( \Sigma_{X/S} \cup |D_{X/S}| )$ is continuous over $X \setminus ( \Sigma_{X/S} \cup |D_{X/S}| )$.}
		\end{split}
	\end{align}
	Then in \cite[Theorem C3]{FinII2} (cf. Theorem \ref{thm_cont}) we proved that under assumption (\ref{suppos_s3}), the norm 
		\begin{equation}\label{quil_wol_norm}
			\norm{\cdot}_{\mathscr{L}_n} := \big( \norm{\cdot}_Q (g^{TX_t}, h^{\xi} \otimes \, \norm{\cdot}_{X/S}^{2n}) \big)^{12} \otimes \big( \norm{\cdot}^W_{X/S}\big)^{-\rk{\xi}} \otimes \big( \norm{\cdot}^{\rm{div}}_{\Delta} \big)^{\rk{\xi}} \otimes (\otimes_{i=1}^{m} \sigma_i^* h^{\det \xi})^3
		\end{equation}				
		on the line bundle
		\begin{equation}\label{det_wol_prod}
			\mathscr{L}_n :=  \lambda \big( j^* (\xi  \otimes \omega_{X/S}(D)^n) \big)^{12} \otimes (\otimes_{i = 1}^{m} \sigma_i^{*}\omega_{X/S} )^{-\rk{\xi}} \otimes \mathscr{O}_S(\Delta)^{\rk{\xi}} \otimes (\otimes_{i=1}^{m} \sigma_i^* \det \xi)^6
		\end{equation}
		extends continuously over $S$. 
		The main goal of this article is to give the precise value of this continuous extension, and give a geometric interpretation of it as the Quillen norm of the normalization of a singular fiber.
		\par More precisely, as $\Delta$ has normal crossings, for any $t \in S$, by shrinking the base $S$, we may always suppose that for some $l \in \nat$, the divisor $\Delta$ decomposes in the neighborhood of $t$ as
		\begin{equation}\label{eq_div_delta_dec}
			\Delta = k \cdot \Delta_0 + k_1 \cdot \Delta_1 + \cdots + k_l \cdot \Delta_l,
		\end{equation}
		where $\Delta_i$, $i = 0, \ldots, l$ are divisors induced by the submanifolds $|\Delta_i|$ and $k, k_j \in \nat^*$, $j = 1,\ldots, l$.
		Let  $\Delta_j^{0} :=\Delta_j \cap \Delta_0$ be the induced divisor on $S' := |\Delta_0|$, and let $\Delta'$ be the divisor on $S'$ given by 
		\begin{equation}\label{eq_defn_delta_prime}
			\Delta' := k_1 \cdot \Delta_1^{0} + \cdots + k_l \cdot \Delta_l^{0}.
		\end{equation}
		Let $\iota: S' \to S$ be the obvious inclusion.
		We denote $Z := \pi^{-1}(S')$, $Z_t := \pi^{-1}(t)$, $t \in S'$, and let $\rho : Y \to Z$ be the normalization of $Z$. 
		We denote by $\pi' : Y \to S'$ the family of surfaces, induced by the following commutative square
	\begin{equation}\label{eq_sh_exct_seq}
		\begin{CD}
			Y @> \rho >> X 
			\\
			@VV \pi' V @VV \pi V
			\\
			S' @> \iota >> S
		\end{CD}
	\end{equation}
	The restriction of the holomorphic sections $\sigma_1, \ldots, \sigma_m$ on $S'$ induce the holomorphic sections, which we denote by $\sigma'_1, \ldots, \sigma'_m : S' \to Y$. See Figure 1 for an example.
	\begin{figure}[h]\label{fig_graft}
		\centering
		\includegraphics[width=\textwidth]{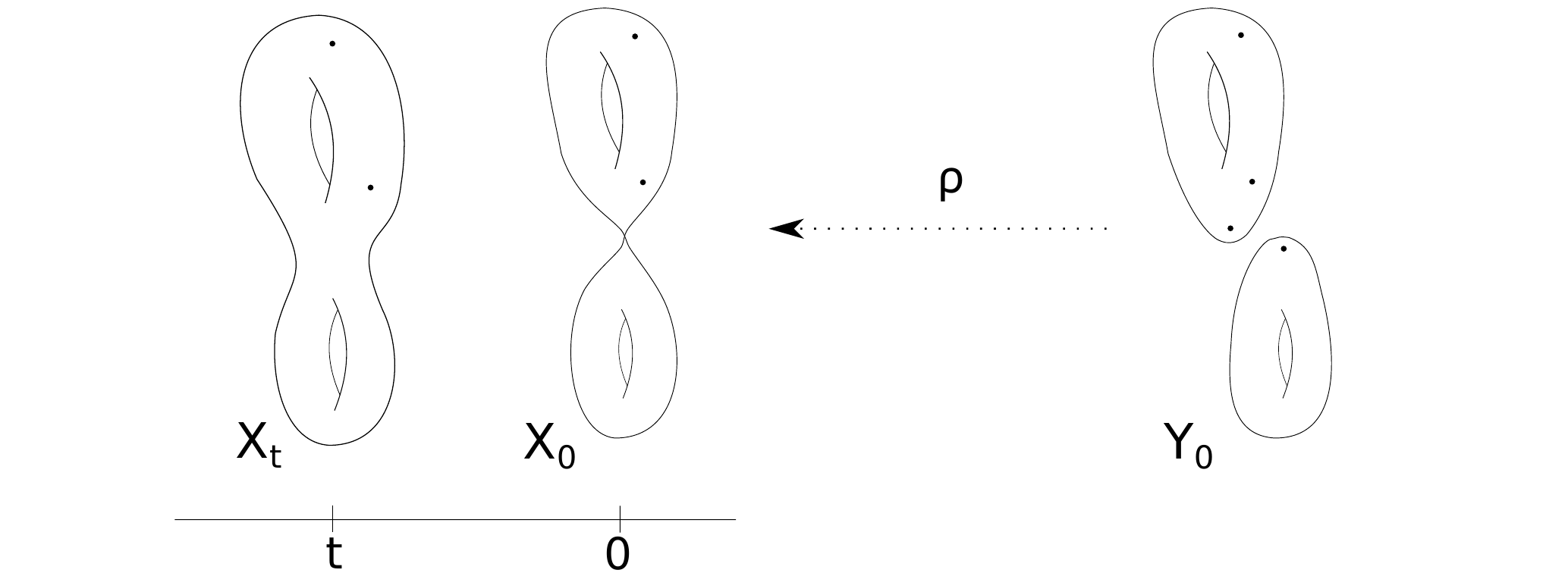}	
		\caption{A degenerating familiy. Our goal is to relate a norm on the line bundle at the singular fiber with a norm on the line bundle on its normalization. Points represent the elements in $D_{X/S}|_{X_t}$, $D_{X/S}|_{X_0}$ and $D_{Y/S'}|_{Y_0}$. Notice the added marked points on the normalization. 
		}
	\end{figure}
	\par Let $\Sigma_{Z/S'}$ be the locus of points, which get normalized in $\rho$. The manifold $\Sigma_{Z/S'}$ is a union of some connected components of $\Sigma_{X/S}$, thus, it has codimension $2$ in $X$ (see Corollary \ref{cor_sigma}). Let
	\begin{equation}
		\kappa : \Sigma_{Z/S'} \hookrightarrow X
	\end{equation}
	the obvious inclusion.
	Then the restriction of $\pi'$ to $\rho^{-1}(\kappa(\Sigma_{Z/S'}))$ is the covering of degree $2k$, see (\ref{eq_div_delta_dec}). 
	By shrinking the base, we may suppose that it is a trivial cover, so there are holomoprhic sections $\sigma'_{m+1}, \ldots, \sigma'_{m + 2k} : S' \to Y$ such that $\rho^{-1}(\Sigma_{Z/S'}) = \cup_{i = 1}^{2k} \Im(\sigma'_{m+i})$ and $\rho \circ \sigma'_{m+2i - 1} = \rho \circ \sigma'_{m+2i}$, $i = 1, \ldots, k$.
	We define the divisor $D_{Y/S'}$ over $Y$ by
	\begin{equation}
		D_{Y/S'} : = \Im(\sigma_1') + \cdots  + \Im(\sigma'_{m + 2k}).
	\end{equation}
	We also define the \textit{twisted canonical line bundle} of $\pi'$ as follows
	\begin{equation}
		\omega_{Y/S'}(D) := \omega_{Y/S'} \otimes \mathscr{O}_{Y}(D_{Y/S'}).
	\end{equation}
	 Then, classically (cf. Section \ref{sect_recall_relantors}), we have the canonical isomorphism
	 \begin{equation}\label{isom_pull_back_twisted}
	 	\rho^* (\omega_{X/S}(D)) \simeq \omega_{Y/S'}(D).
	 \end{equation}
	 Under assumptions (\ref{suppos_s3}), the isomorphism (\ref{isom_pull_back_twisted}) induces the Hermitian norm $\norm{\cdot}_{Y/S'}$ on $\omega_{Y/S'}(D)$ over $Y \setminus |D_{Y/S'}|$ by 
	 \begin{equation}
	 	\norm{\cdot}_{Y/S'} := \rho^*(\, \norm{\cdot}_{X/S}).
	 \end{equation}
	 Let $\norm{\cdot}_{Y/S'}^{\omega}$ be the norm on $\omega_{Y/S'}$, which is induced by $\norm{\cdot}_{Y/S'}$ as in Construction \ref{const_norm_div}. 
	 Then $\norm{\cdot}_{Y/S'}$ and $\norm{\cdot}_{Y/S'}^{\omega}$ are Hermitian norms over $Y \setminus ( (\pi')^{-1}(|\Delta'|) \cup | D_{Y/S'} | )$.
	 \par 
	 \noindent
	\begin{align}\label{suppos_restr}
		\begin{split}
			& \qquad \textit{We suppose that the norm $\norm{\cdot}_{Y/S'}^{\omega}$ over $Y \setminus (\pi^{-1}(|\Delta'|) \cup |D_{Y/S'}|)$ is such that its re-}
			\\
			&\textit{striction over each nonsingular fiber $Y_t := \pi^{-1}(t)$, $t \in S' \setminus |\Delta'|$ of $\pi'$ induces the Kähler}
			\\
			&\textit{metric $g^{TY_t}$, for which the triple $(Y_t, \{ \sigma_1'(t), \ldots, \sigma_{m+2k}'(t) \}, g^{TY_t})$ is a surface with cusps.}
		\end{split}
	\end{align}
	\par Essentially the assumption (\ref{suppos_restr}) says that the cusps on the normalization of the singular fibers are produced either by the extension of the existing cusps or by degeneration.
	\par We denote by $\norm{\cdot}_{Y/S'}^{W}$ the Wolpert norm on $\otimes_{i = 1}^{m + 2k} (\sigma'_i)^* \omega_{Y/S'}$, induced by $\norm{\cdot}_{Y/S'}^{\omega}$ (which is well-defined by the assumption \ref{suppos_restr}).
	Now, similarly to (\ref{quil_wol_norm}), (\ref{det_wol_prod}), we define the norm 
		\begin{multline}\label{quil_wol_norm_rest}
			\norm{\cdot}_{\mathscr{L}'_n} := \big( \norm{\cdot}_Q (g^{TY_t}, \rho^*(h^{\xi}) \otimes \, \norm{\cdot}_{Y/S'}^{2n}) \big)^{12} \otimes \big( \norm{\cdot}^W_{Y/S'} \big)^{-\rk{\xi}} \\
			\otimes \big( \norm{\cdot}^{\rm{div}}_{\Delta'} \big)^{\rk{\xi}}
			\otimes \big( \otimes_{i=1}^{m+2k} (\sigma_i' \circ \rho)^* h^{\det \xi} \big)^3
		\end{multline}				
		on the line bundle
		\begin{multline}\label{det_wol_prod_rest}
			\mathscr{L}'_n :=  \lambda \big(j^* ( \rho^*(\xi)  \otimes \omega_{Y/S'}(D)^n) \big)^{12} \otimes 
			(\otimes_{i = 1}^{m + 2k} (\sigma'_i)^* \omega_{Y/S'} )^{-\rk{\xi}} 
			\\
			\otimes \mathscr{O}_{S'}(\Delta')^{\rk{\xi}}
			\otimes \big(\otimes_{i=1}^{m+2k} ( \sigma_i' \circ \rho)^* \det \xi \big)^6.
		\end{multline}
		\par 
		We will now relate the restriction of $\mathscr{L}_n$ to $S'$ with $\mathscr{L}'_n$.
		For this, we denote by $N_{\Sigma_{Z/S'}/X}$ (resp. $N_{S'/S}$) the normal vector bundle  of $\Sigma_{Z/S'}$ in $X$ (resp. of $S'$ in $S$).
		Since the fibers of $X$ have only double-point singularities, the projection $\pi$ induces the canonical isomorphism (see (\ref{eq_pr_sing}), cf. also \cite[(2.9)]{BisDegQuil})
		\begin{equation}\label{isom_normal}
			d \pi^2 : \wedge^2 (N_{\Sigma_{Z/S'} / X}) \otimes ( \det \rho_* ( \mathscr{O}_{\rho^{-1} \Sigma_{Z/S'}})) \to \kappa^* \pi^* N_{S'/S}.
		\end{equation}
		Also, for the relative tangent bundle $TY/S'$ of $\pi'$ and for any $i = 1, \ldots, k$, the normalization map $\rho$ induces the canonical isomorphism
		\begin{equation}\label{isom_pt_rho}
			(\sigma'_{m + 2i - 1})^* (TY/S') \otimes (\sigma'_{m + 2i})^* (TY/S')  \to \wedge^2 (N_{\Sigma_{Z/S'} / X}).
		\end{equation}
		We denote by $\omega_S$ and $\omega_{S'}$ the canonical line bundles over $S$ and $S'$.
		By combining the duals of the isomorphisms (\ref{isom_normal}), (\ref{isom_pt_rho}), for $i = 1, \ldots, k$, we get the canonical isomorphism
		\begin{equation}\label{isom_pi2}
			(\omega_S \otimes \omega_{S'}^{-1})|_{S'} \to (\sigma_{m + 2i - 1}')^* (\omega_{Y/S'}) \otimes (\sigma_{m + 2i}')^* (\omega_{Y/S'}).
		\end{equation}
		The following isomorphism is given by Poincaré residue morphism (cf. \cite[p. 147]{GrifHar})
		\begin{equation}\label{isom_conj}
			(\omega_S^{k} \otimes \mathscr{O}_S(k \Delta_0))|_{S'} \to \omega_{S'}^{k}.
		\end{equation}
		By combining the isomoprhism (\ref{isom_pi2}), applied for each $i = 1, \ldots, k$, the isomorphism (\ref{isom_conj}) and by multiplying by $(\otimes_{i = 1}^{m} \sigma_i^{*}\omega_{X/S} )^{-1} \otimes \mathscr{O}_S(\sum k_i \Delta_i)$, we get the canonical isomorphism
		\begin{equation}\label{isom_rest_w_div}
			\Big( \big(\otimes_{i = 1}^{m} \sigma_i^{*}\omega_{X/S} \big)^{-1} \otimes \mathscr{O}_S(\Delta) \Big) \big|_{S'}
			\to
			\big(\otimes_{i = 1}^{m + 2k} (\sigma'_i)^* \omega_{Y/S'} \big)^{-1} \otimes \mathscr{O}_{S'}(\Delta').
		\end{equation}
		\par For $t \in S'$, we have the following exact sequence of sheaves (cf. \cite[(5.53)]{BisDegQuil})
		\begin{multline}\label{eq_sh_ex_seq}
			0 
			\rightarrow 
			\mathscr{O}_{Z_t} \big( j^*(\xi \otimes \omega_{X/S}(D)^n) \big) 
			\rightarrow 
			\rho_* \mathscr{O}_{Y_t} \big( j^*(\rho^*(\xi) \otimes \omega_{Y/S'}(D)^n) \big) 
			\\
			\rightarrow 
			\mathscr{O}_{\Sigma_{Z/S'}} \big( \kappa^* \xi \otimes \det( \rho_* \mathscr{O}_{\rho^{-1} \Sigma_{Z/S'}}) \big)
			\rightarrow 
			0,
		\end{multline}
		where the first map is induced by the pull-back and (\ref{isom_pull_back_twisted}), and the second map is the difference of residue morphism at $\rho^{-1}(\Sigma_{Z/S'})$.
		The short exact sequence (\ref{eq_sh_ex_seq}) of the line bundles over $S'$ induces the canonical isomorphism (cf. \cite[(5.55)]{BisDegQuil})
		\begin{multline}\label{isom_det_restr}
			\lambda \big( j^* (\xi  \otimes \omega_{X/S}(D)^n) \big)|_{S'}
			\to 
			\lambda \big(j^* ( \rho^*(\xi)  \otimes \omega_{Y/S'}(D)^n) \big)
			\\
			\otimes
			\det \big( \pi_*(\kappa^*(\xi)) \big)
			\otimes
			\det \big((\pi \circ \rho)_* \mathscr{O}_{\rho^{-1} \Sigma_{Z/S'}} \big)^{\rk{\xi}}.
		\end{multline}
		We note that the square of $\det ((\pi \circ \rho)_* \mathscr{O}_{\rho^{-1} \Sigma_{Z/S'}})$ is canonically trivialized, so from now on, we don't mention those powers explicitly.
		Trivially, we have an isomorphism
		\begin{equation}\label{isom_kappa_sigma}
			\det \big( \pi_*(\kappa^*(\xi)) \big)^2 \to 
			\big( \otimes_{i = 1}^{2k} (\sigma_{m + i}' \circ \rho)^* \det \xi \big)
			\otimes 
			\big( \det \pi_* \mathscr{O}_{\Sigma_{Z/S'}} \big)^{2 \cdot \rk{\xi}}.
		\end{equation}
		The composition of the isomorphisms (\ref{isom_rest_w_div}), (\ref{isom_det_restr}) and (\ref{isom_kappa_sigma}) induce the canonical isomorphism
		\begin{equation}\label{isom_main}
			\mathscr{L}_n|_{S'} \to \mathscr{L}'_n 
			\otimes 
			\big( \det \pi_* \mathscr{O}_{\Sigma_{Z/S'}} \big)^{12 \cdot \rk{\xi}},
		\end{equation}
		which is the protagonist of this paper.
		\par For $k \in \nat^*$, we define
		\begin{equation}\label{defn_C_k}
			C_0 = -6\log(\pi), \qquad
			C_k = -6(1+k) \log(2) - 6(1 + 2k) \log(\pi) - 6\log((2k)!).
		\end{equation}
		\par 
		Now we can state the main result of this article, which describes the continuous extension of the norm (\ref{quil_wol_norm}) in terms of the same objects that we've used in the definition of (\ref{quil_wol_norm}).
		\begin{thm}[Restriction theorem]\label{thm_imm_thm}
			Let $\pi: X \to S$ be a proper, holomorphic, surjective map of complex manifolds, such that for every $t \in S$, the space $X_t := \pi^{-1}(t)$ is a complex curve whose singularities are at most ordinary double points. 
			We suppose that the divisor of singular curves $\Delta$ decomposes as in (\ref{eq_div_delta_dec}). We use the notation as in (\ref{eq_defn_delta_prime}) for $k \in \nat$ and $S'$.
			\par 
			Let $\sigma_1, \ldots, \sigma_m : S \to X$ be disjoint holomorphic sections of $\pi$, which do not pass through singular points of the fibers.
			We denote by $D_{X/S}$ the divisor (\ref{defn_dxs}).
			\par Let $\norm{\cdot}_{X/S}^{\omega}$ be a Hermitian norm on the canonical line bundle $\omega_{X/S}$ (see (\ref{eq_rel_can_defn})) over $X \setminus (\pi^{-1}(|\Delta|) \cup |D_{X/S}|)$ such that its restriction over each $X_t := \pi^{-1}(t)$, $t \in S \setminus |\Delta|$ induces the Kähler metric $g^{TX_t}$ on $X_t \setminus \{ \sigma_1(t), \ldots, \sigma_m(t) \}$ such that the triple $(X_t, \{ \sigma_1(t), \ldots, \sigma_m(t) \}, g^{TX_t})$ is a surface with cusps in the sense of \cite{FinII1}, \cite{MullerCusp}, \cite{FreixasARR} (see Section \ref{sect_recall_relantors}).  
			\par 	Let $(\xi, h^{\xi})$ be a holomorphic Hermitian vector bundle over $X$.
			We define $\norm{\cdot}_{\mathscr{L}_n}$, $\mathscr{L}_n$ as in (\ref{quil_wol_norm}) and (\ref{det_wol_prod}).
			Let let family of complex curves $\pi' : Y \to S'$ be constructed as in (\ref{eq_sh_exct_seq}). 
			We suppose that assumptions (\ref{suppos_s3}) and (\ref{suppos_restr}) hold.
			We define $\norm{\cdot}_{\mathscr{L}'_n}$, $\mathscr{L}'_n$ as in (\ref{quil_wol_norm_rest}) and (\ref{det_wol_prod_rest}).
			\par Then the norm $\norm{\cdot}_{\mathscr{L}_n}$ extends continuously over $S$ and under the isomorphism (\ref{isom_main}), we have
			\begin{equation}
					\norm{\cdot}_{\mathscr{L}_n}|_{S'} = \exp(k \cdot \rk{\xi} \cdot C_{-n}) \cdot \norm{\cdot}_{\mathscr{L}'_n}.
			\end{equation}
		\end{thm}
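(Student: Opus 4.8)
The plan is to reduce the global statement to a local computation near the singular fibers, exploiting the naturality of the Quillen metric under the isomorphisms already assembled in the introduction. First I would invoke Theorem \ref{thm_cont} (the continuity result from \cite{FinII2}) to know that $\norm{\cdot}_{\mathscr{L}_n}$ extends continuously over $S$; this is given, so the content is to identify the value of the extension on $S'$. The strategy is to compare the two sides fiberwise along $S' \setminus |\Delta'|$ is not enough --- instead one wants an \emph{exact} identity of norms on $S'$ --- so the natural approach is a curvature-plus-constant argument: show that the ratio $\norm{\cdot}_{\mathscr{L}_n}|_{S'} / \norm{\cdot}_{\mathscr{L}'_n}$, which under (\ref{isom_main}) is a norm on $(\det \pi_* \mathscr{O}_{\Sigma_{Z/S'}})^{12 \cdot \rk{\xi}}$, is a \emph{constant} multiple of the canonical norm, and then pin the constant down to $\exp(k \cdot \rk{\xi} \cdot C_{-n})$.

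The key steps, in order, would be as follows. First I would establish that the two Quillen norms are related by the curvature of the short exact sequence (\ref{eq_sh_ex_seq}) via an anomaly/Bott-Chern-type formula, so that the comparison of curvatures of $\norm{\cdot}_{\mathscr{L}_n}$ and $\norm{\cdot}_{\mathscr{L}'_n}$ reduces to a contribution supported on $\Sigma_{Z/S'}$; the Wolpert norm factors $\norm{\cdot}^W_{X/S}$ versus $\norm{\cdot}^W_{Y/S'}$ and the divisor norm factors $\norm{\cdot}^{\rm{div}}_{\Delta}$ versus $\norm{\cdot}^{\rm{div}}_{\Delta'}$ are designed precisely so that these geometric contributions cancel, leaving only a universal numerical constant coming from the normalization of the heat kernel near the node. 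Second, because both sides are built to have the \emph{same} curvature (this is the whole point of inserting the Wolpert and divisor factors, and is what makes the ratio a constant rather than a varying function), I would argue that the ratio is locally constant, hence constant after shrinking $S$. Third --- and this is where the number $C_{-n}$ enters --- I would compute the constant by specializing to the simplest possible degenerating family, e.g. a standard nodal degeneration $xy = t$ of a fixed curve with a trivial or rank-one bundle $\xi$ and explicit cusp metric, where every term in $\norm{\cdot}_{\mathscr{L}_n}$ and $\norm{\cdot}_{\mathscr{L}'_n}$ can be evaluated directly and compared.

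For that model computation I would use the known asymptotics of the Quillen metric under pinching of a geodesic (the degeneration formula, which underlies the continuity theorem), together with the explicit residue and Poincaré-residue isomorphisms (\ref{isom_pi2}), (\ref{isom_conj}) that define (\ref{isom_main}); the factor $(2k)!$, the powers of $2$ and of $\pi$ in (\ref{defn_C_k}) should emerge from the normalization of the $L^2$-metric in (\ref{defn_L_2}) (note the $1/2\pi$), from the $2k$-fold branching of $\pi'$ over $\Sigma_{Z/S'}$, and from the explicit Mellin-transform constants in the definition of the analytic torsion with the universal cusp subtraction from \cite{FinII1}. The dependence on $-n$ (equivalently on the twist by $\omega_{X/S}(D)^n$) tracks how many residue contributions appear, i.e. the rank of the jet space being compared across the node.

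The main obstacle I anticipate is the third step: isolating the universal constant. The curvature cancellation (steps one and two) is structurally forced by the construction of $\mathscr{L}_n$ and follows the template of \cite{BisDegQuil}, but extracting the \emph{exact} value $C_{-n}$ --- rather than merely proving existence of \emph{some} constant --- requires a delicate bookkeeping of every analytic normalization: the heat-kernel regularization near the cusp, the $2\pi$-normalization of the $L^2$-product, the comparison between the Quillen metric of the singular curve's continuous extension and that of its normalization with \emph{added} marked points, and the precise constants in the Poincaré residue. I expect this to demand a careful, self-contained evaluation on a model family, and reconciling the several $\log(2\pi)$-type constants scattered through the definitions is where the real work will lie.
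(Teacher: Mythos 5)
Your step two contains a genuine logical gap. You argue that because $\norm{\cdot}_{\mathscr{L}_n}|_{S'}$ and $\norm{\cdot}_{\mathscr{L}'_n}$ have the same curvature, their ratio is locally constant. Equality of curvatures only makes the logarithm of the ratio pluriharmonic, and a pluriharmonic function on $S'$ need not be constant; some extra input (boundedness on a compactified moduli space, functoriality under base change, or a pointwise local computation) is required. Worse, the curvature of the \emph{continuous extension} of $\norm{\cdot}_{\mathscr{L}_n}$ restricted to $S'$ is not available to you: the curvature theorem of \cite{FinII2} applies over $S \setminus |\Delta|$, which is disjoint from $S'$, and a priori the extension is merely continuous there. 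The paper obtains constancy and universality by an entirely different mechanism: locality. One constructs a model metric $g^{TX_t}_{\varkappa}$ that near each node is the explicit hyperbolic-cylinder metric (\ref{defn_g_cyl1}) and away from the nodes agrees with a smooth metric $g^{TX}_{\sim}$ on the total space; then one applies an extension of Bismut's degeneration theorem \cite[Theorem 0.3]{BisDegQuil} to non-K\"ahler compatible metrics (Theorem \ref{thm_bismut_rest}), plus the anomaly formula and Proposition \ref{prop_int_loglog}, to get a pointwise limit whose correction constant is manifestly independent of the family because every ingredient entering it is computed in the universal coordinates $z_0 z_1 = t$ near the node. Constancy on $S'$ is then automatic, with no curvature argument needed.

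Your step three underestimates what it asks for, and this is the second gap. Evaluating the exact constant by a ``careful, self-contained evaluation on a model family'' with cusped metrics means computing the precise limit of the regularized analytic torsion (with the universal cusp subtraction) under pinching --- this is essentially the content of the theorems of Freixas \cite{FreixasARR} (building on Wolpert and Takhtajan--Zograf), not a routine bookkeeping exercise; the factors $(2k)!$ and the powers of $2$ and $\pi$ in (\ref{defn_C_k}) are exactly what that work produces. The paper deliberately avoids redoing this: it first proves existence of \emph{some} universal constant $A_{-n}$ (Theorem \ref{thm_imm_thm_upto_const}, for $m=0$), reduces the case of general $m$ to $m=0$ by trivializing and then flattening the cusps (Theorems \ref{thm_anomaly_cusp} and \ref{thm_comp_appr} --- a reduction your proposal does not address at all), and finally pins $A_{-n} = C_{-n}$ by comparing its restriction theorem with Freixas's Theorem \ref{thm_tz_imm_thm} on the canonical plumbing family, using Phong--D'Hoker's identity for compact hyperbolic surfaces and the $3$-punctured-sphere normalization of the torsion. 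This bootstrap simultaneously yields Theorem \ref{thm_compat}, a by-product your route would not give. As written, your proposal defers precisely the two hardest points (constancy of the ratio and the exact value of the constant) to arguments that either fail or amount to re-proving the cited degeneration results.
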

		\begin{rem}
			We note that a similar theorem was proved by Bismut in \cite[Theorems 0.2, 0.3]{BisDegQuil} (cf Theorem \ref{thm_bismut_rest}). Let's comment on the differences between Theorem \ref{thm_imm_thm} and \cite[Theorems 0.2, 0.3]{BisDegQuil}. 
			\par First of all, Theorem \ref{thm_imm_thm} is for codimension $1$ family, where fibers are endowed with metric with cusps singularities, and the result \cite[Theorems 0.2]{BisDegQuil} works for families of any codimension but with smooth metric.
			Also, the way we endow singular fibers with the metric is crucially different. 
			In our case even when the general fiber has no cusps, the metric on the normalization of the singular fiber acquires at least two cusps. This is different from \cite[Theorems 0.2, 0.3]{BisDegQuil}, where author induce smooth metric on the normalization of the singular fiber. In particular, Theorem \ref{thm_imm_thm} doesn't follow directly from \cite[Theorems 0.2, 0.3]{BisDegQuil} and anomaly formula.
		\end{rem}
		\par Now, let's describe our second result. We fix a compact Riemann surface $\overline{M}$ and a set of points $D_M \subset \overline{M}$, $\#D_M = m$, $m < + \infty$. We denote $M := \overline{M} \setminus D_M$. Suppose that a pointed Riemann surface $(\overline{M}, D_M)$ is stable, i.e. the genus $g(\overline{M})$ of $\overline{M}$ satisfies 
	\begin{equation}\label{cond_stable}
		2 g(\overline{M}) - 2 + m > 0, 
	\end{equation}
	 then, by the uniformization theorem (cf. \cite[Chapter IV]{FarKra}, \cite[Lemma 6.2]{Auvr}), there is the \textit{canonical hyperbolic metric} $g^{TM}_{\rm{hyp}}$ of constant scalar curvature $-1$ on $M$ with cusps at $D_M$. 
	 We denote by $\norm{\cdot}_{M}^{\rm{hyp}}$ the norm induced by $g^{TM}_{\rm{hyp}}$ on $\omega_M(D)$ over $M$. 
	 Then, as we explain in \cite[\S 2.1]{FinII1}, the triple $(\overline{M}, D_M, g^{TM}_{\rm{hyp}})$ is a surface with cusps (see Section \ref{sect_recall_relantors}), thus, the analytic torsion $T(g^{TM}_{\rm{hyp}}, (\, \norm{\cdot}_{M}^{\rm{hyp}})^{2n})$ is well-defined in this case.
	 \par Alternatively, we denote by $Z_{(\overline{M}, D_M)}(s), s \in \comp$ the Selberg zeta-function, which is given for $\Re(s) > 1$ by the absolutely converging product:
	\begin{equation}\label{defn_sel_zeta}
		Z_{(\overline{M}, D_M)}(s) = \prod_{\gamma} \prod_{k=0}^{\infty} (1 - e^{-(s + k)l(\gamma)})^{2},
	\end{equation}
	where $\gamma$ runs over the set of all simple non-oriented closed geodesics on ($M, g^{TM}_{\rm{hyp}})$, and $l(\gamma)$ is the length of $\gamma$.
	The function $Z_{(\overline{M}, D_M)}(s)$ admits a meromorphic extension to the whole complex $s$-plane with a simple zero at $s = 1$ (see for example \cite[(5.3)]{PhongHook}).
	\par 
	Let $\zeta(s) := \sum_{k=1}^{\infty} k^{-s}$ be the Riemann zeta function. For $k \in \nat^*$, we put
	\begin{equation}\label{defn_c_k_coeff}
	\begin{aligned}
		& \textstyle c_0 = 4 \zeta'(-1) - \frac{1}{2} + \log(2 \pi),
		\\
		&
		\textstyle c_k = \sum_{l = 0}^{k - 1} (2k - 2l - 1) \big( \log (2k + 2kl - l^2 - l) - \log (2) \big) + 
		(\frac{1}{3} + k + k^2 ) \log(2) 
		\\		
		& \qquad \qquad \quad
		\textstyle + (2k + 1)\log (2 \pi) + 4 \zeta'(-1) 
		- 2(k + \frac{1}{2})^2 
		- 4 \sum_{l = 1}^{k - 1} \log(l!) - 2 \log (k!).
	\end{aligned}
	\end{equation}
	For $k \in \nat$, we denote by $B_k : \nat^2 \to \real$, $E : \nat^2 \to \real$ the following functions
	\begin{equation}
	\begin{aligned}
		& B_k(g, m) =  \exp \Big(  (2 - 2g( \overline{M}) - m)\frac{c_k}{2} \Big),
		\\
		& E(g, m) = \exp \Big( (g( \overline{M}) + 2 - m) \frac{\log(2)}{3} \Big).
	\end{aligned}
	\end{equation}
	In particular, we see that for any $k \in \nat$, $(g, m) \in \nat^2$, we have
	\begin{equation}\label{eq_b_k_dep_g_m_11}
	\begin{aligned}
		& B_k(g + m, 0) = B_k(g, m) \cdot B_k(1, 1)^m,
		\\
		& E(g + m, 0) = E(g, m) \cdot E(1, 1)^m.
	\end{aligned}
	\end{equation}
	\par Then, in case for hyperbolic surfaces and $(\xi, h^{\xi})$ trivial, for $l \in \integ$, $l < 0$, Takhtajan-Zograf in \cite[(6)]{TakZog} proposed\footnote{The constant in front of Selberg zeta function didn't appear in \cite{TakZog}, as their result is independent of it.} the analogue of the analytic torsion defined via Selbrerg zeta function as
	\begin{equation}\label{eqn_sel_norm}
	\begin{aligned}
		&
		T_{TZ}(g^{TM}_{\rm{hyp}}, 1) 
		= 
		E(g(\overline{M}), m) \cdot B_0 (g(\overline{M}), m) \cdot Z_{(\overline{M}, D_M)}'(1),
		\\
		&
		T_{TZ}(g^{TM}_{\rm{hyp}}, (\, \norm{\cdot}_{M}^{\rm{hyp}})^{2l})
		=
		B_{-l} (g(\overline{M}), m) \cdot Z_{(\overline{M}, D_M)}(-l + 1).
	\end{aligned}
	\end{equation}
	\begin{thm}[Compatibility theorem]\label{thm_compat}
		For any surface with cusps $(\overline{M}, D_M, g^{TM}_{\rm{hyp}})$, for which $g^{TM}_{\rm{hyp}}$ has constant scalar curvature $-1$, the following identity holds
		\begin{equation}\label{eq_thm_compat}
			T(g^{TM}_{\rm{hyp}}, (\, \norm{\cdot}_{M}^{\rm{hyp}})^{2n}) 
			= 
			T_{TZ}(g^{TM}_{\rm{hyp}}, (\, \norm{\cdot}_{M}^{\rm{hyp}})^{2n}).
		\end{equation}
		Thus, our definition of the analytic torsion is compatible with the definition of Takhtajan-Zograf.
	\end{thm}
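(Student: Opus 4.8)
Both torsions in \eqref{eq_thm_compat} are the ones for trivial coefficients, so throughout I take $\xi$ trivial and write $M = \Gamma \backslash \hh$ for the uniformizing Fuchsian group $\Gamma$, with the $2\pi$-normalization fixed in the footnote to \eqref{defn_L_2}. By \cite[Definition 2.16]{FinII1} and Section \ref{sect_recall_relantors}, the analytic torsion $T(g^{TM}_{\rm{hyp}}, (\norm{\cdot}_{M}^{\rm{hyp}})^{2n})$ is, in the normalization of \cite{FinII1}, the zeta-regularized determinant $\det \Box_n$ of the $\dbar$-Laplacian acting on $L^2$-sections of $\omega_M(D)^n$ for the complete hyperbolic metric, with the universal per-cusp contribution subtracted from the heat trace before its Mellin transform is taken. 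The plan is to compute this determinant through the Selberg trace formula for $\Box_n$ and to verify that the outcome is exactly the right-hand side of \eqref{eqn_sel_norm}, thereby pinning down the prefactors $B_{-n}(g(\overline{M}), m)$ and $E(g(\overline{M}), m)$ left undetermined in \cite{TakZog}.

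First I would identify $\Box_n$ with a Maass Laplacian of weight $2n$ on $\Gamma \backslash \hh$, so that its discrete and continuous spectra are governed by the Selberg trace formula. In that formula the primitive hyperbolic conjugacy classes of $\Gamma$ correspond to the simple closed geodesics indexing \eqref{defn_sel_zeta}, and the hyperbolic orbital integral assembles precisely into the logarithmic derivative $Z'_{(\overline{M}, D_M)}/Z_{(\overline{M}, D_M)}$. Taking the Mellin transform of the regularized heat trace and integrating then yields a Hadamard-type identity of the schematic form
\[ \log \det \Box_n = \log Z_{(\overline{M}, D_M)}(1 - n) + I_{\mathrm{id}}(n) \cdot \mathrm{Area}(M) + I_{\mathrm{par}}(n), \]
valid for $n < 0$; for $n = 0$ the constants lie in $\ker \Box_0$, so one excises the zero mode and the simple zero of $Z_{(\overline{M}, D_M)}$ at $s = 1$ turns the value $Z(1)$ into $Z'(1)$, which accounts for the appearance of $Z'(1)$ in the first line of \eqref{eqn_sel_norm}.

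It then remains to evaluate $I_{\mathrm{id}}(n)$ and $I_{\mathrm{par}}(n)$ and to match them against \eqref{defn_c_k_coeff}. By Gauss--Bonnet $\mathrm{Area}(M) = 2\pi(2g(\overline{M}) - 2 + m) = -2\pi(2 - 2g(\overline{M}) - m)$, so the identity contribution automatically takes the form $\exp\!\big((2 - 2g(\overline{M}) - m)\, c_{-n}/2\big) = B_{-n}(g(\overline{M}), m)$ as soon as $c_{-n}$ is identified with the Plancherel integral defining $I_{\mathrm{id}}$; evaluating that integral against the weight-$2n$ spectral density produces the $4\zeta'(-1)$, $\log(2\pi)$ and polynomial-in-$n$ terms of $c_{-n}$. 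The combinatorial sums $\sum_{l}(2k - 2l - 1)(\log(2k + 2kl - l^2 - l) - \log 2)$ and $\sum_{l}\log(l!)$ arise from the finite family of discrete-series resonances of $\Box_n$ at $s = 1, \dots, |n|$ --- equivalently, from the holomorphic sections measuring the difference between $\det \Box_n$ and $Z(1 - n)$ --- which I would compute from the corresponding values of the Gamma (Barnes $G$-) function. The extra factor $E(g(\overline{M}), m) = \exp\!\big((g(\overline{M}) + 2 - m)\tfrac{\log 2}{3}\big)$ appears only for $n = 0$: I would trace it to the zero-mode (volume) normalization together with the $\log 2$ built into the Poincaré-compatible cusp coordinates, which is also the source of the $-\tfrac12$ and $\log(2\pi)$ in $c_0$.

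The principal obstacle is the parabolic term $I_{\mathrm{par}}(n)$ and the attendant matching of two regularizations of the cusp contribution: the Selberg trace formula produces a cusp term built from the scattering determinant together with a logarithmically divergent piece of the continuous spectrum, whereas the torsion of \cite[Definition 2.16]{FinII1} is defined by subtracting a fixed universal per-cusp heat-kernel constant. The heart of the proof is to show that these two prescriptions agree up to the explicit constants recorded in $c_{-n}$ and $E$; concretely, I would check that the heat-kernel cusp contribution subtracted in \cite{FinII1} reproduces exactly the parabolic orbital integral plus the continuous-spectrum correction of the trace formula, leaving only the stated universal constants. As an independent check on the final constants I would confront the resulting formula with the degeneration law of Theorem \ref{thm_imm_thm} (constants $C_{-n}$ of \eqref{defn_C_k}): under pinching a simple geodesic the classical asymptotics of $Z_{(\overline{M}, D_M)}$ must reproduce exactly the jump predicted by the isomorphism \eqref{isom_main}, which fixes any residual additive ambiguity in $c_{-n}$.
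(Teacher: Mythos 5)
Your approach is genuinely different from the paper's, but it has a real gap at its core. For context: the paper never invokes the Selberg trace formula for a general surface. It proves Theorem \ref{thm_compat} as a by-product of the restriction theorem: it first establishes Theorem \ref{thm_imm_thm_upto_const} (restriction with an \emph{undetermined} universal constant $A_{-n}$, obtained from Bismut's degeneration theorem), then compares it with Freixas' parallel degeneration theorem (Theorem \ref{thm_tz_imm_thm}, whose constant is the explicit $C_{-n}$) along the canonical plumbing families of the one-pointed torus and of a general $(\overline{M}, D_M)$, using the Phong--D'Hoker identity \eqref{eq_pdh_restatement} (the compact case $m=0$) on the smooth fibers; this yields \eqref{eq_final_pf_imm1}, and evaluation at the three-punctured sphere --- where $T = T_{TZ}$ holds by the very normalization \eqref{eq_defn_rel_tor} --- forces $A_{-n} = C_{-n}$ and hence \eqref{eq_thm_compat}. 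No spectral computation on a general cusped surface is ever performed.

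The gap in your plan is that the step you yourself call ``the heart of the proof'' --- that the per-cusp heat-trace subtraction of \cite{FinII1} reproduces exactly the parabolic orbital integral plus the continuous-spectrum correction of the trace formula, leaving only universal constants --- is asserted, not carried out, and all of the difficulty of the theorem is concentrated there. Your schematic identity $\log \det \laplcomp_n = \log Z_{(\overline{M},D_M)}(1-n) + I_{\mathrm{id}}(n)\cdot \mathrm{Area}(M) + I_{\mathrm{par}}(n)$ presupposes that the cusp contribution $I_{\mathrm{par}}(n)$ depends only on $n$ and $m$; but the continuous-spectrum term of the trace formula is governed by the scattering determinant of the uniformizing group, which depends on the surface, and in the relative regularization against the model $P$ these terms do not visibly cancel, since the scattering determinants of $M$ and $P$ differ. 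Showing that this non-universal contribution is exactly compensated in the regularized zeta function \eqref{eq_defn_rel_zeta}, and then extracting the precise constants \eqref{defn_c_k_coeff}, is precisely the hard point: Albin--Rochon, by a direct analysis, obtained \eqref{eq_thm_compat} only up to an unknown universal constant (Remark \ref{rem_thm_compat}). Note also that the definition \eqref{eq_defn_rel_tor} contains, beyond the heat-trace subtraction, the multiplicative factor $T_{TZ}(g^{TP}, \norm{\cdot}_P^{2n})^{m \cdot \rk{\xi}/3}$, which your bookkeeping omits; it is exactly this factor that makes the three-punctured-sphere case tautological and that the paper exploits. Finally, your closing ``independent check'' against Theorem \ref{thm_imm_thm} cannot serve as the source of the constants within this paper's logic, since $C_{-n}$ there is itself pinned down using Freixas' theorem together with the compatibility statement you are trying to prove.
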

	 \begin{rem}\label{rem_thm_compat}
	 	For $m=0$, i.e. when surfaces have no cusps, Theorem \ref{thm_compat} was shown by Phong-D'Hoker \cite[(7.30)]{PhongHook}, \cite[(3.6)]{PhongHook2nd} (see also \cite{sarnakDet}, \cite[(50)]{BolStei} and \cite[(9)]{Oshima}). Our proof is based on their result. We note that Albin-Rochon in \cite{AlbRoch} proved (\ref{eq_thm_compat}) up to a universal constant (see also \cite[(2.43)]{FinII2}). 
	 	Our approach to (\ref{eq_thm_compat}) is  based on degenerating families, which is different from the one of Albin-Rochon.
	\end{rem}
	Now let's describe the applications of Theorems \ref{thm_imm_thm}, \ref{thm_compat} in the study of the moduli space $\modul_{g, m}$ of $m$-pointed Riemann surfaces of genus $g \in \nat$, $2g - 2 + m > 0$. We denote by $\modulcomp_{g, m}$ the \textit{Deligne-Mumford compactification} of $\modul_{g, m}$, by $\partial \modul_{g, m} := \modulcomp_{g, m} \setminus \modul_{g, m}$ the \textit{compactifying divisor}, by $\univcurv_{g, m}$ and $\univcurvcomp_{g, m}$ the universal curves over $\modul_{g, m}$ and $\modulcomp_{g, m}$ respectively. 
	We denote by $\Pi : \univcurvcomp_{g, m} \to \modulcomp_{g, m}$ the \textit{universal projection}. We denote by $D_{g, m}$ the divisor on $\univcurvcomp_{g, m}$, formed by $m$ fixed points. We denote by $\omega_{g, m}$ the relative canonical line bundle of $\Pi$, by $\otimes_{i = 1}^{m} \sigma_i^{*} \omega_{g, m}$ the determinant of the restriction of $\omega_{g, m}$ to the divisor $D_{g, m}$, and by $\omega_{g, m}(D)$ the twisted relative canonical line bundle,
	\begin{equation}\label{def_rel_can_modul}
		\omega_{g, m}(D) := \omega_{g, m} \otimes \mathscr{O}_{\univcurvcomp_{g, m}}(D_{g, m}).
	\end{equation}
	\par By the uniformization theorem (cf. \cite[Chapter IV]{FarKra}, \cite[Lemma 6.2]{Auvr}, \cite{AuvrMaArx}), we endow $\omega_{g, m}(D)$ with the Hermitian norm $\norm{\cdot}_{g, m}^{\rm{hyp}}$, such that its restriction over each fiber induces the canonical hyperbolic metric of constant scalar curvature $-1$ by Construction \ref{const_norm_div}. This endows the determinant line bundle $\lambda(j^*(\omega_{g, m}(D)^n))$, $n \leq 0$, (\ref{defn_det_line}), which is usually called the \textit{Hodge line bundle}, with the induced Quillen metric $\norm{\cdot}^{Q, n}_{g, m}$.
	We endow the line bundle $\otimes_{i = 1}^{m} \sigma_i^{*} \omega_{g, m}$ with the associated Wolpert norm $\norm{\cdot}^W_{g, m}$, see Wolpert \cite[Definition 1]{Wol07} (cf. \cite[Definition 1.5]{FinII1} or Definition \ref{defn_wolpert_norm}).
	\par We recall that we proved in {\cite[Corollary 1.11]{FinII2}} (cf. Theorem \ref{thm_cont}) that the norm 
		\begin{equation}\label{eq_renorm_hodge_norm}
			\norm{\cdot}^{H, n}_{g, m} :=
			(\, \norm{\cdot}^{Q, n}_{g, m})^{12} 
			\otimes
			(\, \norm{\cdot}^W_{g, m})^{-1} 
			\otimes 
			\, \norm{\cdot}_{\partial \modul_{g,m}}^{\rm{div}}
		\end{equation}
		on the line bundle
		\begin{equation}\label{eq_renorm_hodge}
			\lambda_{g, m}^{H, n} :=
			\lambda(j^*(\omega_{g, m}(D)^n))^{12} 
			\otimes 
			(\otimes \sigma_i^{*} \omega_{g, m})^{-1} 
			\otimes 
			\mathscr{O}_{\modulcomp_{g,m}}(\partial \modul_{g,m})
		\end{equation}
		extends continuously over $\modulcomp_{g, m}$. Also, the norm $\norm{\cdot}^{H, n}_{g, m}$ is smooth over $\modul_{g, m}$, as it can be seen, for example, from the curvature theorem (cf. \cite[Theorem D]{FinII2}) and the smoothness of the Weil-Petersson metric.
	\par 
	For the definition of the clutching morphisms
	\begin{equation}\label{eq_clutch_morph}
	\begin{aligned}
		& \alpha_{ij}: \modulcomp_{g-1, m+2} \to \modulcomp_{g, m}, \\
		& \beta_{(g_1, m_1), (g_2, m_2)}^{P}: \modulcomp_{g_1, m_1 + 1} \times \modulcomp_{g_2, m_2 + 1} \to \modulcomp_{g, m},
	\end{aligned}
	\end{equation}
	where $ i < j$, $i, j = 1, \ldots, m+2$; $m_1, m_2 \in \nat$, $g_1, g_2 \in \nat$, $m_1 + m_2 = m$,  $g_1 + g_2 = g$, $2g_1 + m_1 - 2 > 0$, $2g_2 + m_2 - 2 > 0$ and $P \in \{I, J \subset \{1, 2, \ldots, m\} : I \cap J = \emptyset, I \cup J =\{1, 2, \ldots, m\},  |I| = m_1, |J| = m_2 \}$, see Knudsen \cite{Knud2}.
	We recall that the compactifying divisor $\partial \modul_{g, m}$ can be described in terms of (\ref{eq_clutch_morph}) by (cf.  \cite[p.262]{Arb_v2})
	\begin{equation}
		\big| \partial \modul_{g, m} \big| = \Big( \cup \Im(\alpha_{ij}) \Big) \cup \Big( \cup \Im \big(\beta_{(g_1, m_1), (g_2, m_2)}^{P} \big) \Big).
	\end{equation}
	From now on and till the end of this article, for brevity, we drop the subscripts from $\alpha, \beta$.
	\par After an application of adjunction formula, which asserts the canonical triviality of the line bundle $\Pi_* ( \omega_{g,m}(D)|_{|D_{g, m}|} )$, the isomorphism (\ref{isom_main}) specifies in this case to the isomoprhisms
	\begin{align}\label{eq_alpha_isom}
		& \alpha^* \lambda_{g, m}^{H, n} \simeq \lambda_{g-1, m_2}^{H, n}, \\
		\label{eq_beta_isom}
		& \beta^* \lambda_{g, m}^{H, n} \simeq \lambda_{g_1, m_1 + 1}^{H, n} \boxtimes \lambda_{g_2, m_2 + 1}^{H, n},
	\end{align}
	which also remarkably respect the natural $\integ$-structure of the line bundles (\ref{eq_renorm_hodge}), coming from the arithmetic structure of the $\modulcomp_{g, m}$, as it was proved by Knudsen in \cite[Theorem 4.2]{Knud3} (cf. \cite{FreixasARR}).
	\begin{thm}[Restriction theorem on $\modulcomp_{g, m}$]\label{thm_imm_thm_modul}
		a) The isomorphism (\ref{eq_alpha_isom}) is an isometry if the left-hand side is endowed with $\norm{\cdot}^{H, n}_{g, m}$, and the right-hand side with $\exp(m \cdot C_{-n}) \cdot \norm{\cdot}^{H, n}_{g-1, m+2}$.
		\par b) Similarly, the isomorphism (\ref{eq_beta_isom}) is an isometry if the left-hand side is endowed with $\norm{\cdot}^{H, n}_{g, m}$, and the right-hand side is endowed with the norm $\exp(m \cdot C_{-n}) \cdot (\, \norm{\cdot}^{H, n}_{g_1, m_1 + 1} \boxtimes \norm{\cdot}^{H, n}_{g_2, m_2 + 1})$.
	\end{thm}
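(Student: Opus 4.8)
The plan is to deduce Theorem \ref{thm_imm_thm_modul} from the abstract Restriction theorem \ref{thm_imm_thm} by applying it to the universal family over a neighbourhood of the relevant boundary divisor, so I would begin by setting up the geometric dictionary. Take $X=\univcurvcomp_{g,m}$, $S=\modulcomp_{g,m}$, $\pi=\Pi$, and the trivial bundle $\xi=\mathscr{O}$ with its trivial metric, so that $\rk{\xi}=1$ and all the $\det\xi$–factors in (\ref{quil_wol_norm}), (\ref{det_wol_prod}) disappear, leaving exactly the Hodge norm (\ref{eq_renorm_hodge_norm}) on the line bundle (\ref{eq_renorm_hodge}); in other words $\norm{\cdot}_{\mathscr{L}_n}=\norm{\cdot}^{H,n}_{g,m}$. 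The discriminant $\Delta=\Pi_*(\Sigma_{X/S})$ is the reduced normal–crossings divisor $\partial\modul_{g,m}$, so locally around a point of $\Im(\alpha)$ (resp. $\Im(\beta)$) the relevant component $\Delta_0$ enters with multiplicity $k=1$. I would then identify $S'=|\Delta_0|$ with $\modulcomp_{g-1,m+2}$ through $\alpha$ (resp. with $\modulcomp_{g_1,m_1+1}\times\modulcomp_{g_2,m_2+1}$ through $\beta$), and check that the normalization $\rho:Y\to Z=\pi^{-1}(S')$ of the nodal fibres is precisely the universal curve over that smaller space, the $2k=2$ extra sections $\sigma'_{m+1},\sigma'_{m+2}$ being the two preimages of the node. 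Under this dictionary the abstract family $\pi':Y\to S'$ becomes the universal curve, its $m+2$ cusps become the $m+2$ marked points, and $\norm{\cdot}_{\mathscr{L}'_n}$ becomes $\norm{\cdot}^{H,n}_{g-1,m+2}$ (resp. the external tensor product of the two Hodge norms).

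Next I would verify the two hypotheses (\ref{suppos_s3}) and (\ref{suppos_restr}). Assumption (\ref{suppos_s3}) for the hyperbolic norm $\norm{\cdot}^{\rm{hyp}}_{g,m}$ is exactly the input already used in \cite[Corollary 1.11]{FinII2} to obtain the continuous extension of $\norm{\cdot}^{H,n}_{g,m}$ over $\modulcomp_{g,m}$, so it is available. The substance is assumption (\ref{suppos_restr}): I must show that the fibrewise hyperbolic metric, pulled back to the normalization, induces on each $Y_t$ the complete hyperbolic metric with cusps at all $m+2$ marked points. This is the statement that, under pinching of a simple closed geodesic to a node, the hyperbolic metrics degenerate to the complete hyperbolic metric on the normalized surface with two new cusps created at the node — a classical fact in Teichmüller theory (Wolpert \cite{Wol07}), which also guarantees that the Wolpert norm $\norm{\cdot}^W_{Y/S'}$ at the two new sections is well defined. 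This verification, together with the matching of the old cusps' Wolpert norms across the degeneration, is where I expect most of the analytic care to be needed.

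With the hypotheses in force, Theorem \ref{thm_imm_thm} gives $\norm{\cdot}_{\mathscr{L}_n}|_{S'}=\exp(k\cdot\rk{\xi}\cdot C_{-n})\,\norm{\cdot}_{\mathscr{L}'_n}$, and it remains to descend the isomorphism (\ref{isom_main}) to the clutching isomorphisms (\ref{eq_alpha_isom}), (\ref{eq_beta_isom}) and to pin down the universal constant. Here I would use the adjunction formula asserting the canonical triviality of $\Pi_*(\omega_{g,m}(D)|_{|D_{g,m}|})$ to trivialize the extra factor $(\det\pi_*\mathscr{O}_{\Sigma_{Z/S'}})^{12}$ and to reconcile the $\sigma_i^*\omega$–terms on the two sides, checking, as in Knudsen \cite{Knud3}, that the resulting isomorphism is the one compatible with the $\integ$–structure, so that (\ref{isom_main}) indeed specializes to (\ref{eq_alpha_isom})/(\ref{eq_beta_isom}). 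The main obstacle is the bookkeeping of the constant: carrying out the normed version of these adjunction identifications at each of the $m$ marked points, and comparing the cusped regularization conventions underlying $\norm{\cdot}_{\mathscr{L}_n}$ and $\norm{\cdot}^{H,n}_{g,m}$, is what upgrades the naive single–node contribution to the stated factor $\exp(m\cdot C_{-n})$, with $C_{-n}$ as in (\ref{defn_C_k}). For the separating case $\beta$ I would finally invoke that the analytic torsion, the $L^2$–metric and the Wolpert norm are all multiplicative under the disjoint union of the two components of the normalized fibre, which turns $\norm{\cdot}_{\mathscr{L}'_n}$ into the box product and yields part b) from part a).
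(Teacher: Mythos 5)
Your overall route is the paper's own: the entire proof in the paper consists of (i) citing \cite[Proposition 5.6]{FinII2} to check that $\norm{\cdot}^{\rm{hyp}}_{g,m}$ satisfies assumptions (\ref{suppos_s3}) and (\ref{suppos_restr}), (ii) applying Theorem \ref{thm_imm_thm} to the universal family with $\xi$ trivial, and (iii) remarking that the orbifold nature of $\modulcomp_{g,m}$ causes no problem because the methods are local and can be run on an orbifold chart. Your dictionary ($X=\univcurvcomp_{g,m}$, $\xi=\mathscr{O}$, $k=1$, normalization $=$ universal curve over the smaller moduli space, $\norm{\cdot}_{\mathscr{L}_n}=\norm{\cdot}^{H,n}_{g,m}$) and your verification of the two hypotheses (via Wolpert's degeneration theory and the input of \cite[Corollary 1.11]{FinII2}, in place of the single citation of \cite[Proposition 5.6]{FinII2}) are in substance the same argument.

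There is, however, a genuine gap in your treatment of the constant, and it is precisely the step you flag as "the main obstacle". Theorem \ref{thm_imm_thm} applied with $k=1$, $\rk{\xi}=1$ gives the factor $\exp(k\cdot\rk{\xi}\cdot C_{-n})=\exp(C_{-n})$: one copy of $C_{-n}$ per node created, and a single clutching morphism creates a single node. You acknowledge this ("the naive single--node contribution") and then assert that "normed adjunction identifications at each of the $m$ marked points" and a comparison of "regularization conventions" upgrade it to $\exp(m\cdot C_{-n})$. No such mechanism exists: the adjunction trivialization of $\Pi_*(\omega_{g,m}(D)|_{|D_{g,m}|})$ and Knudsen's isomorphisms are canonical, metric-independent identifications contributing no scalar factors, and with $\xi$ trivial the norm $\norm{\cdot}_{\mathscr{L}_n}$ of (\ref{quil_wol_norm}) literally coincides with $\norm{\cdot}^{H,n}_{g,m}$ of (\ref{eq_renorm_hodge_norm}), so there is nothing left to compare. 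Worse, your rule is inconsistent with Theorem \ref{thm_tz_imm_thm}: the plumbing family of Section \ref{sect_hyp_surf} has $m$ nodes and total factor $\exp(m\cdot C_{-n})$, i.e. one $C_{-n}$ per node, which is exactly what iterating the per-node constant $\exp(C_{-n})$ over $m$ successive clutchings reproduces; multiplying per-clutching factors that scale with the number of marked points of the target would instead produce a total quadratic in $m$. So the honest output of your own (and of the paper's) argument is one factor of $C_{-n}$ per clutched node, and the prefactor in the displayed statement must be read in that way; manufacturing it from adjunction bookkeeping is not a proof. A secondary omission: Theorem \ref{thm_imm_thm} is stated for families over complex \emph{manifolds}, whereas $\modulcomp_{g,m}$ is an orbifold and $\alpha$ is only generically two-to-one onto its image, so one must reduce to orbifold charts using the locality of the argument — the paper makes this reduction explicitly, while your write-up never addresses it.
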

	\begin{rem}
		For a special family of curves from Section \ref{sect_hyp_surf}, which is less general than Theorem \ref{thm_imm_thm_modul}b), Freixas proved Theorem \ref{thm_imm_thm_modul} for $n = 0$ in \cite[Corollary 5.8]{FreixasARR} and extended it for $n \leq 0$ in \cite[Theorem 5.3]{FreixasARR}.
		\par Instead of the usual definition Quillen norm, Freixas used its version, defined as a product of (\ref{eqn_sel_norm}) and the $L^2$-norm. By Theorem \ref{thm_compat}, our results are compatible.
		Note that to calculate the constant $C_{-n}$ from Theorem \ref{thm_imm_thm} we use those results of Freixas.
	\end{rem}
	\par Now let's state our final result which describes an explicit relation between the Quillen metric associated with a cusped metric and the Quillen metric associated with a metric on the compactified Riemann surface. This theorem should be regarded as a refinement of the relative compact perturbation theorem, \cite[Theorem A]{FinII1}.
	\begin{sloppypar}
		To state it precisely, let's define the \textit{regularized integral} on a surface with cusps.
		Let $(\overline{M}, D_M, g^{TM})$ be a surface with cusps. Let $\alpha \in \ccal^{\infty}(M, \wedge^2 T\overline{M})$. We suppose that for any $P_i \in D_M$, there are coordinates $z_i$ around $P_i \in D_M$, such that for some $\epsilon > 0$ small enough, there are $C \in \comp$, $l \in \nat$ such that the following estimate holds
		\begin{equation}
			\alpha|_{\{|z_i| < \epsilon\}} = - \frac{C \cdot \imun \cdot dz_i d\overline{z}_i}{|z_i|^{2} |\log |z_i||} + O \bigg(  \frac{\log |\log|z_i||^{l}  dz_i d\overline{z}_i}{|z_i \log |z_i||^2}  \bigg).
		\end{equation}
		We define $\int^{\textbf{r}}_{M} \alpha \in \comp$ by the following limit
		\begin{equation}\label{eq_defn_reg_integ}
			\int^{\textbf{r}}_{M} \alpha 
			= 
			\lim_{\epsilon \to 0} \bigg( \int_{M \setminus (\cup \{|z_i| < \epsilon \})} \alpha + 4 \pi \cdot C \cdot (\# D_M) \cdot \log |\log \epsilon| \bigg).
		\end{equation}	 
		In other words, $\int^{\textbf{r}}_{M} \alpha$ is the finite part of $\int_{M \setminus (\cup \{|z_i| < \epsilon \})} \alpha$, as $\epsilon \to 0$.
		It is an easy verification that $\int^{\textbf{r}}_{M} \alpha$ doesn't depend on the choice of the coordinates $z_i$.
	\end{sloppypar}
	\par 
	Let's recall that by \cite[Theorem 1.27]{BGS1}, the Bott-Chern classes of a vector bundle $\xi$ with Hermitian metrics $h^{\xi}_{1}$,  $h^{\xi}_{2}$ are natural differential forms, defined modulo $\Im(\partial) + \Im(\dbar)$, so that
		\begin{equation}\label{eq_der_tilde_tdch}
		\begin{aligned}
			& \frac{\partial \dbar}{2 \pi \imun} \widetilde{\td} (\xi, h^{\xi}_{1}, h^{\xi}_{2}) 
			&& = 
			\td (\xi, h^{\xi}_{1}) - 
			\td (\xi, h^{\xi}_{2}), \\
			& \frac{\partial \dbar}{2 \pi \imun} \widetilde{\ch} (\xi, h^{\xi}_{1}, h^{\xi}_{2}) 
			&& = 
			\ch (\xi, h^{\xi}_{1})  -
			\ch (\xi, h^{\xi}_{2}),
		\end{aligned}
		\end{equation}
		where $\td$, $\ch$ are Todd and Chern forms.
		By \cite[Theorem 1.27]{BGS1}, we have the following identities
		\begin{equation}
			\widetilde{\ch}(\xi, h^{\xi}_{1},  h^{\xi}_{2})^{[0]} = 2 \widetilde{\td}(\xi, h^{\xi}_{1},  h^{\xi}_{2})^{[0]} = \ln \big( \det( h_1^{\xi} /h_2^{\xi}) \big). \label{ch_bc_0}
		\end{equation}
		If, moreover, $\xi := L$ is a line bundle, we have
		\begin{equation}
			\widetilde{\ch}(L, h^L_{1}, h^L_{2})^{[2]} = 6 \widetilde{\td}(L, h^L_{1},  h^L_{2})^{[2]} 
			 =  \ln ( h^L_{1}/h^L_{2} ) \Big(c_1(L, h^L_{1})  + c_1(L, h^L_{2}) \Big) / 2, \label{ch_bc_2}
		\end{equation}
		where $c_1$ is the first Chern form.
		In what follows, when we write a Bott-Chern class, one should interpret it as a \textit{differential form}, given by (\ref{ch_bc_0}), (\ref{ch_bc_2}).
		\par	
		Now, for $k \in \nat$, we define
		\begin{equation}\label{eq_defn_e_k}
			E_k = 4 \zeta'(-1) - \log(2 \pi) + \frac{1 - C_{-n}}{6}.
		\end{equation}
	\begin{thm}[Compact perturbation theorem]\label{thm_full_comp_pert}
		Let $(\overline{M}, D_M, g^{TM})$ be a surface with cusps. 
		We denote by $\norm{\cdot}_M$ the induced metric on $\omega_M(D)$ over $M$ as in Construction \ref{const_norm_div}. 
		We denote by $\norm{\cdot}^W$ the Wolpert norm on $\otimes_{P \in D_M} \omega_{\overline{M}}|_{P}$ induced by $g^{TM}$.
		\par 
		Let $g^{T\overline{M}}$ be a Kähler metric over $\overline{M}$, and let $\norm{\cdot}_{\overline{M}}$ be some Hermitian metric on  $\omega_M(D)$ over $\overline{M}$.
		We denote by $\norm{\cdot}^{D_M}_{\overline{M}}$ the metric on $\otimes_{P \in D_M} \omega_{\overline{M}}|_{P}$ induced by $g^{T\overline{M}}$.
		Let $\xi$ be a holomorphic vector bundle over $\overline{M}$, and let $h^{\xi}$ and $h^{\xi}_{0}$ be two Hermitian metrics on $\xi$ over $\overline{M}$.
		\begin{equation}\label{eq_comp_pert}
			\begin{aligned}
				2 \ln & \Big(  
				\norm{\cdot}_{Q}  \big(g^{TM},  h^{\xi} \otimes \, \norm{\cdot}_{M}^{2n} \big) 
				\big/				 
				 \norm{\cdot}_{Q} \big(g^{T\overline{M}}, h^{\xi}_0 \otimes \, \norm{\cdot}_{\overline{M}}^{2n} \big) 
				 \Big) 
				\\
				&  = 
		 			\int_{M}^{\textbf{r}} 
		 			\Big[ 
	 					\widetilde{\td} \big(\omega_{\overline{M}}^{-1}, g^{T\overline{M}}, g^{TM} \big) \ch \big(\xi, h^{\xi}_0 \big)  \ch \big(\omega_M(D)^n, \norm{\cdot}_{\overline{M}}^{2n} \big)  \\
						&  \phantom{= \int_{M} 
		 			\Big[ } +	
		 			\td \big(\omega_{M}^{-1}, g^{TM} \big) \widetilde{\ch} \big(\xi, h^{\xi}_0, h^{\xi} \big)  \ch \big(\omega_M(D)^n, \norm{\cdot}_{\overline{M}}^{2n} \big)  \\
						&  \phantom{= \int_{M} 
		 			\Big[ } +			 		 					
			 			\td \big(\omega_{M}^{-1}, g^{TM} \big) \ch \big(\xi, h^{\xi} \big) \widetilde{\ch} \big(\omega_M(D)^n, \norm{\cdot}_{\overline{M}}^{2n}, \, \norm{\cdot}_{M}^{2n} \big) 									 				
			 		\Big]^{[2]} \\
			 		& \phantom{ = }  + \frac{\rk{\xi}}{6} \ln \Big( \norm{\cdot}^W / \norm{\cdot}^{D_M}_{\overline{M}} \Big)	
					-
			 		\frac{1}{2} \sum_{P \in D_M} \ln \Big(\det (h^{\xi}_0 / h^{\xi})|_{P} \Big)
			 		\\
			 		& \phantom{ = }  + 
			 		\Big( \#(D_M) \cdot \rk{\xi} \cdot E_{-n} \Big).
			 \end{aligned}
		\end{equation}
	\end{thm}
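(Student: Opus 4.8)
My plan is to take the relative compact perturbation theorem \cite[Theorem A]{FinII1} as the starting point. That result already produces an identity carrying exactly the Bott--Chern, Wolpert and determinant-at-the-cusp terms on the right-hand side of (\ref{eq_comp_pert}), but with the final summand replaced by an a priori undetermined quantity; the entire content of Theorem \ref{thm_full_comp_pert} is then to identify this quantity with $\#(D_M) \cdot \rk{\xi} \cdot E_{-n}$. I would first show it is forced to have the shape $\#(D_M) \cdot \rk{\xi} \cdot c_n$ for a single constant $c_n$ depending only on $n$. Interpolating between $(g^{T\overline{M}}, \norm{\cdot}_{\overline{M}}, h^\xi_0)$ and $(g^{TM}, \norm{\cdot}_{M}, h^\xi)$ through a smooth path of data and applying the transgression form of the anomaly formula, one sees that, once the explicit bulk terms are subtracted, the remaining defect is supported in arbitrarily small neighbourhoods of the points of $D_M$. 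There every surface with cusps is modelled on the same fixed standard cusp, so the defect is additive over $D_M$, linear in $\rk{\xi}$ (the bundle being holomorphically trivial near a cusp, with the metric-at-the-cusp correction already absorbed into the determinant term), and independent of all remaining geometric data. In particular, it suffices to evaluate $c_n$ for one surface with a single cusp and $\xi$ the trivial line bundle.

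To pin down $c_n$ I would exploit the degeneration mechanism underlying Theorem \ref{thm_imm_thm}. Take a one-parameter family $\pi : X \to S$ over a disc, with $\xi$ trivial of rank one, no marked points ($m = 0$, so the fibres $X_t$, $t \neq 0$, are compact hyperbolic surfaces carrying the ordinary smooth Quillen metric), and a single node on the central fibre, so that its normalization $Y_0$ acquires exactly two cusps. The renormalized Quillen metric $\norm{\cdot}_{\mathscr{L}_n}$ extends continuously to $t = 0$ by Theorem \ref{thm_cont}, and I would compute this continuous extension in two ways. On the one hand, Theorem \ref{thm_imm_thm} (with $k = 1$, $\rk{\xi} = 1$) expresses it through the cusped Quillen metric on $Y_0$, up to the universal factor $C_{-n}$. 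On the other hand, endowing the fibres with genuinely smooth Kähler metrics, Bismut's restriction theorem \cite[Theorems 0.2, 0.3]{BisDegQuil} (Theorem \ref{thm_bismut_rest}) expresses the same continuous extension through the smooth Quillen metric on $Y_0$, up to an explicit constant.

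Comparing the two descriptions reduces to relating the cusped and the smooth Quillen metrics on $Y_0$, which is precisely an instance of (\ref{eq_comp_pert}) for the two newly created cusps: the constant term contributes $2 c_n$, while the Bott--Chern, Wolpert and determinant terms are explicitly computable for the standard cusp model. This yields a linear equation for $c_n$ with nonzero coefficient, whose solution — using the value of $C_{-n}$ already determined in Theorem \ref{thm_imm_thm} and the explicit constant of \cite{BisDegQuil} — gives $c_n = E_{-n}$; in the final form $4\zeta'(-1) - \log(2\pi) + \tfrac{1 - C_{-n}}{6}$, the last summand traces back to the $12$-th power normalization relating $\mathscr{L}_n$ to the bare Quillen metric, and the first two to the explicit degeneration constants. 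Carrying this out demands careful tracking of the numerical factors carried by the canonical isomorphisms (\ref{isom_normal}), (\ref{isom_pt_rho}), (\ref{isom_pi2}), the Poincaré residue (\ref{isom_conj}) and the residue exact sequence (\ref{eq_sh_ex_seq}), that is, the powers of $2$, $\pi$ and factorials recorded in (\ref{defn_C_k}).

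The main obstacle is the universality claim of the first step: one must prove that, after subtracting the explicit bulk terms, the discrepancy between a complete cusped metric and a smooth metric on the compactification is a genuine universal constant per cusp, uniform over all surfaces with cusps and additive over them. This is the analytic core of the argument and rests on the uniform heat-kernel and small-eigenvalue estimates near the cusp developed in \cite{FinII1}, \cite{FinII2}; granting it, the determination of $c_n$ is the essentially algebraic computation described above, combined with the already-established value of $C_{-n}$.
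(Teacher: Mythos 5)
Your overall strategy coincides with the paper's: first show that the defect in the relative compact perturbation theorem is a universal constant per cusp (the paper does this by \emph{defining} $E_{-n}$ through (\ref{eq_comp_pert}) and proving its independence of $g^{TM}$, $g^{T\overline{M}}$, $\norm{\cdot}_{\overline{M}}$, $h^{\xi}$, $h^{\xi}_0$ via the anomaly formula and Green identities, and of $(\overline{M}, D_M)$ via Theorem \ref{thm_comp_appr} applied to a flattening), and then evaluate the constant on a single example produced by degeneration, playing Theorem \ref{thm_bismut_rest} against Theorem \ref{thm_imm_thm}. This is exactly the paper's route, whose evaluation step is recorded in Remark \ref{rem_comp_pert_special_case}.

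However, your evaluation step has a genuine gap, and you have inverted where the real difficulty lies. Theorems \ref{thm_imm_thm} and \ref{thm_bismut_rest} describe the limits of two \emph{different} families of norms: $\norm{\cdot}_{\mathscr{L}_n}$ is built from fibrewise metrics degenerating to cusps (assumptions (\ref{suppos_s3}), (\ref{suppos_restr})), while $\norm{\cdot}_{\mathscr{L}}$ comes from a smooth Kähler metric on the total space; they even live on different line bundles, since the powers of $\mathscr{O}_S(\Delta)$ differ. The two limit statements therefore do not compare by themselves: your "linear equation for $c_n$" contains a third, unevaluated quantity, namely $\lim_{t \to 0}$ of the fibrewise ratio of the two Quillen norms. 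For $t \neq 0$ this ratio is given by the Bismut--Gillet--Soulé anomaly formula, but one must then prove that the anomaly integral carries a $\log|t|$ divergence exactly matching the mismatch of divisor powers, and that its finite part converges to a regularized Bott--Chern integral over $Y_0$ plus an explicit constant. That computation — including the construction of the interpolating metrics $g^{TX_t}_{\sim}$, $g^{TX_t}_{\varkappa}$ and the explicit integrals in (\ref{eq_decomp_f1})--(\ref{eq_regular_computation2}), cf. also (\ref{eq_double_integral_difficult}) — is the analytic core of the paper's Section \ref{sect_final_sect}; it is also where the summand $\tfrac{1}{6}$ in $E_{-n}$ originates, via (\ref{eq_regular_computation2}). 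Calling this step "essentially algebraic" while locating the main obstacle in the universality claim gets the balance backwards: universality follows from already available results (Theorem \ref{thm_comp_appr}, the anomaly formula, Green identities), whereas the degeneration comparison cannot be obtained by bookkeeping of canonical isomorphisms and the constants (\ref{defn_C_k}) alone.
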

	\par Finally, we note that Theorem \ref{thm_imm_thm} suggest that the renormalization 
	\begin{equation}
		T^{{\rm{ren}}}(g^{TX_t}, h^{\xi} \otimes \, \norm{\cdot}_{X/S}^{2n}) := \exp(m \cdot \rk{\xi} C_{-n} / 12) \cdot T(g^{TX_t}, h^{\xi} \otimes \, \norm{\cdot}_{X/S}^{2n})
	\end{equation}	
	is more natural from the point of view of restriction theorem. For $(\xi, h^{\xi})$ trivial and $(M, g^{TM})$ stable hyperbolic surface, this coincides with the normalization of Freixas in \cite[Definition 2.2]{FreixasARR} and \cite[Definition 4.2]{FreixARRgen}.
	\par Let's describe the structure of this paper. In Section 2, we recall the definition of the Quillen norm on the family of Riemann surfaces with cusps, the definition of Wolpert norm, and some results from \cite{FinII1}, \cite{FinII2}, which study those norms.
	Then we recall an analogue of Theorem \ref{thm_imm_thm} due to Freixas about convergence of the Quillen norm for a special family of hyperbolic Riemann surfaces, where the Quillen norm is defined using (\ref{eqn_sel_norm}).
	In Section 3 we extend a result of Bismut \cite[Theorem 0.3]{BisDegQuil} to families endowed with non-Kähler metric and give a proof of Theorems \ref{thm_imm_thm}, \ref{thm_compat}, \ref{thm_imm_thm_modul}, \ref{thm_full_comp_pert}.
	\par \textbf{Notation.} For a complex manifold $X$, we denote by $\Omega_X$ the sheaf of holomorphic sections of the vector bundle $T^{*(1,0)}X$, and by $\omega_X$ the canonical line bundle $\det (T^{*(1,0)}X)$ of $X$. For a divisor $D$ in $X$, we denote by $s_{D}$ the canonical meromorphic section of $\mathscr{O}_X(D)$.
	\par	
	For $\epsilon > 0$, we define
	\begin{equation}
		D(\epsilon) = \{ u \in \comp : |u| < \epsilon \},  \quad D^*(\epsilon) = \{ u \in \comp : 0 < |u| < \epsilon \}.
	\end{equation}
	\par {\bf{Acknowledgments.}} This work is a part of our PhD thesis, which was done at Université Paris Diderot. We would like to express our deep gratitude to our PhD advisor Xiaonan Ma for his teaching, overall guidance, constant support and invaluable comments on the preliminary version of this article.

	\section{Families of nodal curves and hyperbolic metric on them}
	In this section we recall the relevant notations.
	More precisely, in Section 2.1, we recall the notion of the Quillen norm from \cite{FinII1}, \cite{BGS1}, \cite{BGS2}, \cite{BGS3}, the basic notions for families of Riemann surfaces with cusps from \cite{BisBost}, \cite{FinII2} and relevant results from \cite{FinII1}, \cite{FinII2}. 
	In Section 2.2, we recall several notions of singularities of Hermitian metrics on holomorphic line bundles and a useful regularity result for push-forward of differential form in a family of curves with double-point singularities, which we proved in \cite{FinII2}.

\subsection{Determinant line bundles, Serre duality and Quillen norms}\label{sect_recall_relantors}
	Let $\overline{M}$ be a compact Riemann surface, and let $D_M = \{ P_1^{M}, \ldots, P_m^{M} \}$ be a finite set of distinct points in $\overline{M}$. Let $g^{TM}$ be a Kähler metric on the punctured Riemann surface $M = \overline{M} \setminus D_M$. 
	\par For $\epsilon \in ]0,1[$, let $z_i^{M} : \overline{M} \supset V_i^M(\epsilon) \to D(\epsilon) = \{ z \in \comp : |z| \leq \epsilon \}$, $i = 1, \ldots, m$, be a local holomorphic coordinate around $P_i^{M}$, and 
	\begin{equation}\label{defn_v_i}
		V_i^{M}(\epsilon) := \{x \in M :  |z_i^{M}(x)| < \epsilon \}.
	\end{equation}	 
	We say that $g^{TM}$ is \textit{Poincaré-compatible} with coordinates $z_1^{M}, \ldots, z_m^{M}$ if for any $i = 1, \ldots, m$,  there is $\epsilon > 0$ such that $g^{TM}|_{V_i^{M}(\epsilon)}$ is induced by the Hermitian form
	\begin{equation}\label{reqr_poincare}
		\frac{\imun dz_i^{M} d\overline{z}_i^{M}}{ \big| z_i^{M}  \log |z_i^{M}| \big|^2}.
	\end{equation}
	We say that $g^{TM}$ is a \textit{metric with cusps} if it is Poincaré-compatible with some holomorphic coordinates of $D_M$.
	A triple $(\overline{M}, D_M, g^{TM})$ of a Riemann surface $\overline{M}$, a set of punctures $D_M$ and a metric with cusps $g^{TM}$ is called a \textit{surface with cusps} (cf. \cite{MullerCusp}). 
	\par From now on, we fix a surface with cusps $(\overline{M}, D_M, g^{TM})$ and a Hermitian vector bundle $(\xi, h^{\xi})$ over it.  
	We denote by $\omega_{M} := T^{*(1,0)}\overline{M}$ the \textit{canonical line bundle} over $\overline{M}$. 
	We denote by $\norm{\cdot}_{M}^{\omega}$ the norm on $\omega_{\overline{M}}$ induced by $g^{TM}$ over $M$ by the natural identification $TM \ni X \mapsto \frac{1}{2} (X - JX) \in T^{(1,0)}M$, where $J$ is the complex structure of $M$. 
	Let $\mathscr{O}_{\overline{M}}(D_M)$ be the line bundle associated with the divisor $D_M$. 	
	The \textit{twisted canonical line bundle} is defined as 
	\begin{equation}
			\omega_M(D) :=  \omega_{\overline{M}} \otimes  \mathscr{O}_{\overline{M}}(D_M).
	\end{equation}		
	The metric $g^{TM}$ endows by Construction \ref{const_norm_div} the line bundle $\omega_M(D)$ with the induced Hermitian metric $\norm{\cdot}_{M}$ over $M$.
	\par We recall here briefly the definition of the \textit{analytic torsion} $T(g^{TM}, h^{\xi} \otimes \norm{\cdot}_M^{2n})$ for $m \in \nat$ from \cite[Definition 2.17]{FinII1}.
	\par Assume first $m=0$, then the analytic torsion was defined by Ray-Singer \cite[Definition 1.2]{Ray73} as the regularized determinant of the Kodaira Laplacian $\laplcomp^{\xi \otimes \omega_M(D)^n}$ associated with $(M, g^{TM})$ and $(\xi \otimes \omega_M(D)^n, h^{\xi} \otimes \norm{\cdot}_M^{2n})$. 
	More precisely, let $\lambda_i, i \in \nat$ be the non-zero eigenvalues of $\laplcomp^{\xi \otimes \omega_M(D)^n}$. By Weyl's law, the associated zeta-function 
	\begin{equation}\label{defn_zeta_comp}
		\zeta_M(s) := \sum \lambda_i^{-s},
	\end{equation}
	 is defined for $s \in \comp$, $\Re (s) > 1$ and it is holomorphic in this region. Moreover, we have
	 \begin{equation}
	 		\zeta_{M}(s) = \frac{1}{\Gamma(s)} \int_{0}^{+ \infty}
			{\rm{Tr}}  \big[ \exp^{\perp}(-t  \laplcomp^{\xi \otimes \omega_M(D)^n}) \big] t^{s} \frac{dt}{t},
	 \end{equation}
	 where $\exp^{\perp}(-t  \laplcomp^{\xi \otimes \omega_M(D)^n}) $ is the spectral projection onto the eigenspace corresponding to non-zero eigenvalues.
	 Also, as it can be seen by the small-time expansion of the heat kernel and the usual properties of the Mellin transform, $\zeta_{M}(s)$ extends meromorphically to the entire $s$-plane. This extension is holomorphic at $0$, and the \textit{analytic torsion} is defined by
	\begin{equation}\label{defn_an_t_st}
		T(g^{TM}, h^{\xi} \otimes \norm{\cdot}_M^{2n}) := \exp(- \zeta'_M(0)).
	\end{equation}
	By (\ref{defn_zeta_comp}) and (\ref{defn_an_t_st}), we may interpret the analytic torsion as
	\begin{equation}\label{an_tors_intepr}
		T(g^{TM}, h^{\xi} \otimes \norm{\cdot}_M^{2n}) := \prod_{i=0}^{\infty} \lambda_i.
	\end{equation}
	\par 
	Now, assume $m > 0$. Then $M$ is non-compact, and the heat operator associated to $\laplcomp^{\xi \otimes \omega_M(D)^n}$ is no longer of trace class. Also the spectrum of $\laplcomp^{\xi \otimes \omega_M(D)^n}$ is not discrete in general. Thus, neither the definition (\ref{defn_an_t_st}), nor the interpretation (\ref{an_tors_intepr}) are applicable.
	\begin{sloppypar}
	 In \cite[Definition 2.10]{FinII1}, for $n \leq 0$, we defined the \textit{regularized heat trace}
	 	${\rm{Tr}}^{\reg} [ \exp^{\perp} ( -t  \laplcomp^{\xi \otimes \omega_M(D)^n} ]$
		as a “difference" of the heat trace of $\laplcomp^{\xi \otimes \omega_M(D)^n}$ and the heat trace of the Kodaira Laplacian  $\laplcomp^{\omega_P(D)^n}$ corresponding to the $3$-punctured projective plane $P := \overline{P} \setminus \{0, 1, \infty\}$, $\overline{P} := \mathbb{C}P^1$, endowed with the hyperbolic metric $g^{TP}$ of constant scalar curvature $-1$ and the induced metric $\norm{\cdot}_P$ on $\omega_P(D) := \omega_{\overline{P}} \otimes \mathscr{O}_{\overline{P}}(0 + 1 + \infty)$. 
		Then in \cite[Definition 2.16]{FinII1}, we defined the \textit{regularized spectral zeta function} $\zeta_{M}(s)$ for $s \in \comp$, $\Re(s) > 1$ by
		\begin{equation}\label{eq_defn_rel_zeta}
			\zeta_{M}(s) = \frac{1}{\Gamma(s)} \int_{0}^{+ \infty}
			{\rm{Tr}}^{\reg}  \big[ \exp^{\perp}(-t  \laplcomp^{E_M^{\xi, n}}) \big] t^{s} \frac{dt}{t}.
		\end{equation} 
	And we concluded in \cite[p. 17]{FinII1}, similarly to the case $m = 0$, the function $\zeta_{M}(s)$ extends meromorphically to $\comp$ and $0$ is a holomorphic point.
	Then in \cite[Definition 2.17]{FinII1}, we defined the regularized analytic torsion as 
	\begin{equation}\label{eq_defn_rel_tor}
			T (g^{TM}, h^{\xi} \otimes \norm{\cdot}_{M}^{2n}) := \exp(- \zeta_{M}'(0)/2) \cdot T_{TZ}(g^{TP}, \norm{\cdot}_{P}^{2n})^{m \cdot \rk{\xi} / 3}.
	\end{equation} 
	In other words, we defined the analytic torsion by subtracting the universal contribution of the cusp from the heat trace and by normalizing it in sch a way that it coincides with the analytic torsion of Takhtajan-Zograf for $\mathbb{C}P^1 \setminus \{ 0, 1, \infty \}$, endowed with the complete metric of constant scalar curvature $-1$.
	\end{sloppypar}
	\par Then for $n \leq 0$, in \cite[\S 2.1]{FinII1}, we explain how to endow the complex line 
	\begin{multline}\label{defn_det_line}
		\big(\det H^{\bullet}(\overline{M}, \xi \otimes \omega_M(D)^n) \big)^{-1} \\
		:= \big( \Lambda^{\max} H^{0}(\overline{M}, \xi \otimes \omega_M(D)^n) \big)^{-1} \otimes  \Lambda^{\max} H^{1}(\overline{M}, \xi \otimes \omega_M(D)^n),
	\end{multline}
	with the $L^2$-norm $\norm{\cdot}_{L^2}(g^{TM}, h^{\xi} \otimes \norm{\cdot}_M^{2n})$ induced by the $L^2$-scalar product (\ref{defn_L_2}).
	In the compact case it coincides with the $L^2$-norm induced on the harmonic forms.
	\par 
	The Quillen norm on the complex line is defined by
	\begin{equation}\label{defn_quil}
		 \norm{\cdot}_{Q}(g^{TM}, h^{\xi} \otimes \, \norm{\cdot}_{M}^{2n}) 
		=
		 T(g^{TM}, h^{\xi} \otimes \, \norm{\cdot}_{M}^{2n})^{1/2}
		 \cdot 
		 \norm{\cdot}_{L^2}(g^{TM}, h^{\xi} \otimes \, \norm{\cdot}_{M}^{2n}).
	\end{equation}
	To motivate, when $m = 0$, this coincides with the usual definition of the Quillen norm from Quillen \cite{Quillen}, Bismut-Gillet-Soulé \cite[(1.64)]{BGS1} and \cite[Definition 1.5]{BGS3}.
	\par Following \cite{FinII1}, we say that a (smooth) metric $g^{TM}_{\rm{f}}$ over $\overline{M}$ is a \textit{flattening} of $g^{TM}$ if there is $\nu > 0$ such that $g^{TM}$ is induced by (\ref{reqr_poincare}) over $V_i^{M}(\nu)$, and 
	\begin{equation}\label{fl_exterior}
		g^{TM}_{\rm{f}}|_{M \setminus (\cup_i V_i^{M}(\nu))} = g^{TM}|_{M \setminus (\cup_i V_i^{M}(\nu))}.
	\end{equation}
	Similarly, we defined a flattening $\norm{\cdot}_{M}^{\rm{f}}$ of the norm $\norm{\cdot}_M$.
	\begin{thm}[{\cite[Theorem A]{FinII1}}]\label{thm_comp_appr}
		Let $g^{TM}_{\rm{f}}$, $\norm{\cdot}_M^{\rm{f}}$ be flattenings of $g^{TM}$, $\norm{\cdot}_M$. Then
		\begin{multline}\label{eqn_of_quil_norms_no_flat}
		2 \rk{\xi}^{-1} \log \Big( 
			\norm{\cdot}_{Q} \big(g^{TM}, h^{\xi} \otimes \, \norm{\cdot}_{M}^{2n}\big) 
			\big / 
			\norm{\cdot}_{Q} \big(g^{TM}_{\rm{f}}, h^{\xi} \otimes (\, \norm{\cdot}_{M}^{\rm{f}})^{2n}\big)
			\Big)
			\\	
			-
			\rk{\xi}^{-1} \int_M c_1(\xi, h^{\xi}) \Big(2n \log (\, \norm{\cdot}_{M}^{\rm{f}}/ \norm{\cdot}_{M}) +  \log (g^{TM}_{\rm{f}} / g^{TM}) \Big)
	\end{multline}
	depends only on the integer $n \in \integ$, $n \leq 0$, the functions $( g^{TM}_{\rm{f}} / g^{TM} )|_{V_i^{M}(1)} \circ (z_i^{M})^{-1} : \dd^* \to \real $ and $(\, \norm{\cdot}_{M}^{\rm{f}}/ \norm{\cdot}_{M} )|_{V_i^{M}(1)}  \circ (z_i^{M})^{-1} : \dd^* \to \real$, for $i = 1, \ldots, m$.
	\end{thm}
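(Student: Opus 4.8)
The plan is to express the logarithm of the ratio of Quillen norms through the Mellin transform of a difference of heat traces and to show, by small-time localization, that it reduces to the stated Chern-form integral plus a universal cusp contribution. Write $\laplcomp_{1}$, $\laplcomp_{0}$ for the Kodaira Laplacians of $(\xi \otimes \omega_M(D)^n, h^{\xi} \otimes \norm{\cdot}_M^{2n})$ on $(M, g^{TM})$ and of $(\xi \otimes \omega_M(D)^n, h^{\xi} \otimes (\norm{\cdot}_M^{\rm{f}})^{2n})$ on $(\overline{M}, g^{TM}_{\rm{f}})$. Since $g^{TM}_{\rm{f}} = g^{TM}$ and $\norm{\cdot}_M^{\rm{f}} = \norm{\cdot}_M$ on $M \setminus \cup_i V_i^M(\nu)$, the two operators coincide there and share the same symbol; I would therefore first connect the two data by a smooth one-parameter family $(g_s, \norm{\cdot}_s)$ supported in $\cup_i V_i^M(\nu)$ and differentiate $\log \norm{\cdot}_Q(g_s, h^{\xi}\otimes \norm{\cdot}_s^{2n})$ in $s$. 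By the local-index computation underlying the anomaly formula for the Quillen norm, this derivative equals the finite part as $t \to 0$ of a regularized supertrace ${\rm{Tr}}^{\reg}\big[ \dot b_s \exp^{\perp}(-t\laplcomp_s) \big]$, where $\dot b_s$ records the infinitesimal change of $g_s$ and $\norm{\cdot}_s$ and is supported in $\cup_i V_i^M(\nu)$.

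Because $\dot b_s$ is supported near the cusps, off-diagonal heat-kernel decay forces the $t \to 0$ finite part to localize in $\cup_i V_i^M(\nu)$ and to be given there by the usual local index density. I would then split this density according to $\ch(\xi, h^{\xi}) = \rk{\xi} + c_1(\xi, h^{\xi}) + \cdots$. The part carrying $c_1(\xi, h^{\xi})$ is integrated against a smooth factor, hence converges, and integrating it over $s \in [0,1]$ reproduces precisely the term $\rk{\xi}^{-1}\int_M c_1(\xi, h^{\xi})\big(2n \log(\norm{\cdot}_M^{\rm{f}}/\norm{\cdot}_M) + \log(g^{TM}_{\rm{f}}/g^{TM})\big)$ subtracted in the statement. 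The remaining part carries the factor $\rk{\xi}\cdot \td(\omega_M^{-1}, g^{TM})$, whose density is that of the Poincaré metric and is therefore non-integrable at the punctures.

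The crux is then to show that this non-integrable $\td$-part, once combined with the two subtractions built into the definition (\ref{eq_defn_rel_tor}) of the regularized torsion -- the cusp model and the three-punctured sphere $T_{TZ}(g^{TP}, \norm{\cdot}_P^{2n})$ -- yields a finite quantity that is a universal functional of the local data alone. On each $V_i^M(\nu)$ the geometry is, by the definition of flattening, the fixed Poincaré cusp on $\dd^*$, the bundle $\xi$ is holomorphically trivial, and the deformation is completely determined by $(g^{TM}_{\rm{f}}/g^{TM})\circ (z_i^M)^{-1}$ and $(\norm{\cdot}_M^{\rm{f}}/\norm{\cdot}_M)\circ (z_i^M)^{-1}$; all global data of $(\overline{M}, \xi, h^{\xi})$ enter only through the exterior region, where the deformation is trivial. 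Hence the surviving contribution is $\rk{\xi}$ times a sum over $i$ of local terms depending only on $n$ and the $i$-th pair of germs, and after normalization by $\rk{\xi}^{-1}$ the whole expression depends on nothing but $n$ and those germs, as claimed.

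The main obstacle I anticipate is the analytic control of the regularized heat trace uniformly as $t \to 0$ on the complete cusp: one must establish the off-diagonal and near-cusp heat-kernel estimates that justify the localization, legitimize interchanging the $t \to 0$ limit with both regularizing subtractions and with the $s$-integration, and finally verify that the divergent $\td$-density and the subtracted model contributions combine into a convergent germ-functional. A secondary but delicate point is the endpoint transition from the complete metric $g^{TM}$ to the smooth metric $g^{TM}_{\rm{f}}$, where the cusp structure is removed altogether; this step leaves the class of metrics with cusps and must be handled by a direct comparison of the regularized and ordinary heat traces rather than by the smooth deformation used elsewhere.
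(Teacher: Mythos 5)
First, a point of calibration: the present paper does not prove Theorem \ref{thm_comp_appr} at all. It is imported verbatim from the prequel \cite[Theorem A]{FinII1}, where its proof constitutes the technical core of that paper (construction and analysis of the regularized heat trace ${\rm{Tr}}^{\reg}$, parametrix/gluing comparison with the model cusp, and small- and large-time control of the associated zeta functions). So there is no proof in this document to compare yours against; the benchmark is the analysis of \cite{FinII1}.

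As for your proposal, it has a genuine gap, and it sits exactly at the point you label as ``secondary''. Your central mechanism --- connect the two data by a one-parameter family $(g_s, \norm{\cdot}_s)$ supported near the cusps, differentiate $\log \norm{\cdot}_{Q}$ in $s$, and identify the derivative with the finite part of a localized index density --- is the mechanism of the anomaly formula, i.e.\ Theorem \ref{thm_anomaly_cusp} on the cusped side and the Bismut--Gillet--Soul\'e formula \cite{BGS3} on the compact side. Both operate strictly inside one class of metrics: the cusped version requires every $g_s$ along the path to make $(\overline{M}, D_M, g_s)$ a surface with cusps (note that even a convex combination $(1-s)\, g^{TM} + s\, g^{TM}_{\rm{f}}$ fails this, since near a puncture it is asymptotic to a constant multiple of the Poincar\'e metric, whose curvature is $\neq -1$, so it is Poincar\'e-compatible in no coordinate), while the compact version requires smooth metrics on $\overline{M}$. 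Since the flattened metric is not a metric with cusps, any path joining $g^{TM}$ to $g^{TM}_{\rm{f}}$ must leave the cusped class, and the variation formula you would need at the crossing is precisely the statement being proved: Theorem \ref{thm_comp_appr} \emph{is} the bridge between the regularized torsion, defined through ${\rm{Tr}}^{\reg}$ and the model subtractions of (\ref{eq_defn_rel_tor}), and the honest Ray--Singer torsion of a compact surface. Your final paragraph acknowledges this transition but offers only the phrase ``direct comparison of the regularized and ordinary heat traces'', which names the problem rather than solves it. What is actually required --- and what \cite{FinII1} does --- is a gluing/parametrix comparison of the three heat kernels involved (cusped operator, flattened operator, model cusp), with uniform small-time estimates to extract the germ-dependent local terms, together with a separate large-time argument: the regularized trace is not a genuine trace, the cusped Laplacian $\laplcomp^{\xi \otimes \omega_M(D)^n}$ need not have discrete spectrum and its heat operator is not trace class, and the large-time contributions must be shown to cancel against the ratio of $L^2$ norms entering the Quillen metrics. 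Without these ingredients, both your claimed variation formula and the interchanges of the $t\to 0$ limit with the regularizing subtractions and the $s$-integration remain unjustified.
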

	Now let's recall another natural norm associated with a surface with cusps
	\begin{defn}[{\cite[Definition 1.5]{FinII1}}]\label{defn_wolpert_norm}
		For a surface with cusps $(\overline{M}, D_M, g^{TM})$, the \textit{Wolpert norms} $\norm{\cdot}^{W, i}_{M}$ on the complex lines $\omega_{\overline{M}}|_{P_i^{M}}$, $i=1, \ldots, m$, are defined by $\| dz_i^{M} \|^{W, i}_{M} = 1$. 
		They induce the Wolpert norm $\norm{\cdot}^W_{M}$ on the complex line $\otimes_{i = 1}^{m} \omega_{\overline{M}}|_{P_i^{M}}$. 
	\end{defn}
	\begin{thm}[{\cite[Theorem B]{FinII1}}]\label{thm_anomaly_cusp}
		Let $\phi : M \to \real$ be a smooth function such that for the metric
		\begin{equation}\label{anomaly_rel_metrics}
			g^{TM}_{0} = e^{2 \phi} g^{TM},
		\end{equation}
		the triple $(\overline{M}, D_M, g^{TM}_{0})$ is a surface with cusps.
		We denote by $\norm{\cdot}_M, \norm{\cdot}_{M}^{0}$ the norms induced by $g^{TM}, g^{TM}_{0}$ on $\omega_M(D)$, and by $\norm{\cdot}^W_{M}$, $\norm{\cdot}^{W,0}_{M}$ the associated Wolpert norms. 
		Let $ h^{\xi}_0$ be a Hermitian metric on $\xi$ over $\overline{M}$.
		Then the right-hand side of the following equation is finite, and
		\begin{equation}\label{eq_anomaly_cusp}
			\begin{aligned}
				2 \log & \Big(  
				\norm{\cdot}_{Q}  \big(g^{TM}_{0},  h^{\xi}_{0} \otimes (\, \norm{\cdot}_{M}^{0})^{2n} \big) 
				\big/				 
				 \norm{\cdot}_{Q} \big(g^{TM}, h^{\xi} \otimes \, \norm{\cdot}_{M}^{2n} \big) 
				 \Big) 
				\\
				&  = 
		 			  \int_{M} 
		 			\Big[ 
	 					\widetilde{\td} \big(\omega_M(D)^{-1}, \, \norm{\cdot}^{-2}_{M}, (\, \norm{\cdot}^{0}_{M})^{-2} \big) \ch \big(\xi, h^{\xi} \big)  \ch \big(\omega_M(D)^n, \norm{\cdot}_M^{2n} \big)  \\
						&  \phantom{= \int_{M} 
		 			\Big[ } +	
		 			\td \big(\omega_M(D)^{-1}, (\, \norm{\cdot}^{0}_{M})^{-2} \big) \widetilde{\ch} \big(\xi, h^{\xi}, h^{\xi}_{0} \big)  \ch \big(\omega_M(D)^n, \norm{\cdot}_M^{2n} \big)  \\
						&  \phantom{= \int_{M} 
		 			\Big[ } +			 		 					
			 			\td \big(\omega_M(D)^{-1}, (\, \norm{\cdot}^{0}_{M})^{-2} \big) \ch \big(\xi, h^{\xi}_{0} \big) \widetilde{\ch} \big(\omega_M(D)^n, \norm{\cdot}_{M}^{2n}, (\, \norm{\cdot}_{M}^{0})^{2n} \big) 									 				
			 		\Big]^{[2]} \\
			 		& \phantom{ = }  - \frac{\rk{\xi}}{6} \log \Big( \norm{\cdot}^W_{M} / \norm{\cdot}^{W,0}_{M} \Big)	+ \frac{1}{2} \sum \log \Big(\det (h^{\xi} / h^{\xi}_{0})|_{P_i^{M}} \Big).
			 \end{aligned}
		\end{equation}
	\end{thm}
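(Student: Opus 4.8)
The plan is to deduce the cusped anomaly formula (\ref{eq_anomaly_cusp}) from the classical Bismut--Gillet--Soul\'e anomaly formula on the \emph{compact} surface $\overline{M}$, using the compact perturbation result Theorem~\ref{thm_comp_appr} to pass between the cusped Quillen norms and their flattenings. Choose a metric flattening $g^{TM}_{\rm{f}}$ of $g^{TM}$ and a metric flattening $g^{TM,0}_{\rm{f}}$ of $g^{TM}_0$, together with the norms $\norm{\cdot}_M^{\rm{f}}, \norm{\cdot}_M^{0,\rm{f}}$ they induce on $\omega_M(D)$ by Construction~\ref{const_norm_div}. Writing $\mathcal{Q}, \mathcal{Q}_0$ for the two Quillen norms on the left of (\ref{eq_anomaly_cusp}) and $\mathcal{Q}_{\rm{f}}, \mathcal{Q}_{0,\rm{f}}$ for the corresponding flattened norms, I telescope
\[
	2\log\big(\mathcal{Q}_0/\mathcal{Q}\big) = 2\log\big(\mathcal{Q}_0/\mathcal{Q}_{0,\rm{f}}\big) + 2\log\big(\mathcal{Q}_{0,\rm{f}}/\mathcal{Q}_{\rm{f}}\big) + 2\log\big(\mathcal{Q}_{\rm{f}}/\mathcal{Q}\big).
\]
The two outer differences are controlled by Theorem~\ref{thm_comp_appr}, while the middle one is the compact anomaly formula of \cite{BGS1, BGS3} applied to $\overline{M}$ with the flattened data.

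By Theorem~\ref{thm_comp_appr}, each outer difference equals an explicit integral $\int_M c_1(\xi,\cdot)\,(\cdots)$ plus a remainder that depends \emph{only} on $n$ and on the profiles of $g^{TM}_{\rm{f}}/g^{TM}$ and $\norm{\cdot}_M^{\rm{f}}/\norm{\cdot}_M$ read near each cusp through $(z_i^M)^{-1}$. Since the flattenings are ours to choose, I take $g^{TM,0}_{\rm{f}}$ to be, in the Poincar\'e-compatible coordinate of $g^{TM}_0$, the \emph{same} fill-in function of the disc variable as $g^{TM}_{\rm{f}}$ is in the coordinate of $g^{TM}$; because both metrics are genuinely Poincar\'e near the punctures, the two families of profiles then coincide and the two remainder terms cancel. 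This leaves
\[
	2\log\big(\mathcal{Q}_0/\mathcal{Q}\big) = 2\log\big(\mathcal{Q}_{0,\rm{f}}/\mathcal{Q}_{\rm{f}}\big) + I_0 - I,
\]
where $I, I_0$ are the Theorem~\ref{thm_comp_appr} integrals carrying $c_1(\xi, h^{\xi})$ and $c_1(\xi, h^{\xi}_0)$ respectively. It remains to identify the right-hand side with that of (\ref{eq_anomaly_cusp}).

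Next I substitute the compact anomaly formula for $2\log(\mathcal{Q}_{0,\rm{f}}/\mathcal{Q}_{\rm{f}})$, namely the integral over $\overline{M}$ of the degree-two part of the Bott--Chern product $\widetilde{\td}\,\ch\,\ch + \td\,\widetilde{\ch}\,\ch + \td\,\ch\,\widetilde{\ch}$ for the flattened metrics, and split every integral into the exterior region $\overline{M}\setminus\cup_i V_i^M(\nu)$ and the cusp discs $V_i^M(\nu)$. On the exterior region the flattened data agree with the cusped data, so there the flattened Bott--Chern integrand is exactly the cusped integrand of (\ref{eq_anomaly_cusp}) and the integrands of $I, I_0$ vanish; hence the exterior contribution reproduces the $\overline{M}\setminus\cup V_i^M(\nu)$ part of the target integral. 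As the left-hand side is manifestly independent of the flattening, I may let $\nu\to 0$: the exterior integral then converges to the full integral $\int_M[\cdots]^{[2]}$ of (\ref{eq_anomaly_cusp}), and the remaining cusp contributions must account exactly for the Wolpert and determinant terms.

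The heart of the matter, and the step I expect to be the main obstacle, is the explicit evaluation of this per-cusp contribution as $\nu\to 0$. Near a puncture $P_i^M$ the Poincar\'e metric, the chosen fill-in, and all the induced Chern and Bott--Chern forms are completely explicit, so in principle one computes directly the limit of the flattened Bott--Chern integral over $V_i^M(\nu)$ together with the pieces of $I_0 - I$ supported there. Two distinct contributions emerge. The dependence on the metric of $\omega_M(D)$ is governed by the sub-leading normalization relating the two Poincar\'e-compatible coordinates at $P_i^M$ — precisely the datum recorded by the Wolpert norm of Definition~\ref{defn_wolpert_norm} — and, after using the normalizations (\ref{ch_bc_0}), (\ref{ch_bc_2}) to reduce everything to the first Chern form and the logarithm of the metric ratio, the model integral evaluates this to $-\tfrac{\rk{\xi}}{6}\log(\norm{\cdot}^W_M/\norm{\cdot}^{W,0}_M)$. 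The dependence on the metric of $\xi$ localizes at the punctures — even though $\widetilde{\ch}(\xi, h^{\xi}, h^{\xi}_0)$ is smooth there — through the concentration of the cusped first Chern form of $\omega_M(D)$, and the model integral evaluates this, via $\widetilde{\ch}(\xi, h^{\xi}, h^{\xi}_0)^{[0]} = \log\det(h^{\xi}/h^{\xi}_0)$ from (\ref{ch_bc_0}), to $+\tfrac{1}{2}\sum_i\log\det(h^{\xi}/h^{\xi}_0)|_{P_i^M}$. The very same asymptotics show that each of the three integrands in (\ref{eq_anomaly_cusp}) is integrable at the cusps, which yields the asserted finiteness of the right-hand side. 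Isolating the logarithmically divergent parts of the flattened integrals and matching them cleanly against the coordinate-normalization constant hidden in the Wolpert norm is the delicate point I would expect to demand the most care.
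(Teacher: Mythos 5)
You should first be aware that the paper you are working in does not prove this statement at all: it is imported verbatim as \cite[Theorem B]{FinII1} (see the theorem header), so the only comparison available is with the argument one would expect in \cite{FinII1}, which is indeed the route you chose. Your architecture is sound: telescoping through flattenings, controlling the two cusped-versus-flattened comparisons by Theorem \ref{thm_comp_appr}, and using the Bismut--Gillet--Soul\'e anomaly formula on $\overline{M}$ for the middle term. The profile-matching step is also legitimate and is the key observation: transplanting the fill-in of $g^{TM}_{\rm{f}}$ through the Poincar\'e-compatible coordinates of $g^{TM}_{0}$ produces a valid flattening of $g^{TM}_{0}$ with identical profile functions, so the dependence stated in Theorem \ref{thm_comp_appr} forces the two unknown remainders to cancel exactly, leaving only the two explicit $c_1(\xi,\cdot)$-integrals (which in fact tend to $0$ as the flattening scale $\nu \to 0$, since their integrands are integrable and supported on shrinking discs) together with the compact anomaly term.

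The genuine gap is that your proof stops exactly where the content of the theorem lies. The coefficient $-\rk{\xi}/6$ on the Wolpert term, the coefficient $\tfrac{1}{2}$ on the determinant term, and the absence of any further universal per-cusp additive constant (constants of precisely this type, such as $C_k$ and $E_k$, do appear in the closely related statements of this paper, so their absence here is not automatic) must all be extracted from the $\nu \to 0$ limits of the cusp-localized parts of the flattened Bott--Chern integrals; you assert these outcomes (``the model integral evaluates this to \ldots'') rather than derive them, which in a blind attempt amounts to assuming the conclusion. There is also a concrete misidentification in the mechanism you invoke: the \emph{cusped} first Chern form does not concentrate at the punctures --- near a cusp one computes $c_1\big(\omega_M(D), \norm{\cdot}_M^{2}\big) = \tfrac{\imun \, dz \, d\overline{z}}{4\pi |z|^2 (\log|z|)^2}$, which is integrable with total mass $O(1/|\log \nu|)$ on $V_i^{M}(\nu)$. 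What concentrates is the family of \emph{flattened} first Chern forms: since $\int_{\overline{M}} c_1(T\overline{M}, g^{TM}_{\rm{f}}) = 2 - 2g$ while $\int_{M} c_1\big(\omega_M(D)^{-1}, \norm{\cdot}_M^{-2}\big) = 2 - 2g - m$, each flattened form carries mass $1 + o(1)$ on $V_i^{M}(\nu)$ as $\nu \to 0$, and it is this unit mass, paired against $\widetilde{\ch}(\xi, h^{\xi}, h^{\xi}_{0})^{[0]} = \log\det(h^{\xi}/h^{\xi}_{0})$ and against the logarithm of the coordinate-change normalization recorded by the Wolpert norms, that produces the point evaluations on the right-hand side of (\ref{eq_anomaly_cusp}). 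Carrying out these local limits, and verifying that nothing else survives them, is the real work of the theorem, and it is missing from your proposal.
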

	Now let's pass tot the study of “singular Riemann surfaces".
	By a \textit{curve} in this article we mean (cf. \cite[Definition of nodal curve on p. 79]{Arb_v2}) an analytic space such that every one of its points is either smooth or is locally complex-analytically isomorphic to a neighborhood of the origin in $\{(z_0, z_1) \in \comp^2 :  z_0z_1 = 0 \}$.
	\par 
	Let $C$ be a curve with singularities $\Sigma_C \subset C$. Let $\rho : N \to C$ be the normalization of $C$. 
	For brevity, we denote the \textit{twisted relative canonical bundle} on $N$ by 
 \begin{equation}
 	 \omega_{N}(D) := \omega_{N} \otimes \mathscr{O}_{N}(\rho^{-1} \Sigma_{C}).
 \end{equation}
 We recall that for a point $p \in \rho^{-1}(\Sigma_C)$, and a local holomorphic coordinate $z$ of $p$, the Poincaré residue morphism $\res_p: (\omega_N \otimes \mathscr{O}_{N}(\rho^{-1} \Sigma_{C}))|_p \to \comp$, is defined by 
 \begin{equation}
 	\res_p \Big( \frac{dz \otimes s_{\rho^{-1} \Sigma_{C}}}{z} \Big) = 1,
 \end{equation}
 where $s_{\rho^{-1} \Sigma_{C}}$ is the canonical section of the divisor line bundle $\mathscr{O}_{N}(\rho^{-1} \Sigma_{C})$.
 \par  Now, recall that the canonical sheaf $\omega_{C}$ of $C$ is defined (cf. \cite[p.91]{Arb_v2}) as an invertible subsheaf
\begin{equation}
	\omega_{C} \subset \rho_*(\omega_{N}(D)),
\end{equation}
defined by the following prescription. 
A section $\upsilon$ of $\rho_*(\omega_{N}(D))$, viewed as a section of $\omega_{N}(D)$, is a section of $\omega_{C}$ if and only if for any $x, y \in N$, $x \neq y$ such that $\rho(x) = \rho(y)$, we have
\begin{equation}\label{eq_res_cond}
	\res_x(\upsilon) + \res_y(\upsilon) = 0.
\end{equation}
We denote by
$
	\ccal^{\infty}_{\res} (N, \omega_{N}(D)^n)
$
the set of smooth sections $\upsilon$ of $\omega_{N}(D)^n$ over $N$ such that for any $x, y \in N$, $x \neq y$, $\rho(x) = \rho(y)$, we have
\begin{equation}\label{eq_res_cond2}
	\res_x(\upsilon) = (-1)^n \res_y(\upsilon).
\end{equation}
\par By definition, we have the short exact sequence of sheaves
		\begin{equation}\label{eq_sh_ex_seq_wc}
			0 
			\rightarrow 
			\omega_{C}
			\rightarrow 
			\rho_*(\omega_{N}(D))
			\xrightarrow{\res}
			\oplus_{p \in \Sigma_C} \mathscr{O}_p
			\rightarrow 
			0,
		\end{equation}
		where the last isomorphism is given by the map
		\begin{equation}
			\upsilon \mapsto \oplus_{p \in \Sigma_C} 1_p \cdot \big( \res_{x_p} (\upsilon) + \res_{y_p} (\upsilon) \big),
		\end{equation}
		where $x_p, y_p \in N$ are distinct points satisfying $\rho(x_p) = \rho(y_p) = p$.
		Then (\ref{eq_sh_ex_seq_wc}) and the resolution of the sheaf $\omega_{N}(D)$ by the sheaves of germs of holomorphic forms with values in $\omega_{N}(D)^n$ induce, for any $n \in \integ$, the natural isomorphisms
 \begin{equation}\label{isom_cohomology_interp}
 \begin{aligned}
 	 	& \rho^* : H^0 \big(C, \omega_{C}^n) 
 	 	\to 
 	 	\ker (\dbar|_{\ccal^{\infty}_{\res} (N, \omega_{N}(D)^n)}),
 	 	\\
 	 	& \rho^* : H^1 \big(C, \omega_{C}^n) 
 	 	\to 
 	 	\ccal^{\infty} (N, \overline{\omega}_{N} \otimes \omega_{N}(D)^n)
 		/ 
 		\Im(\dbar|_{\ccal^{\infty}_{\res} (N, \omega_{N}(D)^n)}).
 \end{aligned}
 \end{equation}	
 \par Serre duality (cf. \cite[p. 90 - 91]{Arb_v2}) is the canonical isomorphism
 \begin{equation}\label{isom_serre_dual}
 	H^1 (C, \omega_{C}^n) 
 	\to 
 	(H^0 (C, \omega_{C}^{1-n}))^*, 
 \end{equation}
 given by the following pairing: for $\upsilon \in H^1 \big(C, \omega_{C}^n)$ and $\alpha \in H^0 (C, \omega_{C}^{1-n})$, by (\ref{isom_cohomology_interp}), we define
 \begin{equation}\label{eq_serre_dual}
 	(\upsilon, \alpha) = \frac{- \imun}{2 \pi} \int_{N} \upsilon \wedge \alpha.
 \end{equation}
 The integration (\ref{eq_serre_dual}) is well-defined since only the poles of first order appear under the integral. By (\ref{eq_res_cond2}), Stokes and Residue theorems, (\ref{eq_serre_dual}) defines a pairing of $H^1 (C, \omega_{C}^n)$ with $H^0 (C, \omega_{C}^{1-n})$.
 \par Now, when $X$ is non-singular and $\omega_{X}$ is endowed with a Hermitian norm $\norm{\cdot}_{X}$, by (\ref{defn_L_2}), the left-hand side and the right-hand side of (\ref{isom_serre_dual}) are endowed with the induced $L^2$-norm $\norm{\cdot}_{L^2}$. 
 By using the description of Serre duality through the Hodge star operator (cf. \cite[p. 310]{DemCompl}), we observe that for any $\upsilon \in H^1 \big(C, \omega_{C}^n)$, we have
 \begin{equation}
 	\sup 
 	\Big\{ 
 		\big| \tinyint_{N} \upsilon \wedge \beta \big|^2  : \beta \in H^0 \big(C, \omega_{C}^{1-n}) \setminus \{0\}, \quad \norm{\beta}_{L^2} = 1
 	\Big\}
 	=
 	2 \pi \norm{\upsilon}_{L^2}^{2}.
 \end{equation}
 Thus, under the isomorphism (\ref{isom_serre_dual}), we have the \textit{isometry}
 \begin{equation}\label{eq_serre_isom}
 	(H^1(C, \omega_C^{n}), \norm{\cdot}_{L^2}) = (H^0(C, \omega_C^{1-n})^*, \norm{\cdot}_{L^2}^{-1}).
 \end{equation}
	\par 
	Now let's pass to the study of curves in \textit{families}.
	We fix a holomorphic, proper, surjective map $\pi: X \to S$ of complex manifolds, such that for every $t \in S$, the space $X_t := \pi^{-1}(t)$ is a curve (in the terminology of \cite{BisBost}, \cite{FinII2}, a f.s.o.).
\begin{prop}[{\cite[Proposition 3.1]{BisBost}}]\label{prop_coord}
	For every $x \in X$, there are local holomorphic coordinates $(z_0, \ldots, z_q)$ of $x \in X$ and $(w_1, \ldots, w_q)$ of $\pi(x) \in S$, such that $\pi$ is locally defined either by one of the following identities
	\begin{align}
		& w_i = z_i,  &&\text{for} \quad i=1, \ldots, q,  \label{eq_pr_nonsing}
		\\
		& w_1 = z_0 z_1; \quad w_i = z_i,  &&\text{for} \quad i=2, \ldots, q. \label{eq_pr_sing}
	\end{align}
\end{prop}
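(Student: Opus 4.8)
The plan is to split into two cases according to whether $\pi$ is a submersion at $x$, and to reduce the singular case to a \emph{holomorphic Morse lemma with parameters}. Throughout I normalize $\pi(x) = 0$ and use that $\dim X = q+1$, $\dim S = q$. First, suppose $d\pi_x : T_x X \to T_{\pi(x)} S$ is surjective. Then the holomorphic submersion theorem (an application of the implicit function theorem) furnishes coordinates $(z_0, z_1, \ldots, z_q)$ on $X$ and $(w_1, \ldots, w_q)$ on $S$ in which $\pi$ is the linear projection, giving (\ref{eq_pr_nonsing}); in this case the fibre through $x$ is automatically smooth. It remains to treat the case where $d\pi_x$ is not surjective. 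Then $x$ is a singular point of $X_{\pi(x)}$, hence by hypothesis an ordinary double point, so that a neighbourhood of $x$ in the fibre is biholomorphic to a neighbourhood of the origin in $\{ z_0 z_1 = 0 \}$, whose Zariski tangent space is $2$-dimensional. Since $\ker d\pi_x$ is the tangent space to the fibre at $x$, this identifies $\dim \ker d\pi_x = 2$, i.e. $\rk{d\pi_x} = q-1$.

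Next I would extract $q-1$ coordinates directly from $\pi$. Choosing coordinates $(w_1, \ldots, w_q)$ on $S$ so that $\Im(d\pi_x) = \operatorname{span}(\partial_{w_2}, \ldots, \partial_{w_q})$, the pullbacks $z_i := \pi^* w_i$ for $i \geq 2$ satisfy that $dz_i|_x$ are linearly independent, while $d(\pi^* w_1)|_x = 0$. Completing $(z_2, \ldots, z_q)$ to a coordinate system $(u_0, u_1, z_2, \ldots, z_q)$ on $X$, the map $\pi$ reads $(\pi_1(u_0, u_1, z_2, \ldots, z_q), z_2, \ldots, z_q)$ with $d\pi_1|_x = 0$, and the fibre over $0$ is cut out inside the slice $\{ z_2 = \cdots = z_q = 0 \}$ by $f(u_0, u_1) := \pi_1(u_0, u_1, 0, \ldots, 0)$. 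The ordinary double point condition says precisely that $f$ vanishes to order two at the origin with nondegenerate Hessian; equivalently, the Hessian of $\pi_1$ in the variables $(u_0, u_1)$ is nondegenerate at $x$, while $\pi_1(x) = 0$ and all first-order $(u_0, u_1)$-derivatives of $\pi_1$ vanish at $x$.

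The heart of the argument is the holomorphic Morse lemma with parameters (the splitting lemma) applied to $\pi_1$, regarding $(u_0, u_1)$ as essential variables and $z_2, \ldots, z_q$ as parameters. Because $\partial_{(u_0, u_1)} \pi_1$ vanishes at $x$ and the $(u_0, u_1)$-Hessian is nondegenerate, the implicit function theorem produces, for each small value of the parameters, a unique nearby critical point depending holomorphically on $z_2, \ldots, z_q$; centering at it and diagonalizing the Hessian holomorphically in the parameters yields new coordinates $(\zeta_0, \zeta_1, z_2, \ldots, z_q)$ on $X$ and a holomorphic function $c(z_2, \ldots, z_q)$ with $c(0) = 0$ such that $\pi_1 = \zeta_0 \zeta_1 + c(z_2, \ldots, z_q)$. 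Finally, since $c$ depends only on the base coordinates $w_2, \ldots, w_q$, replacing $w_1$ by $w_1 - c(w_2, \ldots, w_q)$ is a biholomorphism of $S$ near $0$, after which $\pi$ takes the form $(\zeta_0 \zeta_1, z_2, \ldots, z_q)$; renaming $\zeta_0, \zeta_1$ as $z_0, z_1$ gives (\ref{eq_pr_sing}).

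I expect the main obstacle to be the parametrized Morse lemma together with the bookkeeping that ties the geometric ``ordinary double point'' hypothesis to the two analytic facts that drive the normal form: the nondegeneracy of the $(u_0, u_1)$-Hessian needed to run the splitting lemma, and the computation $\rk{d\pi_x} = q-1$. The latter in particular rests on knowing that the scheme-theoretic fibre is reduced at the node (so that its Zariski tangent space is exactly $2$-dimensional rather than larger), a point that should be extracted from the f.s.o.\ hypothesis and the smoothness of $X$; everything else is a routine application of the implicit function theorem and base/fibre coordinate changes.
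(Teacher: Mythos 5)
Your proof is correct, but note that there is nothing internal to compare it against: the paper states Proposition \ref{prop_coord} as a quoted result from Bismut--Bost \cite[Proposition 3.1]{BisBost} and gives no proof of its own. Your route --- case split on surjectivity of $d\pi_x$; identification of $\ker d\pi_x$ with the Zariski tangent space of the fibre, giving $\operatorname{rk}(d\pi_x)=q-1$ at a node; reduction to a two-variable function $\pi_1$ with parameters $z_2,\ldots,z_q$; the parametrized holomorphic Morse lemma; and absorption of the critical value $c(z_2,\ldots,z_q)$ by the base change $w_1\mapsto w_1-c(w_2,\ldots,w_q)$ --- is the standard normal-form argument and is complete modulo those two standard inputs (the splitting lemma, and the fact that a plane curve germ $\{f=0\}$ is a node exactly when $f$ has a nondegenerate critical point with critical value $0$). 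The caveat you raise at the end is genuine and is resolved exactly as you suggest: in the definition of an f.s.o.\ the fibre $X_t$ is the analytic space $\pi^{-1}(t)$ with structure sheaf $\mathscr{O}_X/\pi^*\mathfrak{m}_t\mathscr{O}_X$ (the paper's notion of curve is a property of the analytic space, cf.\ the definition recalled from \cite{Arb_v2}), so at a node the local ring is $\comp\{z_0,z_1\}/(z_0z_1)$, which is reduced with two-dimensional Zariski tangent space; this is what makes both your rank computation and the equivalence ``singular point $\Leftrightarrow$ $d\pi_x$ not surjective'' valid. Had one read ``fibre'' set-theoretically with its reduced structure, the statement would actually be false --- e.g.\ $\pi(z_0,z_1)=z_1^2$ has all set-theoretic fibres smooth, yet admits neither normal form (\ref{eq_pr_nonsing}) nor (\ref{eq_pr_sing}) along $\{z_1=0\}$ --- so this interpretive point is not mere bookkeeping, and your proof handles it correctly.
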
 
\begin{cor}[{\cite[\S 3(a)]{BisBost}}]\label{cor_sigma}
	Let $\Sigma_{X/S} \subset X$ be the locus of double points of the fibers of $\pi$. Then:
	\\ \hspace*{0.5cm} 	a) $\Sigma_{X/S}$ is a submanifold of $X$ of codimension $2$;
	\\ \hspace*{0.5cm} 	b) the map $\pi|_{\Sigma_{X/S}} : \Sigma_{X/S} \to S$ is a closed immersion;
	\\	 \hspace*{0.5cm}	c) the map $\pi|_{X \setminus \Sigma_{X/S}} : X \setminus \Sigma_{X/S} \to S$ is a submersion.
	\\
	In particular, the direct image $\Delta = \pi_*(\Sigma_{X/S})$ is a divisor in $S$.
\end{cor}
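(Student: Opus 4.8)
The plan is to deduce all four assertions from the two local normal forms of Proposition \ref{prop_coord}, reducing each to a direct computation in the coordinates $(z_0, \ldots, z_q)$ on $X$ and $(w_1, \ldots, w_q)$ on $S$. The key first step is to identify $\Sigma_{X/S}$ chart by chart. In a chart of type (\ref{eq_pr_nonsing}) the map $\pi$ is the projection $(z_0, z_1, \ldots, z_q) \mapsto (z_1, \ldots, z_q)$, so the fibre through $x$ is the smooth disc $\{z_1 = w_1, \ldots, z_q = w_q\}$ and $x$ is a smooth point; such a chart contains no double points. In a chart of type (\ref{eq_pr_sing}) the fibre over $(w_1, \ldots, w_q)$ is cut out by $z_0 z_1 = w_1$ together with $z_i = w_i$ for $i \geq 2$, which is smooth unless $w_1 = 0$, in which case $\{z_0 z_1 = 0\}$ has its unique double point at $z_0 = z_1 = 0$ and is smooth elsewhere. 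Hence near any $x$ the set $\Sigma_{X/S}$ is either empty or equals $\{z_0 = z_1 = 0\}$, a transverse intersection of two coordinate hypersurfaces; this proves a). Well-definedness causes no trouble, since being a double point of the fibre is an intrinsic analytic property of $x$, so the local descriptions automatically patch into a closed submanifold.

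For part c) I would compute the Jacobian of $\pi$ in each chart. In a chart (\ref{eq_pr_nonsing}) one has $d\pi = (dz_1, \ldots, dz_q)$, which is surjective onto $T_{\pi(x)}S$, so $\pi$ is a submersion throughout. In a chart (\ref{eq_pr_sing}) one computes $dw_1 = z_1\, dz_0 + z_0\, dz_1$ and $dw_i = dz_i$ for $i \geq 2$, so $d\pi$ attains maximal rank $q$ exactly when $(z_0, z_1) \neq (0,0)$, i.e.\ precisely off $\Sigma_{X/S}$. Combining the two charts shows $\pi$ is a submersion on $X \setminus \Sigma_{X/S}$, which is c).

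For part b) I would restrict a chart (\ref{eq_pr_sing}) to $\Sigma_{X/S} = \{z_0 = z_1 = 0\}$, on which $(z_2, \ldots, z_q)$ serve as coordinates and $\pi$ reads $(z_2, \ldots, z_q) \mapsto (0, z_2, \ldots, z_q)$, a biholomorphism onto the smooth hypersurface $\{w_1 = 0\} \subset S$. Thus $\pi|_{\Sigma_{X/S}}$ has injective differential everywhere and is locally an embedding; since $\Sigma_{X/S}$ is closed in $X$ and $\pi$ is proper, $\pi|_{\Sigma_{X/S}}$ is proper, giving the asserted closed immersion. Finally, as $\Sigma_{X/S}$ has codimension $2$ in $X$ and is immersed onto an analytic subset of $S$, its image has codimension $1$; properness makes this image a closed analytic hypersurface and $\pi|_{\Sigma_{X/S}}$ finite (its fibre over $t$ is the finite set of nodes of $X_t$), so the proper pushforward $\Delta = \pi_*(\Sigma_{X/S})$, counting nodes with multiplicity, is a divisor in $S$.

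Because Proposition \ref{prop_coord} already supplies the normal forms, the corollary is essentially the bookkeeping of these three coordinate computations. The only points requiring care are the rank computation in the singular chart for c), which must be checked to degenerate exactly on the node and nowhere else, and the passage in b) from the local embeddings onto $\{w_1 = 0\}$ to a genuine global divisor, where one uses properness of $\pi$ and finiteness of $\pi|_{\Sigma_{X/S}}$ to see that the pushforward cycle is well-defined and of codimension one.
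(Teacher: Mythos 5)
Your proposal is correct and is essentially the argument the paper intends: the corollary is stated with no separate proof precisely because it follows from the normal forms of Proposition \ref{prop_coord} by the three coordinate computations you carry out (identification of $\Sigma_{X/S}$ with $\{z_0 = z_1 = 0\}$ in a chart of type (\ref{eq_pr_sing}), the rank computation $dw_1 = z_1\,dz_0 + z_0\,dz_1$, and the restriction of $\pi$ to $\{z_0=z_1=0\}$ mapping onto $\{w_1=0\}$), which is exactly what the cited source \cite{BisBost} does. One caveat worth recording: properness plus local embedding yields only that $\pi|_{\Sigma_{X/S}}$ is a proper, finite immersion, not literally a closed embedding when some fiber carries several nodes (the map is then not injective, as your own description of the fibers of $\pi|_{\Sigma_{X/S}}$ shows); this looseness is inherited from the statement itself and is harmless for the final conclusion, since the pushforward cycle $\Delta = \pi_*(\Sigma_{X/S})$ counts nodes with multiplicity.
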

\begin{notat}
	We use the notation $\Delta$, $\Sigma_{X/S}$ for the divisor and the submanifold from Corrolary \ref{cor_sigma}.
\end{notat}
\par Let's recall the construction of the \textit{relative canonical} line bundle $\omega_{X/S}$ of a f.s.o. $\pi : X \to S$.  Define the sheaf $\Omega_{X/S}$ by the exact sequence:
\begin{equation}\label{ex_seq_1}
\pi^* \Omega_S \to \Omega_X \to \Omega_{X/S} \to 0.
\end{equation}
By Corollary \ref{cor_sigma}, the exact sequence (\ref{ex_seq_1}) becomes exact to the left when restricted to $X \setminus \Sigma_{X/S}$:
\begin{equation}\label{ex_seq_2}
0 \to \pi^* \Omega_S|_{X \setminus \Sigma_{X/S}} \to \Omega_X|_{X \setminus \Sigma_{X/S}} \to \Omega_{X/S}|_{X \setminus \Sigma_{X/S}} \to 0.
\end{equation}
By taking determinants of (\ref{ex_seq_2}), we deduce the isomorphism
\begin{equation}
	 \Omega_{X/S}|_{X \setminus \Sigma_{X/S}} = (\omega_X \otimes \pi^* \omega_S^{-1})|_{X \setminus \Sigma_{X/S}}.
\end{equation}
We define
\begin{equation}\label{eq_rel_can_defn}
	\omega_{X/S} := \omega_X \otimes \pi^* \omega_S^{-1}.
\end{equation}
Then $\omega_{X/S}$ is the unique extension of the line bundle $\Omega_{X/S}|_{X \setminus \Sigma_{X/S}}$ over $X$. This line bundle is called the \textit{relative canonical} line bundle of $\pi : X \to S$.
\par Let $x \in \Sigma_{X/S}$. Take local coordinates $(z_0, \ldots , z_q)$ on an open neighborhood $V$ of $x \in X$ and local coordinates $(w_1, \ldots, w_q)$ of $\pi(x) \in S$, as in (\ref{eq_pr_sing}).
Then the manifold $\Sigma_{X/S} \cap V$ is given by
\begin{equation}
	\{ z_0 = 0 \text{ and }  z_1 = 0\}.
\end{equation}
Consider the sections $dz_0/z_0$ and $dz_1/z_1$ of $\Pi_X$, defined over the sets $\{ z_0 \neq 0 \}$ and $\{ z_1 \neq 0 \}$ respectively. The images of $dz_0/z_0$ and $-dz_1/z_1$ in $\omega_{X / S}$ coincide over $\{z_0z_1 \neq 0\}$, since
\begin{equation}\label{eq_dz_0_dz_1_ident}
	\frac{dz_0}{z_0} + \frac{dz_1}{z_1} = \pi^* \frac{dw_1}{w_1}.
\end{equation}
Thus, they define a nowhere vanishing section $\sigma$ of $\omega_{X / S}$ over $V \setminus \Sigma_{X/S}$. Since $\Sigma_{X/S}$ is of codimension $2$, the section $\sigma$ extends to a nowhere vanishing section over $V$ of the line bundle $\omega_{X/S}$.
\par Now, let $s_0 := \pi(x) \in \Delta$, $x \in \Sigma_{X/S}$, and let $\rho : Y_{s_0} \to X_{s_0}$ be the normalization of $X_{s_0}$ at $x$. Then by the discussion above, there is the canonical isomorphism 
\begin{equation}\label{eq_can_pull_back_norm}
	\rho^* \omega_{X/S} = \omega_{Y_{s_0}} \otimes \mathscr{O}_{Y_{s_0}}(\rho^{-1}(x)),
\end{equation}
 which induces the isomorphism (\ref{isom_pull_back_twisted}).
 The identity (\ref{eq_dz_0_dz_1_ident}) implies that for the natural inclusion $i_{s_0} : X_{s_0} \to X$, the pull-back $(i_{s_0})^* \omega_{X/S}$ is canonically isomorphic to $\omega_{X_{s_0}}$.
\par Now let's fix disjoint sections $\sigma_1, \ldots, \sigma_m : S \to X \setminus \Sigma_{X/S}$ and a Hermitian metric $\norm{\cdot}^{\omega}_{X/S}$ on $\omega_{X / S}$ over $\pi^{-1}(S \setminus |\Delta|) \setminus ( \cup_i \Im(\sigma_i) )$, such that for any $t \in S \setminus |\Delta|$, the restriction of $\, \norm{\cdot}^{\omega}_{X/S}$ over $\pi^{-1}(t) \setminus (\cup_i \sigma_i(t))$ induces the Kähler metric $g^{TX_t}$ over $X_t \setminus (\cup_i \sigma_i(t))$ such that the associated triple $(X_t, \{ \sigma_1(t), \ldots, \sigma_m(t) \}, g^{TX_t})$ becomes a surface with cusps. As a short-cut, we call $(\pi; \sigma_1, \ldots, \sigma_m; \norm{\cdot}^{\omega}_{X/S})$ a f.s.c. (family of surfaces with cusps).
\par Now, let $(\xi, h^{\xi})$ be a Hermitian vector bundle over $X$. For $t \in S$, we denote
	\begin{multline}\label{eq_det_r_star}
		\det (R^{\bullet} \pi_* (\xi  \otimes \omega_{X/S}(D)^n))_t := \det H^{0}(X_t, \xi \otimes \omega_{X/S}(D)^n) \\
		\otimes (\det H^{1}(X_t, \xi  \otimes \omega_{X/S}(D)^n))^{-1}.
	\end{multline}
	By Grothendick-Knudsen-Mumford \cite{Knudsen1976} (cf. \cite[Proposition 4.1]{BisBost}), the family of complex lines $(\det (R^{\bullet} \pi_* (\xi  \otimes \omega_{X/S}(D)^n))_t)_{t \in S}$ is endowed with a natural structure of holomorphic line bundle $\det (R^{\bullet} \pi_* (\xi  \otimes \omega_{X/S}(D)^n))$ over $S$. We denote 
	\begin{equation}
		\lambda(j^*(\xi  \otimes \omega_{X/S}(D)^n)) := \big( \det (R^{\bullet} \pi_* (\xi  \otimes \omega_{X/S}(D)^n)) \big)^{-1}.
	\end{equation}
	\par Following \cite{FinII2}, the pointwise Quillen norms induce the Quillen norm $\norm{\cdot}_{Q} \big( g^{TX_t}, h^{\xi} \otimes \, \norm{\cdot}_{X/S}^{2n} \big)$ on the line bundle $\lambda(j^*(\xi  \otimes \omega_{X/S}(D)^n))$. 
	Similarly, the pointwise Wolpert norms glue into the Wolpert norm $\norm{\cdot}^W_{X/S}$ on $\otimes_{i = 1}^{m} \sigma_i^{*} \omega_{X/S}$.
	\begin{sloppypar}
	\begin{defn}\label{defn_quil_norm}
		The Quillen norm on the line bundle $\lambda(j^*(\xi  \otimes \omega_{X/S}(D)^n))$, $n \leq 0$ over $S \setminus |\Delta|$ is defined for $t \in S \setminus |\Delta|$ by 
		\begin{equation}\label{eq_defn_quil_norm}
			\norm{\cdot}_{Q} \big( g^{TX_t}, h^{\xi} \otimes \, \norm{\cdot}_{X/S}^{2n} \big)
			:= 
			 T \big( g^{TX_t}, h^{\xi} \otimes \, \norm{\cdot}_{X/S}^{2n} \big)^{1/2} \cdot
			\norm{\cdot}_{L^2} \big( g^{TX_t}, h^{\xi} \otimes \norm{\cdot}_{X/S}^{2n} \big).
		\end{equation}
	\end{defn}
	\end{sloppypar}
	Of course, one has to motivate Definition \ref{defn_quil_norm}. One of the motivations might look as follows: one can easily see that the jumps in the cohomology of the fibers, which might occur when one moves over the base space, would cause singularities in the $L^2$ norm.
	However, a special case of continuity theorem says
	\begin{thm}[{\cite[Theorem C3]{FinII2}}]\label{thm_cont}
		Suppose that assumption (\ref{suppos_s3}) holds. Then the Hermitian norm (\ref{quil_wol_norm}) on the line bundle (\ref{det_wol_prod}) extends continuously over $S$.
	\end{thm}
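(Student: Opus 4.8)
The plan is to prove the continuous extension by a local analysis of a single hyperbolic degeneration, in which the divergences of the four factors in (\ref{quil_wol_norm}) cancel by the numerology of the exponents $(12, -\rk{\xi}, \rk{\xi}, 6)$. Since continuity is a local statement, I would fix $t_0 \in |\Delta|$ and, using that $\Delta$ has normal crossings, shrink $S$ so that $\Delta$ is a union of coordinate hyperplanes; it then suffices to show that $\log \norm{e}_{\mathscr{L}_n}$ extends continuously across $|\Delta|$ for a local holomorphic frame $e$ of $\mathscr{L}_n$. The essential case is transversal approach to a smooth point of one component of $|\Delta|$, where a single node forms in the fiber and a single simple closed geodesic on the nearby smooth fibers is pinched; the remaining base directions and the higher-codimension strata (intersections of components) are recovered by treating each local degeneration parameter in turn, provided the transversal estimate is uniform in the other parameters.

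Next I would pass to a convenient model metric. The $L^2$-factor is insensitive to the choice of metric away from the cusps and the node, while for the analytic-torsion factor I would invoke the compact perturbation theorem (Theorem \ref{thm_comp_appr}) together with the anomaly formula (Theorem \ref{thm_anomaly_cusp}) to replace $\norm{\cdot}^{\omega}_{X/S}$ fiberwise by a flattening that is smooth at the cusps. Here assumption (\ref{suppos_s3}), which says that $\norm{\cdot}_{X/S}$ is good in the sense of Mumford with log-log growth along $\pi^{-1}(\Delta) \cup D_{X/S}$, does the crucial work: it guarantees that $c_1(\omega_{X/S}(D), \norm{\cdot}^2_{X/S})$ is locally integrable and that the correction integrals produced by these two formulas extend continuously across $t_0$, since the sections $\sigma_1, \ldots, \sigma_m$ avoid $\Sigma_{X/S}$ and vary holomorphically over $|\Delta|$.

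The analytic core is then the simultaneous asymptotics, as $t \to t_0$, of the $L^2$-norm on $\lambda(j^*(\xi \otimes \omega_{X/S}(D)^n))$ and of the regularized analytic torsion $T(g^{TX_t}, h^{\xi} \otimes \norm{\cdot}^{2n}_{X/S})$ under the pinching. For the $L^2$-part I would use the residue short exact sequence (\ref{eq_sh_ex_seq}), relating the cohomology of the singular fiber to that of its normalization, together with the degeneration of the Hodge structure, to extract the leading $\log|t|$ and power-of-$|t|$ behavior coming from the sections that become residues at the node. For the torsion I would use the definition through the regularized heat trace (\ref{eq_defn_rel_zeta})--(\ref{eq_defn_rel_tor}) and a Mayer--Vietoris/plumbing analysis of the determinant of the Laplacian under collar pinching, which produces the same type of logarithmic divergence; the point of introducing $\norm{\cdot}^W_{X/S}$ is that it records exactly the ambiguity of the Poincaré-compatible coordinate at the cusp, which renders these asymptotics canonical. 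One then checks that $\log(\norm{\cdot}_Q(g^{TX_t}, h^{\xi} \otimes \norm{\cdot}^{2n}_{X/S})^{12})$ minus $\rk{\xi} \log \norm{\cdot}^W_{X/S}$ plus $\rk{\xi} \log \norm{\cdot}^{\rm{div}}_{\Delta}$ has all of its $\log|t|$, $\log|\log|t||$ and $|t|^{\alpha}$ contributions cancel, leaving a finite limit depending continuously on $t$.

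I expect the main obstacle to be precisely this last analytic step, the control of the analytic torsion of the cusped, degenerating surface. The difficulty is twofold: the fiber is simultaneously non-compact, because of the cusps, and developing a long thin collar, because of the forming node, so one must disentangle the universal per-cusp contribution subtracted in (\ref{eq_defn_rel_tor}) from the genuinely new pinching contribution; and one must verify that the regularization is compatible with the degeneration, that is, that the factor $T_{TZ}(g^{TP}, \norm{\cdot}^{2n}_P)^{m \cdot \rk{\xi}/3}$ built into the definition does not interfere with the $\log|t|$ asymptotics of the collar. Once the leading pinching asymptotics of $T$ are isolated and shown to match the Wolpert and divisor contributions, continuity of $\norm{\cdot}_{\mathscr{L}_n}$ over $S$ follows from the exponent bookkeeping described above.
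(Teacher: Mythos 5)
The first thing to note is that the paper does not prove Theorem \ref{thm_cont} at all: it is quoted from \cite[Theorem C3]{FinII2}, and the present article takes that continuity statement as an input (it is then used, for instance, to reduce Theorem \ref{thm_imm_thm} to the case $\dim S = 1$, $|\Delta| = \{0\}$). So your proposal is not ``the same approach as the paper''; it is an attempt to reprove the cited result from scratch, and it has to be judged on its own merits.

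Judged that way, there is a genuine gap. Your plan correctly identifies the overall structure (localize to a one-parameter pinching, use Theorems \ref{thm_anomaly_cusp} and \ref{thm_comp_appr} to normalize the metric near the cusps, then study the joint asymptotics of the $L^2$-norm and of $T(g^{TX_t}, h^{\xi}\otimes\norm{\cdot}^{2n}_{X/S})$), but the decisive step --- showing that the $\log|t|$ and $\log|\log|t||$ divergences of $(\norm{\cdot}_Q)^{12}$, $(\norm{\cdot}^{W}_{X/S})^{-\rk{\xi}}$ and $(\norm{\cdot}^{\rm{div}}_{\Delta})^{\rk{\xi}}$ cancel --- is exactly the content of the theorem, and your proposal leaves it as ``one then checks''. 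A Mayer--Vietoris/plumbing analysis of the determinant under collar pinching is not available off the shelf for the operator at hand: the fibers are non-compact with cusps, the torsion is the regularized one of (\ref{eq_defn_rel_zeta})--(\ref{eq_defn_rel_tor}), and disentangling the universal cusp subtraction from the genuinely new collar contribution is precisely the hard analytic problem; naming it as the ``main obstacle'' does not discharge it. There is also a structural flaw in the reduction itself: flattening only modifies the metric near the cusps $|D_{X/S}|$ and does nothing near the forming node. Under assumption (\ref{suppos_s3}) the norm near $\pi^{-1}(\Delta)$ is merely good with log-log growth, not the restriction of a smooth metric on the total space, so after your reduction one still cannot quote degeneration results for smooth families (Bismut--Bost, or Theorem \ref{thm_bismut_rest}); an interpolation construction of the kind carried out in Section \ref{sect_final_sect} of this paper (comparing a smooth total-space metric $g^{TX}_{\sim}$ with a cylinder-model metric $g^{TX_t}_{\varkappa}$ via the anomaly formula and Proposition \ref{prop_int_loglog}) is needed to bridge that gap, and it is absent from your plan.
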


\subsection{Singular Hermitian vector bundles}\label{sect_sing_metr}
	In this section we recall several notions of singularities for Hermitian vector bundles.
	\par
	We work with a complex manifold $Y$ of dimension $q+1$, a \textit{normal crossing divisor} $D_0 \subset Y$ and a submanifold $F \subset Y$.
	\begin{defn}\label{defn_adapt_chart}
		A triple $(U; z_0, \ldots, z_q; l)$ of an open set $U \subset Y$, coordinates $z_0, \ldots, z_q : U \to \comp$ and $l \in \nat$ is called an \textit{adapted chart} for $D_0$ (resp. $F$) at $x \in D_0$ (resp. $x \in F$) if $U = \{ (z_0, \ldots, z_q) \in \comp^{q+1} : |z_i| < 1, \text{ for all } i = 0, \ldots, q \}$ and $D_0 \cap U$ (resp. $F \cap U$) is defined by $\{z_0 \cdots z_l = 0\}$ (resp. $\{z_0 = 0, \ldots, z_l = 0\}$).
	\end{defn}
	\begin{notat}
		Let $(U; z_0, \ldots, z_q; l)$ be an \textit{adapted chart} for $D_0$. We denote
		\begin{equation}
			d \zeta_k = 
			\begin{cases}
				\hfill dz_k/(z_k \log |z_k|^2), & \text{if} \quad 0 \leq k \leq l, \\
				\hfill dz_k, & \text{if} \quad l+1 \leq k \leq q.
			\end{cases}
		\end{equation}
	\end{notat}
	\begin{defn}\label{defn_loglog_gr}
		\par a) \cite[Definition 2.1]{FreixTh} A function $f : Y \setminus F \to \comp$ has \textit{log-log growth on $Y$, with singularities along $F$} if for any $x \in Y$, for some adapted chart $(U; z_0, \ldots, z_q; l)$ of $F$ at $x$, and for some $C > 0$, $p \in \nat$, we have
		\begin{equation}
			\textstyle |f(z_0, \ldots, z_q)| \leq C \Big( \log \big| \log \big( \max_{k = 0}^{l} \{ |z_k| \} \big) \big| \Big)^p  + C.
		\end{equation}
		\par b) \cite[p. 240]{MumHirz} A differential form over $Y \setminus D_0$ has \textit{Poincaré growth on $Y$, with singularities along $D_0$}, if it can be expressed as a linear combination of monomials constructed using $d \zeta_k, \overline{d \zeta_k}$, $k = 0, \ldots, q$ with coefficients $f \in \ccal^{\infty}(Y \setminus D_0) \cap L^{\infty}(Y \setminus D_0)$.
		\par c) \cite[Definition 2.14]{FreixTh} A smooth function $f: Y \setminus D_0 \to \comp$ is \textit{P-singular, with singularities along $D_0$}, if $\partial f$, $\dbar f$, $\partial \dbar f$ have Poincaré growth on $Y$, with singularities along $D_0$. 
	\end{defn}
	\begin{defn}[{\cite[p. 242]{MumHirz}}]\label{defn_crit_preloglog}
		Let $L$ be a holomorphic line bundle over $Y$ and let $h^L$ be a smooth Hermitian metric on $L$ over $Y \setminus D_0$.
		Then $h^L$ is good with singularities along $D_0$ if for every local holomorphic frame $\upsilon$ of $L$ over $U \subset Y$, the function $\log h^L(\upsilon, \upsilon)$ is P-singular, with singularities along $D_0$.
	\end{defn}
	\begin{rem}
		The original definition of Mumford differs from the one presented here. Their equivalence is proved in {\cite[Proposition 3.2]{FreixTh}}.
	\end{rem}
	\begin{defn}\label{defn_logloggwth}
		Let $L$ be a holomorphic line bundle over $Y$ and let $h^L$ be a smooth Hermitian metric on $L$ over $Y \setminus F$.
		Then $h^L$ has log-log growth with singularities along $F$ if for every local holomorphic frame $\upsilon$ of $L$ over $U \subset Y$, the function $\log h^L(\upsilon, \upsilon)$ has log-log growth with singularities along $F$.
	\end{defn}
	We fix a holomorphic, proper, surjective map $\pi: X \to S$ of complex manifolds, such that for every $t \in S$, the space $X_t := \pi^{-1}(t)$ is a curve. Suppose that the divisor of singular curves $\Delta$ has normal crossings. Let $D$ be a divisor on $X$ such that $\pi|_{D} : D \to S$ is a local isomorphism.
	\begin{prop}\label{prop_int_loglog}
	Let $\alpha$ be a differential $(1,1)$-form over $X \setminus (\Sigma_{X/S} \cup |D|)$, such that it has Poincaré growth on $X \setminus |D| \cup |\pi^{-1}(\Delta)|$ with singularities along $D \cup \pi^{-1}(\Delta)$, and the coupling of $\alpha$ with continuous vertical vector fields over $X \setminus (\Sigma_{X/S} \cup |D|)$ is continuous.
	Let $f : X \setminus (\Sigma_{X/S} \cup |D|) \to \real$ be a continuous function, with log-log growth along $\Sigma_{X/S} \cup |D|$.
	\par Then for the normalization $\rho : Y_t \to X_t$ of $X_t$, $t \in |\Delta|$, the form $\rho^*(f \alpha)$ is integrable over $Y_{t}$.
	Moreover, the function $\pi_{*}[f \alpha]$ extends continuously over $S$, and the value of this extension is
	\begin{equation}\label{eq_prop_cont_ext}
		\pi_{*}[f \alpha](t) = \tinyint_{Y_{t}} \rho^*(f \alpha).
	\end{equation}
\end{prop}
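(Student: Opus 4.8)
The plan is to localize the fiber integral and treat three regions separately: the generic part of the fibers, a neighborhood of the sections $|D|$, and a neighborhood of the nodal locus $\Sigma_{X/S}$, the last being the heart of the matter. The integrability of $\rho^*(f\alpha)$ over $Y_t$ reduces, near $\rho^{-1}(\Sigma_{X/S})$ and $\rho^{-1}(|D|)$, to a one-dimensional estimate: in a fiber coordinate $z$ vanishing at the puncture, the Poincaré growth of $\alpha$ forces $\rho^*\alpha$ to be bounded by $C\,\imun\, dz\wedge d\bar z/(|z|^2|\log|z||^2)$, while $f$ grows at most like $(\log|\log|z||)^p$; since $\int_0^\epsilon (\log|\log r|)^p\, r^{-1}(\log r)^{-2}\,dr<\infty$, the form is absolutely integrable, which gives the first assertion. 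For $t_0\notin|\Delta|$ the nearby fibers are smooth and $f\alpha$ is continuous on the total space off $|D|$ (by the coupling hypothesis), so ordinary fiber integration for submersions gives continuity of $t\mapsto\int_{X_t}f\alpha$, the contribution of a shrinking neighborhood of each section being uniformly small by the estimate above. The substance is therefore the limit as $t\to t_0\in|\Delta|$, which I will analyze locally around each node.

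Fix $x\in\Sigma_{X/S}$ and use the Bismut--Bost coordinates of Proposition \ref{prop_coord} in which $\pi$ reads $w_1=z_0z_1$. On the fiber $X_t=\{z_0z_1=s\}$ (writing $s$ for the value of $w_1$) the identity (\ref{eq_dz_0_dz_1_ident}) gives $dz_1/z_1=-dz_0/z_0$, so parametrizing the annulus $A_s=\{|s|<|z_0|<\delta\}$ by $z_0$ one has
\[
\alpha|_{X_t} = g\cdot \frac{\imun\, dz_0\wedge d\bar z_0}{|z_0|^2}, \qquad g := \alpha\big(z_0\partial_{z_0}, \overline{z_0\partial_{z_0}}\big).
\]
The vector field $z_0\partial_{z_0}=-z_1\partial_{z_1}$ is a holomorphic frame of $TX/S$, hence $g$ is one of the couplings assumed continuous on $X\setminus(\Sigma_{X/S}\cup|D|)$. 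Splitting $A_s$ along $|z_0|=\sqrt{|s|}$ (i.e. $|z_0|=|z_1|$), the outer half $\{\sqrt{|s|}\le|z_0|<\delta\}$ sweeps out, as $s\to0$, the punctured disk $\{0<|z_0|<\delta\}$ in the component $\{z_1=0\}$ of the normalization, and the inner half sweeps out the punctured disk in $\{z_0=0\}$; these two punctured disks are exactly $\rho^{-1}$ of the node. For fixed $z_0\neq0$ one has $z_1=s/z_0\to0$, so continuity of $f$ and of $g$ off $\Sigma_{X/S}$ yields the pointwise convergence $(fg)(z_0,s/z_0)\to(fg)(z_0,0)$.

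To pass to the limit under the integral I build an $s$-independent integrable majorant. On the outer half $|z_1|\le|z_0|$, so $\max(|z_0|,|z_1|)=|z_0|$ and $|\log|z_1||\ge|\log|z_0||$; hence the dominant Poincaré term gives $|g|\lesssim(\log|z_0|)^{-2}$, while the log-log growth of $f$ along $\Sigma_{X/S}$ (read in the adapted chart of Definition \ref{defn_loglog_gr} with $l=1$ and $\max(|z_0|,|z_1|)=|z_0|$) gives $|f|\lesssim(\log|\log|z_0||)^p+1$. The product $|fg|\,|z_0|^{-2}$ is thus bounded by $C(\log|\log|z_0||)^p(\log|z_0|)^{-2}|z_0|^{-2}$, which is integrable on $\{0<|z_0|<\delta\}$ and independent of $s$, so dominated convergence gives $\int_{\text{outer}}f\alpha\to\int_{\{z_1=0\}}\rho^*(f\alpha)$; the inner half is symmetric in $z_1$. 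The collar $|z_0|\approx|z_1|\approx\sqrt{|s|}$ carries negligible mass, since there $|g|\lesssim|\log|s||^{-2}$ and $|f|\lesssim(\log|\log|s||)^p$ while $\int_{\text{collar}}\imun\,dz_0\wedge d\bar z_0/|z_0|^2$ stays bounded, so its contribution is $O\big((\log|\log|s||)^p|\log|s||^{-2}\big)\to0$. Summing over the finitely many nodes and combining with the generic and $|D|$-contributions yields $\pi_{*}[f\alpha](t)\to\int_{Y_{t_0}}\rho^*(f\alpha)$, which is the value (\ref{eq_prop_cont_ext}) claimed for the continuous extension.

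I expect the node analysis to be the main obstacle: matching the two halves of the degenerating annulus to the two components of the normalization and showing that no mass is lost in the pinching collar. The two hypotheses play complementary and both essential roles: the continuity of the coupling $g$ identifies the pointwise limit of the integrand with the integrand on the normalized fiber, while the Poincaré and log-log growth supply the $s$-independent integrable majorant. The delicate point is to invoke, on each half of the annulus, the coordinate of \emph{larger} modulus in both the Poincaré bound for $g$ and the log-log bound for $f$ (so that the controlling variable is $\max(|z_0|,|z_1|)$), since the growth bounds alone do not pin down the limit and the continuity alone does not give domination.
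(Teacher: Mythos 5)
Your proposal is correct, but it cannot be ``the same approach as the paper'' in any meaningful sense, because the paper does not actually prove Proposition \ref{prop_int_loglog} on the spot: its entire proof is a citation, attributing the integrability assertion to \cite[Proposition 3.1c)]{FinII2} and stating that the continuous extension (\ref{eq_prop_cont_ext}) follows from the proof given there. What you have done is reconstruct, self-contained, the analytic content that the paper outsources, and the reconstruction is sound. The three pillars of your argument all hold up: (i) writing $\alpha|_{X_s}=g\,\imun\,dz_0\wedge d\bar z_0/|z_0|^2$ with $g$ the coupling of $\alpha$ against the holomorphic vertical frame $z_0\partial_{z_0}-z_1\partial_{z_1}$ (which is indeed vertical, since $dw_1(z_0\partial_{z_0}-z_1\partial_{z_1})=0$), so that the continuity hypothesis on couplings becomes pointwise convergence of the integrand to its value on the branches $\{z_1=0\}$ and $\{z_0=0\}$ of the normalization; (ii) the bookkeeping in the larger modulus on the half $|z_1|\le|z_0|$, where $|d\zeta_i(v)|\le\tfrac{1}{2}|\log|z_0||^{-1}$ for both $i=0,1$ and the adapted-chart log-log bound reads in $\max(|z_0|,|z_1|)=|z_0|$, giving the $s$-independent majorant $(\log|\log|z_0||)^p\,|\log|z_0||^{-2}|z_0|^{-2}$; (iii) dominated convergence on each half, with the same majorant handling both integrability on $Y_t$ and the contribution near $|D|$. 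This is almost certainly the same mechanism as in the cited proposition; the value of your version is that it makes visible exactly where each hypothesis enters, which the paper's two-line proof hides.

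Three small repairs. First, the annulus cut out of $X_s$ by the chart $\{|z_0|<\delta,\ |z_1|<\delta\}$ is $\{|s|/\delta<|z_0|<\delta\}$, not $\{|s|<|z_0|<\delta\}$. Second, once you split exactly at $|z_0|=\sqrt{|s|}$ and apply dominated convergence on each half, there is no third region: the separate collar estimate is a harmless sanity check but carries no part of the load and should not be presented as if it did. Third, the statement asks that the extension be continuous over all of $S$; when $\dim S>1$ and $\Delta$ has several branches, this includes approach along $|\Delta|$ itself (say $t_n\in|\Delta_1|$ tending to $t_0\in|\Delta_1|\cap|\Delta_2|$), whereas your limit analysis as written parametrizes only fibers with $s\neq 0$. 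Your estimates do cover this case, because the majorant and the pointwise convergence are uniform in the transverse base coordinates $w_2,\ldots,w_q$ suppressed in your notation (near a $\Delta_1$-node the degenerating annulus is simply replaced by two fixed punctured disks varying continuously), but the point deserves a sentence.
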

\begin{proof}
	The first part of the statement was proved in {\cite[Proposition 3.1c)]{FinII2}}. The second statement follows directly from the proof of {\cite[Proposition 3.1c)]{FinII2}}.
\end{proof}

\subsection{Families of hyperbolic surfaces and Quillen metric}\label{sect_hyp_surf}
	In this section we recall the results of Freixas from \cite{FreixasARR} and \cite{FreixARRgen}, which describe how the Quillen metric, defined using the version of analytic torsion due to Takhtajan-Zograf (see (\ref{eqn_sel_norm})) on stable surfaces endowed with constant scalar curvature $-1$, behaves in a degenerating family.
	The family he considers is a special case of plumbing family construction, which is originally due to Wolpert \cite[p. 434]{Wolp90}.
	\par 
	Let's describe this construction.
	We fix a Riemann surface $\overline{M}$ with $m$ fixed points $D_M = \{P^M_{1}, \ldots, P^M_{m} \} \subset \overline{M}$ and a Riemann surface $\overline{T}$ of genre $1$ with one fixed point $D_T = \{ P^T \} \subset \overline{T}$.
	Take $m$ copies $(\overline{T}_i, P^T_{i})$ of $(\overline{T}, P^T)$.
	Let $g \in \nat$ be the genus of $\overline{M}$. 
	Clutching morphisms $\beta$ (see (\ref{eq_clutch_morph})), applied to the pairs of points $\{ P^M_{1}, P^T_{1} \}, \ldots, \{ P^M_{m}, P^T_{m} \}$, realizes the pointed surface $(M, m \cdot T) := (\overline{M}, D_M) \cup (\overline{T}_1, P^T_{1}) \cup \cdots \cup (\overline{T}_m, P^T_{m})$ as a point in a compactifying divisor $\partial \modul_{g+m, 0}$ of $\modulcomp_{g+m, 0}$. The \textit{plumbing family} associated with  $(M, m \cdot T)$ is a family of pointed curves representing a transversal direction to $\partial \modul_{g+m, 0}$ in $\modulcomp_{g+m, 0}$.
	More precisely, we consider
	\begin{enumerate}
		\item Neighborhood $U_i$ of $P^M_{i} \in \overline{M}$, $i = 1, \ldots, m$ biholomorphic to an open disc and a holomorphic coordinate mappings $F_i : U_i \to \comp$ with $F_i(P^M_{i}) = 0$;
		\item Similarly, a neighborhood $V$ of $P^T \in \overline{T}$, and a holomorphic coordinate mapping $G : V \to \comp$ satisfying $G(P^T) = 0$;
		\item A small complex parameter $t \in \comp$.
	\end{enumerate}
	We suppose that $U_i$ are pairwise disjoint. 
	Let $c > 0$ be such that $D(c) \subset \comp$ is contained in $\Im (F_i)$, $i = 1, \ldots, m$ and $\Im (G)$. 
	We take $m$ copies $G_1, \ldots, G_m$ of $G$, and regard them as functions acting on $\overline{T}_1, \ldots, \overline{T}_m$ respectively.
	Let $|t| < c^2$.
	For $d \in D(c)$, we note
	\begin{equation}\label{defn_rd}
		R^{d, *} = \big( \overline{M} \setminus (\cup_{i = 1}^{m} \{ |F_i| \leq |d| \}) \big) \cup \big( \overline{T}_1 \setminus \{ |G_1| \leq  |d| \} \big) \cup \cdots \cup \big( \overline{T}_m \setminus \{ |G_m| \leq  |d| \} \big).
	\end{equation}
	\begin{sloppypar}
		Consider the equivalence relation on points of $R^{t/c, *}$ generated by: 
		\begin{equation}
			\text{$p \sim q$ if $|t|/c \leq |F_i(p)| \leq c $, $|t|/c \leq |G_i(q)| \leq c$, $F_i(p)G_i(q) = t$.}
		\end{equation}
		 Form the identification space $X_t = R^{t/c, *} / \sim$. 
		 The curve $X_t$, $t \in D(c^2)$, is called the \textit{plumbing construction} for $(M, m \cdot T)$ associated with the \textit{plumbing data} $(\cup_i U_i, V, \cup_i F_i, G, t)$. Trivially, we see that a set $X := \cup_{t \in D(c^2)} X_t$ can be endowed with a structure of a complex manifold, for which $\pi : X \to S := D(c^2)$ is a proper holomorphic map of codimension 1.
		 This construction is also called the \textit{Bers trick} for $\overline{M}$ (cf. \cite{BersTrick}, \cite[Construction 4.3.2]{FreixTh}).
	\end{sloppypar}
			 \begin{figure}[h]
			\centering
			\includegraphics[width=0.8\textwidth]{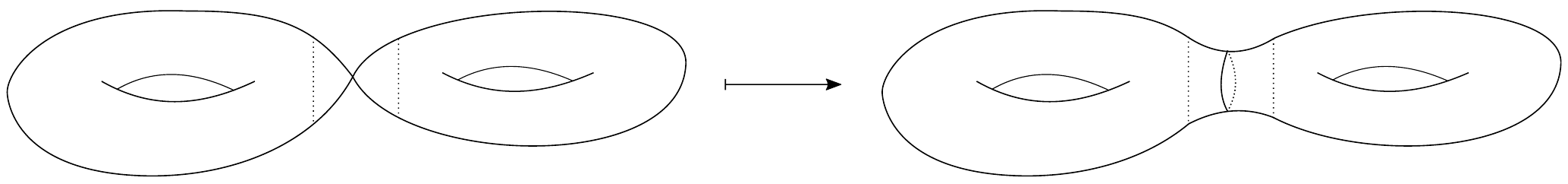}	
			\caption{Plumbing family. The regions away from the dashed lines are isomorphic.
			}
			\label{fig_graft}
		\end{figure}
	 Now, suppose that the pointed surface $(\overline{M}, D_M)$ is stable, i.e. it satisfies (\ref{cond_stable}), and let $g^{TM}_{\rm{hyp}}$ be the canonical complete hyperbolic metric of constant scalar curvature $-1$ on $\overline{M} \setminus D_{M}$ with cusps at $D_M$. Then one can take the functions $F_i$, $i = 1, \ldots, m$, from the plumbing construction to be Poincaré-compatible coordinates $z^M_{i}$ of $P^M_{i}$ (see (\ref{reqr_poincare})). 
	 Similarly, we make the choice for $G_i = z^T$. 
	 The plumbing family associated to this plumbing data is called the \textit{canonical plumbing family}. 
	 \par From now on, we fix the canonical plumbing family $\pi: X \to S := D(c^2)$. Then the divisor of singular curves is given by $\Delta = m \cdot \{0\}$.
	 We denote by 
	 \begin{equation}
	 	\rho : Y_0 := (\overline{M} \cup \overline{T}_1 \cup \cdots \cup \overline{T}_m) \to X_0
	 \end{equation}
	 the normalization of the singular fiber.
	 We denote by 
	 \begin{equation}
	 	\Sigma_{X/S} = \{Q_1, \ldots, Q_m \}, \quad Q_i = \rho(P_i^{M}), 
	 \end{equation}
	 the set of singular points in $X_0$.
	  Let $g^{TX_t}_{\rm{hyp}}$, $t \neq 0$ be the canonical hyperbolic metric of constant scalar curvature $-1$ on $X_t \setminus D_{X_t}$ with cusps at $D_{X_t}$. 
	 We denote by $Z_{X_t}(s)$ the Selberg zeta-function associated with $X_t$, given by the formula (\ref{defn_sel_zeta}). 
	 Let $\norm{\cdot}_{X/S}^{\rm{hyp}}$ be the Hermitian norm on $\omega_{X/S}(D)$ over $X \setminus (\pi^{-1}(|\Delta|) \cup |D_{X/S}|)$, induced from $g^{TX_t}_{\rm{hyp}}$ by Construction \ref{const_norm_div}.
	 \par 
	 We consider the determinant line bundle $\lambda(j^*(\omega_{X/S}(D)^n))$, $n \leq 0$, and we endow it over $S \setminus \Delta$ with the Takhtajan-Zograf version of Quillen norm (cf. \cite[\S 6]{FreixTh}), given by (compare with (\ref{eq_defn_quil_norm}))
	\begin{equation}\label{eq_tz_defn_quil_norm}
		\norm{\cdot}_{Q}^{TZ} \big( g^{TX_t}_{\rm{hyp}}, (\, \norm{\cdot}_{X/S}^{\rm{hyp}})^{2n} \big)
		:= 
		 T_{TZ} \big( g^{TX_t}_{\rm{hyp}}, (\, \norm{\cdot}_{X/S}^{\rm{hyp}})^{2n} \big)^{1/2} \cdot
		\norm{\cdot}_{L^2} \big( g^{TX_t}_{\rm{hyp}}, (\, \norm{\cdot}_{X/S}^{\rm{hyp}})^{2n} \big).
	\end{equation}
	We construct the norm (compare with (\ref{quil_wol_norm}))
	\begin{equation}\label{quil_tz_wol_norm}
		\norm{\cdot}_{\mathscr{L}_n}^{TZ} := \big( \norm{\cdot}_{Q}^{TZ} ( g^{TX_t}_{\rm{hyp}}, (\, \norm{\cdot}_{X/S}^{\rm{hyp}})^{2n} ) \big)^{12} 
		\otimes 
		\norm{\cdot}^{\rm{div}}_{\Delta}
	\end{equation}				
	on the line bundle (compare with (\ref{det_wol_prod}))
	\begin{equation}\label{det_tz_wol_prod}
		\mathscr{L}_n^{TZ} :=  
		\lambda \big( j^* (\omega_{X/S}^n) \big)^{12} 
		\otimes 
		\mathscr{O}_S(\Delta).
	\end{equation}
	\par We denote by $\norm{\cdot}_M^{\rm{hyp}}$, $\norm{\cdot}_T^{\rm{hyp}}$ the norms on $\omega_M(D)$, $\omega_T(D)$ induced by the canonical hyperbolic metrics $g^{TM}_{\rm{hyp}}$, $g^{TT}_{\rm{hyp}}$ of constant scalar curvature $-1$ on $\overline{M} \setminus D_{M}$, $\overline{T} \setminus D_{T}$ with cusps at $D_M$ and $D_T$ respectively.
	We denote by $\norm{\cdot}^{W, \rm{hyp}}_{M}$, $\norm{\cdot}^{W, \rm{hyp}}_{T}$ the associated Wolpert norms on the complex lines $\det (\omega_{\overline{M}}|_{D_M})$ and $\det (\omega_{\overline{T}}|_{D_T})$.
	Now, we define the norm (compare with (\ref{quil_wol_norm_rest}))
		\begin{multline}\label{quil_tz_wol_norm_rest}
			\norm{\cdot}_{\mathscr{L}'_n}^{TZ} := 
			\Big( 
			\norm{\cdot}_Q^{TZ} (g^{TM}_{\rm{hyp}}, (\, \norm{\cdot}_{M}^{\rm{hyp}})^{2n}) 
			\otimes 
			\big( \norm{\cdot}_Q^{TZ} (g^{TT}_{\rm{hyp}}, (\, \norm{\cdot}_{T}^{\rm{hyp}})^{2n}) \big)^{m} 
			\Big)^{12} 
			\\
			\otimes \big( \norm{\cdot}^{W, \rm{hyp}}_{M} \otimes (\,  \norm{\cdot}^{W, \rm{hyp}}_{T})^{m} \big)^{-1}
		\end{multline}				
		on the complex line (compare with (\ref{det_wol_prod_rest}))
		\begin{equation}\label{det_tz_wol_prod_rest}
			\mathscr{L}^{TZ}_n{}' :=  
			\Big( 
			\lambda \big(\omega_{M}(D)^n \big)
			\otimes 
			\lambda \big(\omega_{T}(D)^n \big)^{m} 
			\Big)^{12} 
			\otimes 
			\Big(
			\det (\omega_{\overline{M}}|_{D_M}) 
			\otimes 
			( \det (\omega_{\overline{T}}|_{D_T}))^{m} 
			\Big)^{-1}.
		\end{equation}
		Then the isomorphism (\ref{isom_main}) gives in our case the canonical isomorphism
		\begin{equation}\label{isom_tz_main}
			\mathscr{L}_n^{TZ}|_{\Delta} \to \mathscr{L}^{TZ}_n{}' 
			\otimes 
			\big(
			\otimes_{i = 1}^{m}			
				\mathscr{O}_{Q_{i}}
			\big)^{12}.
		\end{equation}
		We recall that $C_k$, $k \in \nat$ were defined in (\ref{defn_C_k}).
		The main theorem of this section is
		\begin{thm}[Freixas, {\cite[Corollary 5.8]{FreixasARR}, \cite[Theorem 5.3]{FreixasARR}}]\label{thm_tz_imm_thm}
			The norm $\norm{\cdot}_{\mathscr{L}_n}^{TZ}$ extends continuously over $S$, and, under the isomorphism (\ref{isom_tz_main}), the following identity holds
			\begin{equation}
					\norm{\cdot}_{\mathscr{L}_n}^{TZ}|_{\Delta} = \exp(m \cdot C_{-n}) \cdot \norm{\cdot}_{\mathscr{L}_n{}'}^{TZ}.
			\end{equation}
		\end{thm}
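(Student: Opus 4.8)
The plan is to reduce the statement to a single one-parameter limit $t \to 0$ and to exploit the product structure of the plumbing family over the $m$ nodes. Since the neighborhoods $U_i$ are pairwise disjoint and each node is glued independently with the same parameter $t$, the degeneration decouples to the relevant order: every asymptotic contribution attached to the $i$-th node is, up to higher-order corrections, identical and independent, so it suffices to analyze one node and raise the resulting constant to the power $m$. Concretely, I would trivialize $\mathscr{L}_n^{TZ}$ by a fixed holomorphic frame near $t=0$, reducing the problem to computing the $t \to 0$ limit of a product of three explicit real functions of $t$: the Takhtajan--Zograf torsion factor $T_{TZ}(g^{TX_t}_{\rm{hyp}}, (\norm{\cdot}^{\rm{hyp}}_{X/S})^{2n})^6$, the twelfth power of the $L^2$-norm, and the divisor norm $\norm{\cdot}^{\rm{div}}_{\Delta}$, which in the frame $s_\Delta \leftrightarrow t^m$ contributes the factor $|t|^{-m}$ up to a smooth unit.

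Next I would set up the degeneration geometry. As $t \to 0$ the metric $g^{TX_t}_{\rm{hyp}}$ converges, away from the nodes, to the hyperbolic metric of the normalization $Y_0 = \overline{M} \cup \overline{T}_1 \cup \cdots \cup \overline{T}_m$, while the thin collar around each pinching geodesic $\gamma_i$ opens into a pair of cusps, one at $P^M_i \in \overline{M}$ and one at $P^T_i \in \overline{T}_i$. The crucial quantitative input is the Wolpert asymptotic relating the plumbing parameter to the length $\ell_i = \ell(\gamma_i)$ of the pinching geodesic, $\ell_i = -2\pi^2/\log|t| + o(1/\log|t|)$; this is what converts the transcendental $\ell$-asymptotics of the Selberg zeta into genuine powers of $|t|$ and $\log|t|$. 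I would also note that the Wolpert norms $\norm{\cdot}^{W,\rm{hyp}}_M$ and $\norm{\cdot}^{W,\rm{hyp}}_T$ are precisely the objects tracking the Poincaré-compatible coordinates across the collar, so their presence in $\norm{\cdot}_{\mathscr{L}_n'}^{TZ}$ is forced by this collar geometry.

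The analytic heart is the asymptotic of the Selberg zeta under pinching. I would factor $Z_{X_t}(s) = \big(\prod_{i=1}^m Z_{\gamma_i}(s;\ell_i)\big)\cdot \widetilde{Z}(s;t)$, where $Z_{\gamma_i}(s;\ell) = \prod_{k\geq 0}(1-e^{-(s+k)\ell})^2$ is the contribution of the pinching class and its iterates, and then show, using uniform convergence of the metric off the collars together with control of the geodesics crossing them, that $\widetilde{Z}(s;t) \to Z_{\overline{M},D_M}(s)\prod_i Z_{\overline{T}_i,D_{T_i}}(s)$ (for $s$ in the absolutely convergent region, which contains the evaluation point $s=1-n$ when $n<0$; for $n=0$ one works at the simple zero $s=1$ and tracks derivatives). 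The divergent factor is treated by an Euler--Maclaurin/Mellin expansion of $\sum_k \log(1-e^{-(s+k)\ell})$, whose leading term $-\pi^2/(6\ell)$ yields, via the $t$-$\ell$ relation, a power of $|t|$ cancelling the divisor norm, while the subleading $\log\ell$ and constant terms generate explicit Gamma/factorial constants. Simultaneously, the degeneration of $\norm{\cdot}_{L^2}^{12}$ is governed by the gluing exact sequence (\ref{eq_sh_ex_seq}) and Wolpert's formula for the $L^2$-metric of the Hodge bundle under pinching, so that $\norm{\cdot}_{L^2}^{12}\otimes(\norm{\cdot}^{W,\rm{hyp}})^{-1}$ has a finite limit equal to the corresponding $L^2$/Wolpert data on $Y_0$; this is exactly the continuity already guaranteed in the general setting by Theorem \ref{thm_cont}, so here only the limiting value remains to be identified.

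The main obstacle is pinning the universal constant to be exactly $\exp(m\,C_{-n})$ rather than merely producing a finite universal limit: this demands the exact finite parts — not just leading orders — of the Selberg pinching expansion, combined with the constants $B_{-n}$, $E$ of (\ref{eqn_sel_norm}) and the $L^2$/Wolpert asymptotics, each tracked to order $O(1)$. The cleanest route I would take is to first establish, by the steps above, that $\norm{\cdot}_{\mathscr{L}_n}^{TZ}|_{\Delta} = \exp(m\,\kappa_{-n})\cdot\norm{\cdot}_{\mathscr{L}_n'}^{TZ}$ for some universal $\kappa_{-n}$ independent of $\overline{M}$, and then to determine $\kappa_{-n}$ by specializing to a single explicit family where both sides are computable in closed form — for instance the degeneration of the once-punctured torus, whose singular fiber normalizes to the thrice-punctured sphere, the model surface whose Takhtajan--Zograf torsion is built into the very normalization in (\ref{eq_defn_rel_tor}). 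Matching the two closed-form expressions then forces $\kappa_{-n} = C_{-n}$.
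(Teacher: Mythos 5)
First, a structural remark: the paper does not prove this statement at all. It is quoted as an external result of Freixas \cite[Corollary 5.8, Theorem 5.3]{FreixasARR}, and the paper's logic actually runs in the opposite direction from yours: it first proves universality of a restriction constant $A_{-n}$ by independent means (Bismut's smooth-metric degeneration theorem plus compact perturbation), and then uses \emph{this} theorem, together with Phong--D'Hoker, to conclude $A_{-n}=C_{-n}$. So your attempt must be measured against Freixas's actual proof, which rests on the degeneration asymptotics of hyperbolic metrics and Selberg zeta functions due to Wolpert and Jorgenson--Lundelius. The first two-thirds of your sketch (decoupling of the $m$ nodes, collar factorization of $Z_{X_t}$, Wolpert's relation $\ell_i\sim 2\pi^2/|\log|t||$, Euler--Maclaurin treatment of the pinching factor, continuity of the $L^2$/Wolpert part) does reproduce the correct shape of that argument, although the convergence of the ``non-pinching'' part of the zeta function and the $n=0$ case are much harder than you indicate: as $t\to 0$ small eigenvalues appear, i.e.\ zeros of $Z_{X_t}$ accumulate at $s=1$, so ``working at the simple zero and tracking derivatives'' is not a routine step but a central difficulty handled by Wolpert's spectral degeneration theory.

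The genuine gap is your strategy for pinning the constant. You propose to establish a universal $\kappa_{-n}$ and then determine it ``by specializing to a single explicit family where both sides are computable in closed form.'' No such family exists. Selberg zeta special values $Z'_{X}(1)$ and $Z_{X}(1-n)$ admit no closed-form evaluation for \emph{any} hyperbolic surface — in particular not for the once-punctured torus nor for the thrice-punctured sphere — so matching the two sides of your proposed test case produces an identity between equally unknown transcendental quantities and yields no equation for $\kappa_{-n}$. (The normalization (\ref{eq_defn_rel_tor}) builds the thrice-punctured sphere's torsion into the definition; it does not make that torsion explicit.) Moreover, the family you suggest — a once-punctured torus pinching its handle — is not of the form covered by the statement: the plumbing families here have \emph{closed} generic fibers of genus $g+m$, while yours has a punctured generic fiber. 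The explicit value of $C_{-n}$ in (\ref{defn_C_k}), with its powers of $2$ and $\pi$ and the term $\log((2k)!)$, arises precisely from the finite parts you hoped to avoid: the subleading term in the $t$--$\ell$ relation, the constant and $\log\ell$ terms in the expansion of $\sum_k\log(1-e^{-(s+k)\ell})$, the small-eigenvalue product for $n=0$, and the $O(1)$ part of the $L^2$-norm degeneration. Extracting these finite parts \emph{is} the proof; the specialization trick cannot substitute for it. (An alternative non-circular route would be to compute the paper's constants $A'_{-n}+A''_{-n}-A'''_{-n}$ from Section \ref{sect_final_sect} directly, but those integrals are never evaluated in closed form in the paper either — the paper avoids doing so exactly by invoking Freixas.)
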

	 	\begin{rem}
	 		Theorem \ref{thm_tz_imm_thm} corresponds exactly to Theorem \ref{thm_imm_thm} for a special choice of a family of curves, a special choice of the metric and different choice of the definition of the analytic torsion.
	 	\end{rem}
	\subsection{Model grafting and pinching expansion}\label{sect_gen_gft}
	The goal of this section is to recall model grafting construction and the pinching expansion of the hyperbolic metric due to Wolpert \cite{Wolp90}. For simplicity, we state his results only for the plumbing family considered in Section \ref{sect_hyp_surf}.
	We conserve the notation from Section \ref{sect_hyp_surf}.
	\par To be compatible with further notation, we denote
	\begin{equation}
		z^i_{0} := z^M_{i}, \qquad z^i_{1} :=  z^T_{i}.
	\end{equation}
	By the definition of plumbing family, the coordinates $(z^i_{0}, z^i_{1})$ serve as local holomorphic charts in the neighborhood $Q_i \in X$.
	We denote
	\begin{equation}
		U(Q_i, \epsilon) = \Big\{ x \in X :  |z^i_{0}(x)| < \epsilon,   |z^i_{1}(x)| < \epsilon \Big\}.
	\end{equation}
	Again, by the definition of plumbing family, in $t$-coordinates on $S$, we have
	\begin{equation}\label{eq_proj_gen_gft}
		\pi(z^i_{0}, z^i_{1}) = z^i_{0} z^i_{1}.
	\end{equation}
	\par The canonical hyperbolic metric on $M$ (resp. $T$) with cusps at $D_M$ (resp. $D_T$) induces a metric $g^{TR^{\epsilon}}_{{\rm{hyp}}}$ on $R^{\epsilon}$ (see (\ref{defn_rd}) for the definition of $R^{\epsilon}$). 
	Let $\epsilon$ be so small, so that $g^{TR^{\epsilon}}_{{\rm{hyp}}}$ is induced by 
	\begin{equation}
	\begin{aligned}
		& \frac{\imun d z^M_{j} d \overline{z}^M_{j} }{| z^M_{j} \log |z^M_{j}| |^2},  
      			&& \text{ over } \big\{ |z^M_{j}| < 2 \epsilon \big\}, \\
      	& \frac{\imun d z^T_{j} d \overline{z}^T_{j} }{| z^T_{j} \log |z^T_{j}| |^2},  
      			&& \text{ over } \big\{ |z^T_{j}| < 2 \epsilon \big\}. \\
	\end{aligned}
	\end{equation}
	\par We choose $c = \epsilon^2$ in the plumbing construction from Section \ref{sect_hyp_surf}.
	 Now, since the manifold $X \setminus (\cup_{i = 1}^{k} U(Q_i, \epsilon))$ is naturally isomorphic to the product $R^{\epsilon} \times D(\epsilon^2)$, the metric $g^{TR^{\epsilon}}_{{\rm{hyp}}}$ induces the Kähler metric $g^{TX_t}$ on $X_t \setminus (\cup_{i = 1}^{k} U(Q_i, \epsilon))$.
	\par 
	The \textit{model grafted metric} is built from the metric $g^{TX_t}$ and the \textit{hyperbolic metric on a cylinder}, see (\ref{defn_g_cyl1}).
	It is meant to be a simplified model of degeneration of metric, appearing in the degeneration of hyperbolic surfaces.
	More precisely, let $\nu : X \to [0,1]$ be smooth function, satisfying
	\begin{equation}\label{eq_defn_nu}
		\nu(x) = 
		\begin{cases} 
      		\hfill 0, & \text{ for } x \in X \setminus (\cup_{i = 1}^{k} U(Q_i, 2\epsilon)), \\
      		\hfill 1,  & \text{ for } x \in \cup_{i = 1}^{k} U(Q_i, \epsilon). 
 		\end{cases}
	\end{equation}
	For $t \in D(\epsilon^2)$, we denote by $g^{\text{Cyl}}_{i, t}$ the metric over the set
	\begin{equation}
		\big\{(z^i_{0}, z^i_{1}) \in X_t : |t| / (2 \epsilon) < |z^i_{0}| < 2 \epsilon \big\},
	\end{equation}		
	induced by the Kähler form
	\begin{equation}\label{defn_g_cyl1}
		\bigg( 
			\frac{\pi}{|z^i_{0}| \log |t|}
			\bigg(
				\sin 
				\frac{\pi \log |z^i_{0}|}{\log |t|}
			\bigg)^{-1}		
		\bigg)^2
		\imun
		d z^i_{0} d \overline{z}^i_{0}.
	\end{equation}
	We remark that due to the fact that over $X_t$, we have $z^i_{0} z^i_{1} = t$, the expression (\ref{defn_g_cyl1}) is symmetric with respect to the change of variables $z^i_{0} \leftrightarrow z^i_{1}$.
	\begin{sloppypar}
		 Following Wolpert \cite{Wolp90}, we define the \textit{model grafted metric} $g^{TX_t}_{\text{gft}}$ as follows: over $X_t \setminus (\cup_{i = 1}^{m} U(Q_i, 2 \epsilon))$, $g^{TX_t}_{\text{gft}}$ coincides with $g^{TX_t}$, and over $U(Q_i, 2 \epsilon)$, it is given by
	\begin{equation}\label{defn_gen_gft}
			g^{TX_t}_{\text{gft}} = 
      		\big(
      			g^{\text{Cyl}}_{i,t}
      		 \big)^{\nu}
      		 (g^{TX_t})^{1-\nu}.
	\end{equation}
	This metric has the following nice properties
	\begin{prop}\label{prop_gft_good}
		The metric $\norm{\cdot}_{X/S}^{\text{gft}}$ induced by $g^{TX_t}_{\text{gft}}$ over $X \setminus \pi^{-1}(|\Delta|)$ extends continuously over  $X \setminus \Sigma_{X/S}$. Moreover, it is good in the sense of Mumford on $X \setminus \pi^{-1}(|\Delta|)$ with singularities along $\pi^{-1}(\Delta)$ and has log-log growth with singularities along $\Sigma_{X/S}$.
	\end{prop}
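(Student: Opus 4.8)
The plan is to reduce all three assertions to an explicit local computation of the norm of the canonical frame of $\omega_{X/S}$, carried out separately on the three regions on which $g^{TX_t}_{\text{gft}}$ is built. In the plumbing family of Section~\ref{sect_hyp_surf} the marked points are consumed by the nodes, so $D_{X/S}=\emptyset$ and it suffices to treat $\norm{\cdot}_{X/S}^{\text{gft}}$ on $\omega_{X/S}$. Near a node $Q_i$, by (\ref{eq_dz_0_dz_1_ident}) the section $\sigma = dz_0^i/z_0^i = -dz_1^i/z_1^i$ is a nowhere-vanishing holomorphic frame of $\omega_{X/S}$, so every claim becomes an estimate on the single function $f := 2\log\norm{\sigma}_{X/S}^{\text{gft}}$. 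On $X\setminus\cup_i U(Q_i,2\epsilon)$, where $\nu\equiv 0$, the grafted metric equals $g^{TX_t}$, which under the isomorphism $X\setminus\cup_i U(Q_i,\epsilon)\cong R^\epsilon\times D(\epsilon^2)$ is the pullback of the fixed smooth hyperbolic metric on $R^\epsilon$; hence there $f$ is smooth and $t$-independent (in particular continuous, and trivially good with singularities along $\pi^{-1}(\Delta)$), and this region avoids $\Sigma_{X/S}$.

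On the cylinder region $U(Q_i,\epsilon)$, where $g^{TX_t}_{\text{gft}}=g^{\text{Cyl}}_{i,t}$, dualizing (\ref{defn_g_cyl1}) gives, with $u=\log|z_0^i|$, $v=\log|z_1^i|$ and $s=u+v=\log|t|$,
\begin{equation*}
\big(\norm{\sigma}_{X/S}^{\text{gft}}\big)^2 = \frac{s^2}{\pi^2}\sin^2\!\Big(\frac{\pi u}{s}\Big).
\end{equation*}
First I would read off continuity: approaching a point of $\pi^{-1}(\Delta)\setminus\Sigma_{X/S}$ inside $U(Q_i,\epsilon)$, say $z_1^i\to 0$ with $z_0^i\to z_0^0\neq 0$, one has $u/s\to 0$ and the right-hand side tends to $u^2=(\log|z_0^0|)^2$, which is exactly the norm of $\sigma$ for the hyperbolic cusp metric on the corresponding component of the normalization $Y_0$; thus $f$ extends continuously over $X\setminus\Sigma_{X/S}$. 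For the log-log growth near $Q_i$ I would use $2\min(\theta,1-\theta)\le\sin(\pi\theta)\le\pi\min(\theta,1-\theta)$ for $\theta=u/s\in(0,1)$, together with $|s|\min(\theta,1-\theta)=\min(|u|,|v|)$, to get $\tfrac{4}{\pi^2}\min(|u|,|v|)^2\le(\norm{\sigma})^2\le\min(|u|,|v|)^2$. Since $\min(|u|,|v|)=|\log\max(|z_0^i|,|z_1^i|)|$, this yields $f=2\log\big|\log\max(|z_0^i|,|z_1^i|)\big|+O(1)$, which is precisely log-log growth along $\Sigma_{X/S}$.

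For Mumford goodness with singularities along $\pi^{-1}(\Delta)$ (to be checked on $X\setminus\Sigma_{X/S}$, the node being governed by the log-log bound above), I would differentiate $f=2\log|s|+2\log\sin(\pi u/s)+O(1)$ and verify Poincaré growth of $\partial f$, $\dbar f$, $\partial\dbar f$. One computes $\partial_{z_1^i}f=\frac{1}{z_1^i s}\big(1-\frac{\pi u\cot(\pi u/s)}{s}\big)$; near $\{z_1^i=0\}$ one has $u/s\to 0$ and the parenthesis tends to $0$, so Poincaré growth does not follow from naive size counting. The key is the cancellation: inserting $\cot x=\tfrac1x-\tfrac x3+O(x^3)$ gives $1-\frac{\pi u\cot(\pi u/s)}{s}=\tfrac{\pi^2u^2}{3s^2}+O((u/s)^4)$, whence $\partial_{z_1^i}f=O\!\big(u^2/(z_1^i s^3)\big)$ and $\big|z_1^i\log|z_1^i|\cdot\partial_{z_1^i}f\big|=O\!\big(|v|\,u^2/|s|^3\big)\to 0$; the conjugate and mixed second derivatives are treated by the same expansion. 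This delicate cancellation in the $\cot$-expansion — which is exactly what keeps the curvature of the grafted metric of Poincaré size instead of blowing up — is the main obstacle of the proof.

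Finally, on the transition annulus $U(Q_i,2\epsilon)\setminus U(Q_i,\epsilon)$ one has $f=\nu\log\norm{\sigma}_{\text{Cyl}}^2+(1-\nu)\log\norm{\sigma}_{\text{prod}}^2$. Here $\max(|z_0^i|,|z_1^i|)\ge\epsilon$ keeps $\min(|u|,|v|)$ bounded, so both summands are bounded and, by the previous step together with the classical goodness of the hyperbolic cusp metric, P-singular; since $\nu$ is a smooth cutoff with bounded derivatives supported away from $\Sigma_{X/S}$, the combination $f$ is again continuous and good with singularities along $\pi^{-1}(\Delta)$, and as $t\to 0$ both summands tend to the same hyperbolic cusp value, so $f$ extends continuously across $X_0$. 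Collecting the three regions gives continuity over $X\setminus\Sigma_{X/S}$, goodness along $\pi^{-1}(\Delta)$, and log-log growth along $\Sigma_{X/S}$.
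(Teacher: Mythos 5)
Your proof is correct and is essentially the argument the paper intends: the paper's entire proof is the remark that this is ``an easy verification'' together with a citation of Wolpert's Lemma 1.5, and that verification rests on exactly the explicit dual-norm formula $\big\| dz_0^i/z_0^i \big\|^2 = \tfrac{s^2}{\pi^2}\sin^2(\pi u/s)$ and the region-by-region check you carry out. Your write-up additionally makes explicit the one genuinely non-obvious point hidden in that citation, namely the cancellation $1-\pi\theta\cot(\pi\theta)=\tfrac{\pi^2\theta^2}{3}+O(\theta^4)$ which forces Poincaré growth of $\partial f$, $\dbar f$ and $\partial\dbar f$ along $\pi^{-1}(\Delta)$, so your text can stand as a self-contained substitute for the reference.
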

	\begin{proof}
		This is an easy verification, see for example Wolpert \cite[Lemma 1.5]{Wolp90}.
	\end{proof}
	\begin{figure}[h]
		\centering
		\includegraphics[width=0.7\textwidth]{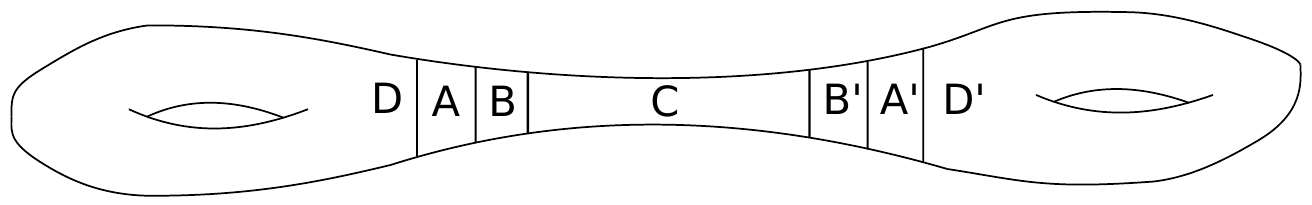}	
		\caption{The model grafting. Over the regions $D \cup A$, $D' \cup A'$, the metric $g^{TX_t}_{{\rm{gft}}}$ is isometric to $g^{TX_t}$.  
		Over the regions $A$, $A'$ it is Poincaré-compatible (see (\ref{reqr_poincare})) with coordinates $z^i_{0}$, $z^i_{1}$.
		Over the regions $B$, $B'$ the metric $g^{TX_t}_{{\rm{gft}}}$ is a geometric interpolation between $g^{TX_t}$ and (\ref{defn_g_cyl1}). 
		Over the region $C$, the metric $g^{TX_t}_{{\rm{gft}}}$ is given by (\ref{defn_g_cyl1}).
		}
		\label{fig_graft}
	\end{figure}
	\end{sloppypar}
	\par 
	Now let's cite a result about the comparison of the hyperbolic metric near degeneration and the grafted metric.
	Let's denote by $g^{TX_t}_{{\rm{hyp}}}$ the hyperbolic metric of constant scalar curvature $-1$ on $X_t$.
	As usual, we denote by $\norm{\cdot}^{\omega, {\rm{hyp}}}_{X/S}$ the induced Hermitian norm on $\omega_{X/S}$.
	The pinching expansion describes the behavior of $g^{TX_t}_{{\rm{hyp}}}$ near the singular fiber, it won't be used explicitly in this article, however, it is very important to understand the motivation behind the the metric $g^{TX_t}_{\varkappa}$ from Section \ref{sect_final_sect}.
	\begin{thm}[The pinching expansion, {Wolpert \cite[Expansion 4.2]{Wolp90}}]\label{thm_wolp_exp}
		For $t \in S \setminus |\Delta|$, we have
		\begin{equation}\label{compar_g_gft}
			 g^{TX_t}_{{\rm{hyp}}} = g^{TX_t}_{\rm{gft}} \Big(1 +  O \big( |\log |t||^{-2} \big) \Big),
		\end{equation}
		where the $O$-term is for the $\ccal^{k}$-norm over $X_t$ for any $k \in \nat$ with respect to $g^{TX_t}_{{\rm{hyp}}}$.
	\end{thm}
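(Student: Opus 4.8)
The plan is to realize the true hyperbolic metric as a conformal correction of the grafted metric and to estimate the correction through the prescribed-curvature equation. Since both metrics are conformal to the fixed complex structure of the fiber $X_t$, write $g^{TX_t}_{\rm{hyp}} = e^{2 u_t} g^{TX_t}_{\rm{gft}}$ for a function $u_t$ on $X_t$. Denoting by $K_{\rm{gft}}$ the Gauss curvature of $g^{TX_t}_{\rm{gft}}$ and by $\Delta_{\rm{gft}}$ its Laplace--Beltrami operator (analyst's sign, so that $\Delta_{\rm{gft}} u \leq 0$ at an interior maximum of $u$), the requirement that $g^{TX_t}_{\rm{hyp}}$ have constant curvature $-1$ becomes the Liouville equation
\begin{equation}
    \Delta_{\rm{gft}} u_t = e^{2 u_t} + K_{\rm{gft}}.
\end{equation}
The whole theorem reduces to the uniform estimate $\| u_t \|_{\ccal^{k}} = O(|\log|t||^{-2})$ measured with respect to $g^{TX_t}_{\rm{hyp}}$, since then $e^{2 u_t} = 1 + O(|\log|t||^{-2})$ yields exactly (\ref{compar_g_gft}).

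The first step is to control $K_{\rm{gft}}$. Outside the transition annuli $\{ \epsilon < |z^i_0| < 2 \epsilon \}$ (where $\nu$ is locally constant) the grafted metric is genuinely hyperbolic: it either coincides with $g^{TX_t}$, which is locally isometric to the hyperbolic metric of a component of $R^{\epsilon}$, or with the cylinder metric (\ref{defn_g_cyl1}), which has constant curvature $-1$; hence $K_{\rm{gft}} \equiv -1$ there. Inside a transition annulus one compares the two factors of the geometric interpolation (\ref{defn_gen_gft}). Expanding (\ref{defn_g_cyl1}) for $\log|z^i_0| = O(1)$ while $\log|t| \to -\infty$, using $(\sin x)^{-1} = x^{-1}(1 + x^2/6 + \cdots)$ with $x = \pi \log|z^i_0|/\log|t|$, one finds
\begin{equation}
    g^{\rm{Cyl}}_{i,t} = \frac{\imun \, d z^i_0 \, d \overline{z}^i_0}{|z^i_0 \log|z^i_0||^2} \Big( 1 + O(|\log|t||^{-2}) \Big),
\end{equation}
so the cylinder metric agrees with the Poincaré cusp metric (\ref{reqr_poincare}) underlying $g^{TX_t}$ up to a factor $1 + O(|\log|t||^{-2})$, with the same estimate for all derivatives in $z^i_0$. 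Consequently $g^{TX_t}_{\rm{gft}} = g^{TX_t}(1 + O(|\log|t||^{-2}))$ in every $\ccal^{k}$-norm on the transition annulus, and a direct computation of the Gauss curvature of such a conformal perturbation gives $K_{\rm{gft}} = -1 + \rho_t$ with $\| \rho_t \|_{\ccal^{k}} = O(|\log|t||^{-2})$, the error being supported in the transition annuli.

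The second step solves for $u_t$. Rewriting the equation as $\Delta_{\rm{gft}} u_t = (e^{2 u_t} - 1) + \rho_t$, the maximum principle gives at once $\| u_t \|_{\ccal^{0}} = O(\| \rho_t \|_{\ccal^{0}}) = O(|\log|t||^{-2})$: at an interior maximum one has $e^{2 u_t} - 1 \leq -\rho_t$, and symmetrically at a minimum. To upgrade this to the $\ccal^{k}$-estimate I would apply elliptic (Schauder) estimates for $\Delta_{\rm{gft}}$ on balls of fixed $g^{TX_t}_{\rm{gft}}$-radius and bootstrap. The crucial point, and the part I expect to be the main obstacle, is that these estimates must be uniform in $t$ even though the intrinsic geometry degenerates, the collar becoming arbitrarily long as $t \to 0$; this is exactly where the model structure is indispensable. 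In the intrinsic metric $g^{TX_t}_{\rm{gft}}$ the support of $\rho_t$ has bounded geometry independent of $t$, while along the long collar the metric is the explicit model (\ref{defn_g_cyl1}), on which $\Delta_{\rm{gft}} - 2$ (the linearization of $u \mapsto \Delta_{\rm{gft}} u - e^{2u} + 1$) is uniformly invertible with controlled Schauder constants. Feeding the $\ccal^{0}$-bound and the $\ccal^{k}$-bound on $\rho_t$ through these uniform estimates yields $\| u_t \|_{\ccal^{k}} = O(|\log|t||^{-2})$ for every $k$, which finishes the proof. An alternative closer to Wolpert's original argument, which sidesteps abstract uniform elliptic theory, is to build explicit super- and sub-solutions of the Liouville equation from the model collar metric and localize the maximum-principle estimate to the transition annuli, thereby deriving the pointwise decay of $u_t$ and its derivatives directly.
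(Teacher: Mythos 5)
The paper does not actually prove this statement—it quotes it as Wolpert's Expansion 4.2 (with Freixas's thesis cited for the cusped generalization)—and your argument is essentially a reconstruction of Wolpert's own proof: the grafted metric has curvature deviating from $-1$ by $O(|\log|t||^{-2})$ only on the fixed transition annuli, the maximum principle applied to the Liouville equation gives the $C^0$ bound on the conformal factor, and uniform elliptic estimates exploiting the explicit model collar give the $C^k$ bounds. So the proposal is correct and follows essentially the same route as the source on which the paper relies.
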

	\begin{rem}
		We note that in \cite[Theorem 4.3.1]{FreixTh}, Freixas proved Theorem \ref{thm_wolp_exp} for degenerating families of hyperbolic surfaces with cusps.
	\end{rem}

\section{The behavior of Quillen metric near singular fibers}
	The goal of this section is to prove Theorems \ref{thm_imm_thm}, \ref{thm_compat}, \ref{thm_imm_thm_modul}, \ref{thm_full_comp_pert}, which are the main statements of this article. 
	It is organized as follows. 
	In Section 3.1 we use Theorems \ref{thm_comp_appr}, \ref{thm_anomaly_cusp}, \ref{thm_cont}, \ref{thm_tz_imm_thm} to prove Theorems \ref{thm_imm_thm}, \ref{thm_compat}, \ref{thm_imm_thm_modul} modulo a certain universality statement.
	In Section 3.2 we slightly generalize the result of Bismut \cite[Theorem 0.3]{BisDegQuil} about the behavior of the Quillen norm in a smooth Kähler family of degenerating compact Riemann surfaces by dropping out the Kähler assumption. 
	Finally, in Section 3.3 by using this result, we prove the universality statement, which we used in Section 3.1.
	From the proof of this statement we will also deduce Theorem \ref{thm_full_comp_pert}.

		\subsection{Quillen metric on the singular locus, proof of Theorems \ref{thm_imm_thm}, \ref{thm_compat}, \ref{thm_imm_thm_modul}}\label{sect_main_pfs}
		In this section we will prove Theorems \ref{thm_imm_thm}, \ref{thm_compat}, \ref{thm_imm_thm_modul} modulo a certain universality statement, which will be established in Section \ref{sect_final_sect}. To state it, we conserve the notation from the statement of Theorem \ref{thm_imm_thm}.
		\begin{thm}\label{thm_imm_thm_upto_const}
			For any $n \leq 0$, there exists a universal constant $A_{-n} \in \real$ such that Theorem \ref{thm_imm_thm} holds for any family of curves $\pi : X \to S$ without hyperbolic cusps (i.e. $m = 0$) with the constant $A_{-n}$ in place of $C_{-n}$.
		\end{thm}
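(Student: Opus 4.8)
The plan is to reduce the cusped problem to a smooth one, apply the non-Kähler generalization of Bismut's degeneration formula from Section~3.2, and then show that the discrepancy introduced by this reduction is purely local near the nodes, hence universal. Since $m = 0$, every nonsingular fiber $X_t$, $t \in S \setminus |\Delta|$, is a smooth compact Riemann surface carrying a smooth metric, so the fiberwise Quillen norm entering $\norm{\cdot}_{\mathscr{L}_n}$ is the ordinary (smooth) Quillen norm; only the normalization $Y_{t_0}$, $t_0 \in S'$, acquires cusps (the $2k$ preimages of the $k$ nodes), so that $\norm{\cdot}_{\mathscr{L}'_n}$ is built out of genuine cusped Quillen norms. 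I would fix once and for all an auxiliary smooth Hermitian metric $\tilde{g}$ on $\omega_{X/S}$ and a smooth metric $\tilde{h}$ on $\xi$ over the whole of $X$; these restrict to smooth metrics on every fiber and, via $\rho$, on every $Y_{t_0}$.

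First I would compare, fiber by fiber over $S \setminus |\Delta|$, the Quillen norm for $(g^{TX_t}, h^\xi)$ with the one for $(\tilde g, \tilde h)$. Since for $m = 0$ both metrics are smooth on $X_t$, the ordinary anomaly formula expresses the ratio as the exponential of a fiberwise Bott--Chern integral $I_X(t)$ over $X_t$. By assumptions (\ref{suppos_s3}) and (\ref{suppos_restr}) the comparison forms have Poincaré and log-log growth along $\Sigma_{X/S}$, so Proposition~\ref{prop_int_loglog} shows that $I_X$ extends continuously to $S'$ with limiting value the regularized integral $\int^{\textbf{r}}_{Y_{t_0}}$ of the corresponding form, namely the pullback along $\rho$ of the $X_t$-integrand; here the fiber metric $g^{TX_t}$ induces in the limit, via $\rho$, exactly the cusped metric $g^{TY_{t_0}}$. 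On the normalization side I would compare $g^{TY_{t_0}}$ (cusped) with $\tilde g|_{Y_{t_0}}$ (smooth) by factoring through a standard flattening and applying Theorem~\ref{thm_comp_appr} followed by the smooth anomaly formula: this yields the same regularized Bott--Chern integral over $Y_{t_0}$, plus the correction term isolated in Theorem~\ref{thm_comp_appr}, which depends only on $n$ and on the universal local models (the Poincaré cusp and the fixed flattening), and is therefore a universal function of $n$ times the number $2k$ of cusps.

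Finally I would apply the generalization of Bismut's \cite[Theorem 0.3]{BisDegQuil} proved in Section~3.2 to the smooth family $(\pi, \tilde g, \tilde h)$, obtaining the restriction identity for the smooth norms with Bismut's universal constant. Assembling the three comparisons through the isomorphism (\ref{isom_main}), the two regularized Bott--Chern integrals over $Y_{t_0}$ --- which carry all the dependence on the auxiliary data $\tilde g, \tilde h$ --- cancel, and what survives is Bismut's universal constant minus the universal cusp correction; this defines $A_{-n}$, manifestly independent of $\pi$, of $(\xi, h^\xi)$ and of the metric. The main obstacle is twofold: establishing the non-Kähler degeneration formula of Section~3.2 (needed because the natural model, the grafted metric of Section~\ref{sect_gen_gft}, and the auxiliary $\tilde g$ are only required to be smooth, not Kähler), and controlling the cancellation of the regularized integrals near the cusps, i.e. checking that the $\epsilon$-regularization supplied by Proposition~\ref{prop_int_loglog} on the $X$-side agrees with the one implicit in Theorem~\ref{thm_comp_appr} on the $Y$-side so that only a constant remains. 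I expect the bookkeeping of the Wolpert norms and of the $(\det \pi_* \mathscr{O}_{\Sigma_{Z/S'}})$-factors in (\ref{isom_main}) to be routine once this analytic comparison is in place.
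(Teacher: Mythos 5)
Your overall architecture is the same as the paper's: compare the given cusp-degenerating metric with an auxiliary smooth one, invoke the non-K\"ahler extension of Bismut's theorem (Theorem \ref{thm_bismut_rest}) for the smooth data, handle the normalization side through Theorem \ref{thm_comp_appr}, and get universality from locality of the constructions near $\Sigma_{X/S}$. But there is a genuine gap at your central analytic step, and it appears in both possible readings of the auxiliary metric $\tilde g$. If $\tilde g$ is, as you write, a smooth Hermitian metric on the \emph{line bundle} $\omega_{X/S}$, then in the local frame $dz_0/z_0$ of $\omega_{X/S}$ near a node its norm is bounded and nonvanishing, so the induced fiber metrics are cylinder-like ($\sim |dz_0|^2/|z_0|^2$) near $\Sigma_{X/S}$; the metric induced on the normalization $Y_{t_0}$ is then a complete cylindrical-end metric, not a smooth metric on a compact surface. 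Hence it is false that $\tilde g$ ``restricts to smooth metrics \ldots via $\rho$, on every $Y_{t_0}$'', and Theorem \ref{thm_bismut_rest}, which requires a Riemannian metric on the total space $X$ compatible with the complex structure, does not apply to $(\pi,\tilde g,\tilde h)$. If instead $\tilde g$ is induced by a smooth Riemannian metric on $X$ --- the only setting where Theorem \ref{thm_bismut_rest} is available --- then your first step fails: in the frame $dz_0/z_0$ such a metric satisfies $\log \| dz_0/z_0\|^2 \approx -\log\big(|z_0|^2+|t/z_0|^2\big)$, i.e.\ it has genuine logarithmic (not log-log) singularities along $\Sigma_{X/S}$, so the Bott--Chern integrand comparing it with the cusp-model metric lies outside the scope of Proposition \ref{prop_int_loglog}, and the fiberwise anomaly integral $I_X(t)$ does \emph{not} extend continuously over $S'$: it diverges like $k\log|t|$.

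This divergence is not a regularization nuisance; it is the heart of the matter. It reflects the fact that the bundle $\mathscr{L}$ of Section 3.2 carries $\mathscr{O}_S(\Delta)^{2\rk{\xi}}$ while $\mathscr{L}_n$ of Theorem \ref{thm_imm_thm} carries only $\mathscr{O}_S(\Delta)^{\rk{\xi}}$ (a discrepancy the paper points out explicitly after defining $\mathscr{L}$), so the ratio of the ``smooth'' and ``cusped'' Quillen norms must blow up like $|t|^{-k\rk{\xi}}$ in order for both restriction statements to be true simultaneously. The paper's proof is organized precisely around extracting this divergence: the anomaly term is split as $G = G_1 + G_2$; Proposition \ref{prop_int_loglog} is applied only to $G_2$, where the two metrics being compared both have cusp-type (hence log-log) behavior; and $\int_{X_t} G_1$ is computed by hand (equations (\ref{eq_decomp_f1})--(\ref{eq_regular_computation2})), giving $\int_{X_t}G_1 = -\tfrac{k}{6}\log|t| + k A^{I}_{-n} + o(1)$. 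Moreover, the finite constants are universal because the paper's auxiliary metrics are \emph{standardized local models} near the nodes ($g^{TX}_{\sim}$ Euclidean there, $g^{TX_t}_{\varkappa}$ equal to the hyperbolic cylinder there), not arbitrary global data; with your arbitrary $\tilde g$ you would still need to prove that the $\tilde g$-dependent finite parts on the family side and on the normalization side cancel against each other, for which agreement of the two regularizations is exactly what must be established, not assumed. To repair your argument you would essentially have to reproduce the paper's local constructions and the explicit integral computations that pin down both the divergent term and its finite part.
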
		
		Now let's see how Theorem \ref{thm_imm_thm_upto_const} can be used to prove Theorems \ref{thm_imm_thm}, \ref{thm_compat}.
		\begin{proof}[Proof of Theorems \ref{thm_imm_thm}, \ref{thm_compat}]
			The proof consists of 3 steps, and we start with a short résumé of them.
			In \textit{Steps 1, 2}, we prove that in fact it is enough to establish Theorem \ref{thm_imm_thm} for $m = 0$.
			More precisely, in \textit{Step 1} we see that by Theorem \ref{thm_anomaly_cusp}, we can trivialize the Poincaré-compatible coordinates associated to $g^{TX_t}$ (thus, the associated Wolpert norm). 
			And in \textit{Step 2}, by Theorem \ref{thm_comp_appr}, we will see that one can delete the cusps from the metric obtained in \textit{Step 1}, so that the statements of Theorem \ref{thm_imm_thm} for the two metrics are equivalent.
			\par 
			In the first two steps the reduction is done by modifying norms $\norm{\cdot}_{X/S}^{\omega}$, $\norm{\cdot}_{X/S}$ only in the neighborhood of $|D_{X/S}|$ and our technique is the same as in the proofs of \cite[Theorems C, D]{FinII2}.
			\par Finally, in \textit{Step 3}, by applying Theorem \ref{thm_tz_imm_thm} two times and using the fact that for the 3-punctured hyperbolic sphere, endowed with constant scalar curvature $-1$ complete metric, our analytic torsion coincides with the version of the analytic torsion (\ref{eqn_sel_norm}) of Takhtajan-Zograf (see \cite[Remark 2.18a)]{FinII1}), we will see that the constant $A_{-n}$ from Theorem \ref{thm_imm_thm_upto_const} actually coincides with $C_{-n}$ from Theorem \ref{thm_imm_thm}. 
			This with the first two steps finishes the proof of Theorem \ref{thm_imm_thm}.
			\par We also note that by Theorem \ref{thm_anomaly_cusp}, we may suppose that near the singular locus, the Hermitian vector bundle $(\xi, h^{\xi})$ is trivial over a small neighborhood of $|D_{X/S}| \cup \Sigma_{X/S}$.
			\par Finally, we note that by Theorem \ref{thm_cont}, it is enough to prove Theorem \ref{thm_imm_thm} only in the case $\dim S = 1$, $S = D(1)$, $|\Delta| = \{0\}$. From now on we make those assumptions, and we conserve the relevant notations from Section \ref{sect_bismut_proof}.
			\par \textbf{Step 1.} 
			Let $V_{i, c}$, $i=1, \ldots, m$, $c > 0$ (resp. $U$) be a neighborhood of $\sigma_i(t_0)$ (resp. $t_0$) such that for some local coordinates $(z_0, \ldots, z_q)$ of $\sigma_i(t_0)$ and $(w_1, \ldots, w_q)$ of $t_0 \in S$, satisfying (\ref{eq_pr_nonsing}), we have $V_{i, c} = \{ x \in \pi^{-1}(U) : |z_0(x)| < c \}$ and $\{z_0(x) = 0\} = \{ \sigma_i(t): t \in U \}$.
			For simplicity, we note $V_i := V_{i, 1}$.
			Let 
			$\nu_0 : \real_+ \to [0,1]$ be a smooth function satisfying
			\begin{equation}\label{defn_nu}
				\nu_0(u) = 
				\begin{cases}
					\hfill 0, & \text{if} \quad u < 1/2, \\
					\hfill 1, & \text{if} \quad u > 3/4. 
				\end{cases}
			\end{equation}
			\par 
			We denote by $\norm{\cdot}^{\omega, 0}_{X/S}$ the norm on $\omega_{X/S}$ over $X \setminus (\pi^{-1}(|\Delta|) \cup |D_{X/S}|)$ such that $\norm{\cdot}^{\omega, 0}_{X/S}$ coincides with $\norm{\cdot}^{\omega}_{X/S}$ away from $\cup_{i = 1}^{m} V_i$, and over $(\cup_{i = 1}^{m} V_i) \setminus ( \pi^{-1}(|\Delta|) \cup |D_{X/S}| )$, we have
			\begin{equation}\label{defn_om_0}
				\norm{dz_0}^{\omega, 0}_{X/S} = \big| z_0 \log |z_0| \big|^{1 - \nu_0(|z_0|)} \cdot \big( \norm{dz_0}^{\omega}_{X/S} \big)^{\nu_0(|z_0|)}.
			\end{equation}
			Let $\norm{\cdot}^{0}_{X/S}$ be the norm on $\omega_{X/S}(D)$ induced from $\norm{\cdot}_{X/S}^{\omega, 0}$ as in Construction \ref{const_norm_div}, and let $g^{TX_t}_{0}$, $t \in S$ be the induced Kähler metric $X_t \setminus D_{X/S}$ with cusps at $D_{X/S} \cap X_t$. 
			We denote by $\norm{\cdot}^{W, 0, i}_{X/S}$, $i = 1, \ldots, m$, the Wolpert norms (see Definition \ref{defn_wolpert_norm}) on $ \sigma_i^{*} \omega_{X/S}$ induced by $g^{TX_t}_{0}$, and by $\norm{\cdot}^{W, 0}_{X/S}$ the induced Wolpert norm on $\otimes_{i = 1}^{m} \sigma_i^{*} \omega_{X/S}$.
			Then by Construction \ref{const_norm_div} and (\ref{defn_om_0}), we see that if $\norm{\cdot}_{X/S}$ satisfies assumptions (\ref{suppos_s3}) and (\ref{suppos_restr}), then $\norm{\cdot}_{X/S}^{0}$ satisfies assumptions (\ref{suppos_s3}) and (\ref{suppos_restr}) as well.
			In fact, this property along with the fact that $\norm{\cdot}^{\omega, 0}_{X/S}$ doesn't vary in the horizontal direction around the cusps are the only facts we need from the construction (\ref{defn_om_0}).
			\par We denote by $g^{TY_0}_0$ the Kähler metric on $Y_0 \setminus D_{Y_0}$, constructed from $\norm{\cdot}_{Y_0}^{0} := \rho^* ( \, \norm{\cdot}^{0}_{X/S})$ as in Construction \ref{const_norm_div}. We denote by $\norm{\cdot}^{W, 0}_{Y_0}$ the Wolpert norm on $\otimes_{i = 1}^{m + 2k} (\sigma_i')^{*} \omega_{Y_0}$ induced by $g^{TY_0}_{0}$.
			\par As we supposed that $(\xi, h^{\xi})$ is trivial in a neighborhood of $|D_{X/S}|$, and the metrics $\norm{\cdot}_{X/S}$, $\norm{\cdot}_{X/S}^{0}$ differ only in the neighborhood of $|D_{X/S}|$, by Theorem \ref{thm_anomaly_cusp}, applied pointwise for the line bundle $\lambda(j^*(\xi \otimes \omega_{X/S}(D)^n))^{12} \otimes (\otimes_{i = 1}^{m} \sigma_i^{*} \omega_{X/S})^{- \rk{\xi}}$, for any $t \in S \setminus |\Delta|$, we see that we have
			\begin{equation}\label{eq_thmc_aux_anomal1}
			\begin{aligned}
				&\frac{1}{6} \log \Big(  
					\norm{\cdot}_{Q}  \big(g^{TX_t}_{0},  h^{\xi} \otimes (\, \norm{\cdot}_{X/S}^{0})^{2n}\big)^{12} \otimes 
					\big( \norm{\cdot}_{X/S}^{W, 0} \big)^{-\rk{\xi}}
				\Big)
				\\
				& \qquad \qquad \qquad \qquad  \qquad
				- 
				\frac{1}{6} \log \Big(  
					\norm{\cdot}_{Q}  \big(g^{TX_t},  h^{\xi} \otimes (\, \norm{\cdot}_{X/S})^{2n}\big)^{12} \otimes 
					\big( \norm{\cdot}_{X/S}^{W} \big)^{-\rk{\xi}}
				\Big)
						\\
						&  = 
				 			\rk{\xi} \cdot \int_{X_t}
				 			\Big( 
			 					\widetilde{\td} \big(\omega_{X/S}(D)^{-1}, \, \norm{\cdot}^{-2}_{X/S}, (\, \norm{\cdot}^{0}_{X/S})^{-2} \big)  \ch \big(\omega_{X/S}(D)^n, \norm{\cdot}_{X/S}^{2n} \big) \\
								&  \phantom{= \int_{M} 
				 			\Big[ } 
				 			\qquad \qquad
				 			+			 		 					
					 			\td \big(\omega_{X/S}(D)^{-1}, (\, \norm{\cdot}^{0}_{X/S})^{-2} \big) \widetilde{\ch} \big(\omega_{X/S}(D)^n, \norm{\cdot}_{X/S}^{2n}, (\, \norm{\cdot}_{X/S}^{0})^{2n} \big) 									 				
					 		\Big).
				\end{aligned}
			\end{equation}
			We note that the conformal factor corresponding to the change of the metric from $\norm{\cdot}^{\omega}_{X/S}$ to $\norm{\cdot}^{\omega, 0}_{X/S}$ is non-trivial in the neighborhood of the cusp. Thus, we use Theorem \ref{thm_anomaly_cusp} with the conformal factor which doesn't have compact support in the punctured surface.
			\par 
			By applying Theorem \ref{thm_anomaly_cusp}, where instead of flattening, we choose a “partial flattening" applied for the $m$ points, coming from $\sigma_1', \ldots, \sigma_m'$, we get
			\begin{equation}\label{eq_thmc_aux_anomal1y0}
			\begin{aligned}
				&\frac{1}{6} \log \Big(  
					\norm{\cdot}_{Q}  \big(g^{TY_0}_{0},  \rho^* (h^{\xi}) \otimes (\, \norm{\cdot}_{Y_0}^{0})^{2n}\big)^{12} \otimes 
					\big( \norm{\cdot}_{Y_0}^{W, 0} \big)^{-\rk{\xi}}
				\Big)
				\\
				& \qquad \qquad \qquad \qquad  \qquad
				- 
				\frac{1}{6} \log \Big(  
					\norm{\cdot}_{Q}  \big(g^{TY_0}, \rho^* (h^{\xi}) \otimes (\, \norm{\cdot}_{Y_0})^{2n}\big)^{12} \otimes 
					\big( \norm{\cdot}_{Y_0}^{W} \big)^{-\rk{\xi}}
				\Big)
						\\
						&  = 
				 			\rk{\xi} \cdot \int_{Y_0}
				 			\Big( 
			 					\widetilde{\td} \big(\omega_{Y_0}(D)^{-1}, \, \norm{\cdot}^{-2}_{Y_0}, (\, \norm{\cdot}^{0}_{Y_0})^{-2} \big) \ch \big(\omega_{Y_0}(D)^n, \norm{\cdot}_{Y_0}^{2n} \big) \\
								&  \phantom{= \int_{M} 
				 			\Big[ } 
				 			\qquad \qquad \qquad \quad +			 		 					
					 			\td \big(\omega_{Y_0}(D)^{-1}, (\, \norm{\cdot}^{0}_{Y_0})^{-2} \big) \widetilde{\ch} \big(\omega_{Y_0}(D)^n, \norm{\cdot}_{Y_0}^{2n}, (\, \norm{\cdot}_{Y_0}^{0})^{2n} \big) 									 				
					 		\Big).
				\end{aligned}
			\end{equation}
			By Proposition \ref{prop_int_loglog}, we see that the right-hand-side of (\ref{eq_thmc_aux_anomal1}) extends continuously over $S$,
			moreover, as $t \to 0$, by (\ref{eq_prop_cont_ext}), the right-hand side of (\ref{eq_thmc_aux_anomal1}) tends to the right-hand side of (\ref{eq_thmc_aux_anomal1y0}). Thus, we see that it is enough to prove Theorem \ref{thm_imm_thm} for the metrics $\norm{\cdot}_{X/S}^{0}$, $\norm{\cdot}_{X/S}^{\omega, 0}$, $\norm{\cdot}_{X/S}^{W, 0}$ instead of $\norm{\cdot}_{X/S}$, $\norm{\cdot}_{X/S}^{\omega}$, $\norm{\cdot}_{X/S}^{W}$. We also note, that by (\ref{defn_om_0}), for $i = 1,\ldots, m$, the following identity holds
			\begin{equation}\label{eq_rel_wolp_normstep2}
				\big \| dz_0|_{\sigma_i(t)} \big \|^{W, 0, i}_{X / S} =  \big \| dz_0|_{\sigma_i'(0)} \big \|^{W, 0, i}_{Y / S'} = 1.
			\end{equation}
			\par 
			\textbf{Step 2.}
			We denote by $V_i' = V_{i, 1/2} \subset V_i$, $i = 1, \ldots, m$. Let $\norm{\cdot}^{\omega, {\rm{cmp}}}_{X/S}$ be the Hermitian norm on $\omega_{X/S}$ over $X \setminus \pi^{-1}(|\Delta|)$ such that $\norm{\cdot}^{\omega, {\rm{cmp}}}_{X/S}$ coincides with $\norm{\cdot}^{\omega, 0}_{X/S}$ away from $\cup_{i = 1}^{m} V_i'$, and for $\nu_0 : \real \to [0,1]$ from (\ref{defn_nu}), it is given over $V_i'$ by
			\begin{equation}\label{defn_om_cmp}
				\norm{dz_0}^{\omega, {\rm{cmp}}}_{X/S} = |z_0 \log |z_0||^{\nu_0(2|z_0|)}.
			\end{equation}
			\par
			We denote by $g^{TX_t}_{{\rm{cmp}}}$ the induced Kähler metric on  $X_t$.
			By (\ref{defn_om_cmp}), we see that if $\norm{\cdot}_{X/S}^{\omega, 0}$ satisfies assumptions (\ref{suppos_s3}) and (\ref{suppos_restr}), then $\norm{\cdot}_{X/S}^{\omega, \rm{cmp}}$ satisfies assumptions (\ref{suppos_s3}) and (\ref{suppos_restr}) as well, but for $D_{X/S} = \emptyset$, i.e. without the cusps.
			In fact, this property along with the fact that $\norm{\cdot}_{X/S}^{\omega, {\rm{cmp}}}$ doesn't vary in the horizontal direction around the cusps are the only facts we need from the construction (\ref{defn_om_cmp}).
			\par 
			We denote by $g^{TY_0}_{{\rm{cmp}}}$ the Kähler metric over $Y_0 \setminus \rho^{-1}(\Sigma_{X/S})$ induced from $\norm{\cdot}_{X/S}^{\omega, \rm{cmp}}$ as in Section 1 for $D_{X/S} = \emptyset$.
			We denote by $\norm{\cdot}^{{\rm{cmp}}}_{X/S}$ the norm on $\omega_{X/S}(D)$ over $X \setminus \pi^{-1}(|\Delta|)$, such that $\norm{\cdot}^{{\rm{cmp}}}_{X/S}$ coincides with $\norm{\cdot}^{0}_{X/S}$ away from $\cup_{i = 1}^{m} V_i'$, and over $V_i'$ we have
			\begin{equation}\label{defn_om_cmp2}
				\big\lVert dz_0 \otimes s_{D_{X/S}}/z_0 \big\rVert^{{\rm{cmp}}}_{X/S} = |\log |z_0||^{\nu_0(2|z_0|)}.
			\end{equation}
			We denote by $\norm{\cdot}^{{\rm{cmp}}}_{Y_0} := \rho^{*} (\, \norm{\cdot}^{{\rm{cmp}}}_{X/S})$ the induced Hermitian norm on $\omega_{Y_0}(D)$ over $Y_0 \setminus \rho^{-1}(\Sigma_{X/S})$. 
			Now, since in $g^{TX_t}_{0}$, the Poincaré-compatible coordinates of the cusps are trivialized, by Theorem \ref{thm_comp_appr}, we see that for $t \in S \setminus |\Delta|$, the following holds
			\begin{multline}\label{eq_thmc_aux_anomal22}
			2 \log \Big(  
				\norm{\cdot}_{Q} \big(g^{TX_t}_{\rm{cmp}},  h^{\xi} \otimes  (\, \norm{\cdot}_{X/S}^{{\rm{cmp}}})^{2n}\big) 
					\big/				 
					 \norm{\cdot}_{Q} \big(g^{TX_t}_{0}, h^{\xi} \otimes  (\, \norm{\cdot}_{X/S}^{0})^{2n}\big) 
					 \Big) 
			\\
			= 2 \log \Big(  
			\norm{\cdot}_{Q} \big(g^{TY_0}_{\rm{cmp}},  \rho^* (h^{\xi}) \otimes  (\, \norm{\cdot}_{Y_0}^{{\rm{cmp}}})^{2n}\big) 
				\big/				 
				 \norm{\cdot}_{Q} \big(g^{TY_0}_{0}, \rho^*  (h^{\xi}) \otimes (\, \norm{\cdot}_{Y_0}^{0})^{2n}\big) 
				 \Big),
			\end{multline}
			where we didn't mention the last term of (\ref{eqn_of_quil_norms_no_flat}) since $(\xi, h^{\xi})$ is trivial in the neighborhood of $|D_{X/S}|$, and the norms $\norm{\cdot}_{X/S}^{{\rm{cmp}}}$, $\norm{\cdot}_{X/S}^{0}$ differ only in the neighborhood of $|D_{X/S}|$.
			We denote by $\norm{\cdot}^{{\rm{cmp}}}_{D_{X/S}}$ the norm on $\mathscr{O}_X(D_{X/S})$, given by $\norm{\cdot}^{{\rm{cmp}}}_{X/S} / \norm{\cdot}^{\omega, {\rm{cmp}}}_{X/S}$. Then the norm $\norm{\cdot}^{{\rm{cmp}}}_{D_{X/S}}$ is trivial away from $\cup_{i = 1}^{m} V_i$ and smooth over $X$.
			\par By (\ref{isom_rest_w_div}), (\ref{eq_rel_wolp_normstep2}) and (\ref{eq_thmc_aux_anomal22}), we see that it is enough to prove Theorem \ref{thm_imm_thm} for the Hermitian vector bundles $(\xi \otimes \mathscr{O}_X(D_{X/S})^{n}, h^{\xi} \otimes (\, \norm{\cdot}^{{\rm{cmp}}}_{D_{X/S}})^{2n})$, $(\omega_{X/S}, \norm{\cdot}_{X/S}^{\omega, \rm{cmp}})$ and $D_{X/S} = \emptyset$, instead of $(\xi, h^{\xi})$, $(\omega_{X/S}, \norm{\cdot}_{X/S}^{\omega, 0})$ and $D_{X/S}$, given by (\ref{defn_dxs}). 
			But as the Hermitian norm $\norm{\cdot}^{{\rm{cmp}}}_{D_{X/S}}$ is smooth over $X$, such a statement would follow from Theorem \ref{thm_imm_thm} for $m = 0$. Thus, we conclude that to prove Theorem \ref{thm_imm_thm} in its full generality, it is enough to prove it only for $m = 0$. From now on, we suppose $m = 0$.
			\par 	\textbf{Step 3.}
			The goal of this step is to show that the constant $A_{-n}$ from Theorem \ref{thm_imm_thm_upto_const} actually coincides with the constant $C_{-n}$ from Theorem \ref{thm_imm_thm}. This would finish the proof of Theorem \ref{thm_imm_thm} by Steps 1,2. During the proof we will also establish Theorem \ref{thm_compat}.
			\par We consider a stable pointed Riemann surface $(\overline{M}, D_M)$ and the associated canonical plumbing family $\pi : X \to S$ with the canonical hyperbolic norm $\norm{\cdot}_{X/S}^{\rm{hyp}}$ on $\omega_{X/S}$ from Section \ref{sect_hyp_surf}.
			\par Then, in the notations of Section \ref{sect_hyp_surf}, by a theorem of Phong-d'Hooker (cf. Remark \ref{rem_thm_compat}), the following identity of norms over $S \setminus |\Delta|$ holds
			\begin{equation}\label{eq_pdh_restatement}
				\norm{\cdot}_Q (g^{TX_t}_{{\rm{hyp}}}, (\,\norm{\cdot}_{X/S}^{{\rm{hyp}}})^{2n}) 
				= 
				\norm{\cdot}_Q^{TZ} (g^{TX_t}_{{\rm{hyp}}}, (\,\norm{\cdot}_{X/S}^{{\rm{hyp}}})^{2n}).
			\end{equation}
			\par We apply this construction for $(\overline{M}, D_M) := (\overline{T}, D_T)$, where $(\overline{T}, D_T)$ is a 1-pointed torus, considered in Section \ref{sect_hyp_surf}. Then by Theorems \ref{thm_tz_imm_thm}, \ref{thm_imm_thm_upto_const} and (\ref{eq_pdh_restatement}), we get
			\begin{equation}\label{eq_torus_eq_mu_tz}
				\exp(A_{-n} /  2) \cdot \norm{\cdot}_Q (g^{TT}_{{\rm{hyp}}}, (\,\norm{\cdot}_{T}^{{\rm{hyp}}})^{2n}) 
				= 
				\exp(C_{-n} /  2) \cdot \norm{\cdot}_Q^{TZ} (g^{TT}_{{\rm{hyp}}}, (\,\norm{\cdot}_{T}^{{\rm{hyp}}})^{2n}).
			\end{equation} 
			By applying (\ref{eq_pdh_restatement}) again, but now for any $(\overline{M}, D_{\overline{M}})$, by Theorems \ref{thm_tz_imm_thm}, \ref{thm_imm_thm_upto_const} and (\ref{eq_torus_eq_mu_tz}), we see that for any $(\overline{M}, D_M)$, $m := \# D_M$, we have
			\begin{equation}\label{eq_final_pf_imm1}
				\exp(m \cdot A_{-n} /  2) \cdot \norm{\cdot}_Q (g^{TM}_{{\rm{hyp}}}, (\,\norm{\cdot}_{M}^{{\rm{hyp}}})^{2n}) 
				= 
				\exp(m \cdot C_{-n} /  2) \cdot  \norm{\cdot}_Q^{TZ} (g^{TM}_{{\rm{hyp}}}, (\,\norm{\cdot}_{M}^{{\rm{hyp}}})^{2n}).
			\end{equation} 
			However, by \cite[Remark 2.18a)]{FinII1}, our definition of the analytic torsion coincides with the definition of Takhtajan-Zograf for the 3-punctured hyperbolic sphere $P := \mathbb{P} \setminus \{0, 1, \infty \}$, i.e.
			\begin{equation}\label{eq_final_pf_imm2}
				\norm{\cdot}_Q (g^{TP}_{{\rm{hyp}}}, (\,\norm{\cdot}_{P}^{{\rm{hyp}}})^{2n}) 
				= 
				\norm{\cdot}_Q^{TZ} (g^{TP}_{{\rm{hyp}}}, (\,\norm{\cdot}_{P}^{{\rm{hyp}}})^{2n}).
			\end{equation} 
			By combining (\ref{eq_final_pf_imm1}) and (\ref{eq_final_pf_imm2}), we get $A_{-n} = C_{-n}$, which finishes the proof of Theorem \ref{thm_imm_thm} for $m = 0$. By this and Steps 1,2, we see that Theorem \ref{thm_imm_thm} holds for any $m \in \nat$. 
			From this, by (\ref{defn_quil}), (\ref{quil_tz_wol_norm}) and (\ref{eq_final_pf_imm1}), we deduce Theorem \ref{thm_compat}.
		\end{proof}
		\begin{proof}[Proof of Theorem \ref{thm_imm_thm_modul}.]
			By \cite[Proposition 5.6]{FinII2}, the norm $\norm{\cdot}_{g, m}^{{\rm{hyp}}}$ satisfies assumptions (\ref{suppos_s3}) and (\ref{suppos_restr}).
			Thus, Theorem \ref{thm_imm_thm_modul} is a direct consequence of Theorem \ref{thm_imm_thm} and \cite[Proposition 5.6]{FinII2}.
			The fact that the underlying spaces are orbifolds doesn't pose any problem, as our methods are local, and thus, can be applied on an orbifold chart.
		\end{proof}

	\subsection{Family of Riemann surfaces with smooth metric and Quillen metric}\label{sect_bismut_proof}
		In this section we describe a generalization of the result of Bismut \cite[Theorem 0.3]{BisDegQuil} for non-necessarily Kähler manifolds. This theorem describes the behavior of the Quillen norm in a family of degenerating Riemann surfaces endowed with \textit{compact} Riemann  metric, which comes from the metric on the total space of the family. It will be used in Section 4.3, but it is also of some independent interest.
		\par Let's fix a holomorphic, proper, surjective map $\pi: X \to S$ of complex manifolds, such that for every $t \in S$, the space $X_t := \pi^{-1}(t)$ is a curve (see Section \ref{sect_recall_relantors}). 
		Let $(\xi, h^{\xi})$ be a Hermitian vector bundle over $X$. 
		Let $g^{TX}$ be a Riemannian metric over $X$, which is compatible with the complex structure of $X$. 
		By $h^{TX}$ we note the Hermitian metric on $T^{(1, 0)}X$ induced by $g^{TX}$ by the natural identification $TX \ni Y \mapsto \frac{1}{2} (Y - JY) \in T^{(1,0)}X$, where $J$ is the complex structure of $X$.
		We denote by $g^{TX_t}$ the restriction of the metric $g^{TX}$ on $X_t$, $t \in S \setminus |\Delta|$. 
		Since $g^{TX}$ is compatible with the complex structure, the metric $g^{TX_t}$ is Kähler on $X_t$. 
		We denote by $\norm{\cdot}_Q(g^{TX_t}, h^{\xi})$ the Quillen norm on the line bundle $\lambda(j^* \xi)$  over $S \setminus |\Delta|$ (see (\ref{defn_quil})).
		\par For simplicity, assume that $\dim S = 1$, $S = D(1)$ and $|\Delta| = \{ 0 \}$.  
		We write $\Sigma_{X/S} = \{Q_1, \ldots, Q_k\}$. 
		Let $\rho : Y_0 \to X_0$ be the normalization of $X_0$. 
		We denote 
		\begin{equation}\label{eq_defn_pi}
			\rho^{-1}(\Sigma_{X/S}) = \{P_1, \ldots,   P_{2k} \},
		\end{equation}
		where $P_i$ are enumerated in such a way that $\rho(P_{2j- 1}) = \rho(P_{2j}) = Q_j$ for $j = 1, \ldots, k$.
		We denote by $g^{TY_0} := \rho^*(g^{TX})$ the induced Riemannian metric on $Y_0$ and by $\norm{\cdot}_{Y_0}^{\omega}$ the induced Hermitian norm on $\omega_{Y_0}$. 
		Since $g^{TX}$ is compatible with the complex structure, we see that $g^{TY_0}$ is Kähler on $Y_0$. 
		We denote by $\norm{\cdot}_Q(g^{TY_0}, \rho^* (h^{\xi}))$ the induced Quillen norm on the complex line $\lambda(\rho^* \xi)$.
		\par 
		Let $\norm{\cdot}_{\Sigma_{X/S}/X}^{i}$ be the Hermitian norm induced by the natural isomorphism (\ref{isom_pt_rho}) on the complex lines $\omega_{Y_0}|_{P_{2i - 1}} \otimes \omega_{Y_0}|_{P_{2i}}$, $i = 1, \ldots, k$.
		More explicitly, let local holomorphic coordinates $z^{i}_{0}, z^{i}_{1}$ around $Q_i \in X$ and $t$ around $0 \in S$ be as in (\ref{eq_proj_gen_gft}).
		We denote 
		\begin{equation}\label{eq_defn_ai_bi_ci}
			a_i = h^{TX} \Big( \frac{\partial}{\partial z^{i}_{0}}, \frac{\partial}{\partial z^{i}_{0}} \Big), \quad 
			b_i = h^{TX} \Big( \frac{\partial}{\partial z^{i}_{0}}, \frac{\partial}{\partial z^{i}_{1}} \Big), \quad 
			c_i = h^{TX} \Big( \frac{\partial}{\partial z^{i}_{1}}, \frac{\partial}{\partial z^{i}_{1}} \Big).
		\end{equation}
		Then, by definition, we have
		\begin{equation}\label{eq_norm_det_ai_defn}
			\big \|
				d z^{i}_{0} \otimes d z^{i}_{1} 
			\big \|_{\Sigma_{X/S}/X}^{i}
			=
			\big( a_i c_i - |b_i|^2 \big)^{-1/2}(Q_i).
		\end{equation}
		We denote by $\norm{\cdot}_{\Sigma_{X/S}/X}$ the induced norm on the complex line $\otimes_{i = 1}^{k} ( \omega_{Y_0}|_{P_{2i - 1}} \otimes \omega_{Y_0}|_{P_{2i}} )$.
		\par Over $S$, we introduce the holomorphic line bundle 
		\begin{equation}
			\mathscr{L} 
			:= 
			\lambda(j^* \xi)^{12} 
			\otimes 
			\mathscr{O}_{S}(\Delta)^{2 \cdot \rk{\xi}}.
		\end{equation}
		We endow it with a norm
		\begin{equation}
			\norm{\cdot}_{\mathscr{L}} := \norm{\cdot}_Q(g^{TX_t}, h^{\xi})^{12} \otimes (\, \norm{\cdot}_{\Delta}^{{\rm{div}}})^{2 \cdot \rk{\xi}}.
		\end{equation}
		We bring the attention of the reader to the fact that the power of the divisor line bundle $\mathscr{O}_{S}(\Delta)$ in $\mathscr{L}$  is different from the construction (\ref{det_wol_prod}) (cf.  (\ref{quil_wol_norm})).
		This is due to the fact that the geometric setting in this section is different from Section \ref{sect_intro}, as here, for example, the assumption (\ref{suppos_restr}) is not satisfied for the metric induced by $g^{TX_t}$, thus, the Wolpert norm is not well-defined and the analogue of the norm (\ref{quil_wol_norm_rest}), doesn't make any sense.
		\par 
		We introduce the complex line
		\begin{equation}
			\mathscr{L}' 
			:= 
			\lambda(\rho^* \xi)^{12} 
			\otimes 
			(\otimes_{i = 1}^{2k} \det \rho^* (\xi)|_{P_i})^{6} 
			\otimes 
			(\otimes_{i = 1}^{k}(\omega_{Y_0}|_{P_{2i - 1}} \otimes \omega_{Y_0}|_{P_{2i}}))^{- 2 \cdot \rk{\xi}}.
		\end{equation}
		We denote by $\norm{\cdot}_{\mathscr{L}'}$ the norm on the complex line $\mathscr{L}'$ which is induced by $\norm{\cdot}_Q(g^{TY_0}, \rho^* (h^{\xi}))$, $h^{\xi}$ and $\norm{\cdot}_{\Sigma_{X/S}/X}$.
		Analogically to (\ref{isom_main}), one has the following canonical isomorphism
		\begin{equation}\label{isom_compact_case_bismut}
			\mathscr{L}|_{\Delta} 
			\to 
			\mathscr{L}' 
			\otimes 
			(\otimes_{i = 1}^{k} \mathscr{O}_{Q_i})^{12 \cdot \rk{\xi}}.
		\end{equation}
		Now we can state the main result of this section.
		\begin{thm}\label{thm_bismut_rest}
			The norm $\norm{\cdot}_{\mathscr{L}}^{{\rm{cmp}}}$ extends continuously over $S$. Moreover, under the isomorphism (\ref{isom_compact_case_bismut}), the following identity holds
			\begin{equation}\label{eq_bismut_rest}
				\norm{\cdot}_{\mathscr{L}}^{{\rm{cmp}}}|_{\Delta} = \exp \Big(\rk{\xi} \cdot k \cdot \big(24 \zeta'(-1) - 6 \log(2 \pi) \big) \Big) \cdot \norm{\cdot}_{\mathscr{L}'}^{{\rm{cmp}}}.
			\end{equation}
		\end{thm}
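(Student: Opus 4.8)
The plan is to deduce Theorem \ref{thm_bismut_rest} from Bismut's original result \cite[Theorem 0.3]{BisDegQuil} in two movements: first, I would show that the defect constant relating the two sides of the isomorphism (\ref{isom_compact_case_bismut}) is \emph{independent} of the metric $g^{TX}$ compatible with the complex structure, and only afterwards evaluate it on a convenient model. The guiding observation is that the Kähler hypothesis on the total space is used in Bismut's theorem but plays no role in the variational (anomaly) analysis, which is entirely fibrewise; since every fibre $X_t$ is a curve, the induced metric $g^{TX_t}$ is automatically Kähler, so the anomaly formula applies with no global Kähler assumption. This also establishes the continuous extension, as continuity of the fibrewise anomaly integral will follow from Proposition \ref{prop_int_loglog}.

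For the universality step I would fix two metrics $g^{TX}_0$, $g^{TX}_1$ on $X$, both compatible with the complex structure and inducing the \emph{same} Hermitian metric $h^\xi$ on $\xi$, and compare the induced norms on $\mathscr{L}$. Over each nonsingular fibre $X_t$, $t \neq 0$, the difference $\log \norm{\cdot}_Q(g^{TX_t}_1, h^\xi) - \log \norm{\cdot}_Q(g^{TX_t}_0, h^\xi)$ is given by the classical anomaly formula of \cite{BGS1}; since $h^\xi$ is unchanged, only the $\widetilde{\td}$-term survives, producing $\int_{X_t} \widetilde{\td}(\omega_{X/S}^{-1}, g^{TX_t}_0, g^{TX_t}_1)\, \ch(\xi, h^\xi)$, and the analogous identity holds on $Y_0$ for $\rho^*(g^{TX}_j)$. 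Because the Bott-Chern integrand is a smooth $(1,1)$-form on $X \setminus \Sigma_{X/S}$ whose coupling with vertical vector fields is continuous, Proposition \ref{prop_int_loglog} (in its smooth incarnation, with $f$ bounded and $\alpha$ of no Poincaré growth) shows that $\pi_*$ of the fibrewise integral extends continuously over $S$ and that its value at $0$ equals the integral of the same form over $Y_0$.

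I would then assemble these into the difference of the full quantities $\log(\norm{\cdot}_{\mathscr{L}}|_\Delta / \norm{\cdot}_{\mathscr{L}'})$ attached to $g^{TX}_1$ and $g^{TX}_0$. The divisor norm $\norm{\cdot}_{\Delta}^{\rm{div}}$ and the fibre contributions $\det \rho^*(\xi)|_{P_i}$ are metric-independent and cancel. What remains is the limiting $X_t$-integral, the corresponding $Y_0$-integral entering $\norm{\cdot}_{\mathscr{L}'}$, and the change in the node norm $\norm{\cdot}_{\Sigma_{X/S}/X}$ of (\ref{eq_norm_det_ai_defn}). The crux is to prove that the limit of the $X_t$-integral cancels the $Y_0$-integral up to a boundary term localized at the nodes $Q_i$, and that this term coincides (with the power $2\cdot\rk{\xi}$ dictated by the definition of $\mathscr{L}'$) with the variation of $\log\norm{\cdot}_{\Sigma_{X/S}/X}$, i.e. with $\tfrac{1}{2}\sum_i \log\big((a^0_i c^0_i - |b^0_i|^2)/(a^1_i c^1_i - |b^1_i|^2)\big)(Q_i)$ in the notation of (\ref{eq_defn_ai_bi_ci}). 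Granting this matching, the defect constant agrees for $g^{TX}_0$ and $g^{TX}_1$ and is therefore universal; evaluating it on a single projective plumbing family (where the total space is Kähler and Bismut's \cite[Theorem 0.3]{BisDegQuil} applies verbatim) yields exactly $\exp\big(\rk{\xi}\cdot k \cdot (24\zeta'(-1) - 6\log(2\pi))\big)$, and the per-node universality promotes the one-node value to the general case.

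The main obstacle is precisely the node-matching of the previous paragraph. Near $Q_i$ the fibrewise metric $g^{TX_t}$ degenerates as the fibre pinches: the relation $z_0^i z_1^i = t$ forces the induced fibre metric to blow up as $t \to 0$, so the Bott-Chern integrals over $X_t$ and over $Y_0$ differ by a genuinely divergent-looking local contribution that must be regularized and identified with the determinant factor $a_i c_i - |b_i|^2$. The feature that is new relative to the Kähler case is the off-diagonal coefficient $b_i = h^{TX}(\partial/\partial z_0^i,\, \partial/\partial z_1^i)$, which in Bismut's setting can often be normalized away; here one must carry the cross term through the local computation in the coordinates $(z_0^i, z_1^i)$ and verify that the node correction is governed by the full Gram determinant $a_i c_i - |b_i|^2$ of $h^{TX}$ on the normal directions, rather than merely by $a_i c_i$. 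This is where the non-Kähler generalization genuinely differs from \cite[Theorem 0.3]{BisDegQuil}.
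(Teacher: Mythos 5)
Your overall architecture is the same as the paper's: compare an arbitrary compatible metric to a Kähler one through the anomaly formula of \cite{BGS3} (a Kähler metric exists locally over the base because $\pi$ is locally projective, \cite[Proposition 3.4]{BisBost}), invoke Bismut's \cite[Theorem 0.3]{BisDegQuil} in the Kähler case, and identify the node correction with the variation of the Gram determinant $a_i c_i - |b_i|^2$ of (\ref{eq_defn_ai_bi_ci}), cross term included. However, there is a genuine gap, and it sits exactly where you write ``granting this matching'': the node-matching is not a deferrable bookkeeping step, it \emph{is} the theorem. Nothing in your proposal proves that the localized defect of the anomaly integral equals $\frac{\rk{\xi}}{6}\log(a_i c_i - |b_i|^2)(Q_i)$. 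In the paper this is the statement (\ref{eq_enough_prove_bismut}), and its proof occupies the analytic core of the argument: the expansion (\ref{eq_c1_form_pullback_smooth}) of $c_1(TX/S,g^{TX})|_{X_t}$ near $Q_i$, the rescaling $y_0 = z_0|t|^{-1/2}$, the substitution $x_0 = y_0^2$ (with the factor of $2$ coming from the double covering in (\ref{eq_bismut_thm_final_statement})), the rotation making $b_i$ real, and finally the closed-form evaluation (\ref{eq_double_integral_difficult}) of the resulting double integral as $4\pi\log(ac-b^2)$. Without this computation (or an equivalent argument), the value of the defect, hence the constant in (\ref{eq_bismut_rest}), is unproved.

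A second, related problem: the tool you invoke in your second paragraph cannot deliver the continuity statement you attribute to it, and its failure is precisely the source of the defect. For the metric on $\omega_{X/S}$ induced by a smooth metric on $X$, the curvature restricted to $X_t$ concentrates a fixed amount of mass at the scale $|z^i_0|\sim|t|^{1/2}$ near each node (this is visible in (\ref{eq_c1_form_pullback_smooth})); in particular this form does not have Poincaré growth along $\pi^{-1}(\Delta)$ (test $|z_1| = |z_0|$), so the hypotheses of Proposition \ref{prop_int_loglog} fail, and indeed its conclusion is false here: $\lim_{t\to 0}\int_{X_t} f\alpha$ differs from $\int_{Y_0}\rho^*(f\alpha)$ by a node term weighted by the rescaled profile of $f$. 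Consequently your second paragraph (limit equals the $Y_0$-integral, no defect) contradicts your third paragraph (nonzero node defect matching (\ref{eq_norms_det_compar1})); the third is correct, the second is false whenever the Gram determinants of the two metrics differ at a node. Two minor further points: metric-independence of the defect is established for a \emph{fixed} family, so the constant must be evaluated with a Kähler metric on the same family rather than on a separate model plumbing family, unless you separately prove cross-family universality; and even in the Kähler case, extracting the constant $24\zeta'(-1)-6\log(2\pi)$ from \cite[Theorem 0.3]{BisDegQuil} requires the normalization bookkeeping the paper performs, namely the $2\pi$ discrepancy in the $L^2$-norms via $\chi(X_t,\xi|_{X_t}) = \chi(Y_0,\rho^*\xi) - k\cdot\rk{\xi}$, the comparison (\ref{eq_rel_smooth_non_smooth}) of $\norm{\cdot}^{\rm{div}}_{\Delta}$ with a smooth norm, and the norm $\norm{d\pi^2}$ of the isomorphism (\ref{eq_isom_poinc_bismut_case}).
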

		\begin{proof}
			First of all, let's assume that $g^{TX}$ is Kähler. Then we argue that Theorem \ref{thm_bismut_rest} is just a restatement of \cite[Theorem 0.3]{BisDegQuil} due to Bismut.
			\par To see this, let's fix a holomorphic coordinate $t$ on $S$ such that $|\Delta| = \{ t = 0 \}$. We denote by $\norm{\cdot}_{\Delta}$ the Hermitian norm on $\mathscr{O}_S(\Delta)$, characterized by
			\begin{equation}\label{eq_defn_norm_delta}
				\| s_{\Delta} / t^k \|_{\Delta} = 1.
			\end{equation}
			As ${\rm{div}} (s_{\Delta}) = k \{ 0 \}$, we deduce that $\norm{\cdot}_{\Delta}$ is smooth over $S$.
			By the definition of the singular norm $\norm{\cdot}_{\Delta}^{{\rm{div}}}$ from (\ref{defn_norm_D}), by (\ref{eq_defn_norm_delta}), we have
			\begin{equation}\label{eq_rel_smooth_non_smooth}
				\norm{\cdot}_{\Delta}^{{\rm{div}}} = |t|^{-k} \cdot \norm{\cdot}_{\Delta}.
			\end{equation}
			By (\ref{eq_proj_gen_gft}), the isomorphism (\ref{isom_rest_w_div}) specifies in our case to
			\begin{equation}\label{eq_isom_poinc_bismut_case}
			\begin{aligned}
				&
				\mathscr{O}_S(\Delta)|_{|\Delta|} \to 
				\big(
				\otimes_{i = 1}^{k}(\omega_{Y_0}|_{P_{2i - 1}} \otimes \omega_{Y_0}|_{P_{2i}}) 
				\big)^{- 1} 
				,
				\\
				&
				\Big( \frac{s_{\Delta}}{t^k} \Big)|_{t = 0} \mapsto \big(\otimes_{i = 1}^{k} (dz^i_{0}|_{P_{2i - 1}} \otimes dz^i_{1}|_{P_{2i}} ) \big)^{-1}.
			\end{aligned}
			\end{equation}
			We denote by $\| d\pi^2 \|$ the norm of the isomorphism (\ref{eq_isom_poinc_bismut_case}). By (\ref{eq_defn_norm_delta}), we have
			\begin{equation}\label{eq_defn_dpi2}
				\norm{d\pi^2} 
				:= 
				\Big(
				\big \| 
					\otimes_{i = 1}^{k} (dz^i_{0}|_{P_{2i - 1}} \otimes dz^i_{1}|_{P_{2i}} ) 
				\big \|_{\Sigma_{X/S}/X}
				\Big)^{-1}.
			\end{equation}
			We note that due to our normalization of the $L^2$-norm, (\ref{defn_L_2}), the difference between our definition of the Quillen norm, and the one from \cite{BGS2}, \cite{BGS3}, \cite{BisDegQuil}, which we denote by $\norm{\cdot}_Q^{BGS}$, is 
			\begin{equation}
				\norm{\cdot}_Q(g^{TX_t}, h^{\xi}) = \exp \big( \log(2 \pi) \cdot \chi(X_t, \xi|_{X_t}) / 2 \big) \cdot \norm{\cdot}_Q^{BGS}(g^{TX_t}, h^{\xi}),
			\end{equation}
			where $\chi(X_t, \xi|_{X_t})$ is the Euler characteristic, given by
			\begin{equation}
				\chi(X_t, \xi|_{X_t}) = \dim H^0(X_t, \xi|_{X_t}) -  \dim H^1(X_t, \xi|_{X_t}).
			\end{equation}
			By Riemann-Roch theorem, the value $\chi(X_t, \xi|_{X_t})$ is topological, and thus, by Ehresmann theorem, it is constant over $S \setminus |\Delta|$.
			\par 
			We denote by $\norm{\cdot}_Q^{\xi}(g^{TY_0}, \rho^* (h^{\xi}))$ the norm on the complex line $\lambda(j^*\xi) \otimes (\otimes_{i = 1}^{2k} \det \xi|_{P_i})^{6}$ induced by $\norm{\cdot}_Q(g^{TY_0}, \rho^* (h^{\xi}))$  and $h^{\xi}$.
			Similarly, due to our normalization of the $L^2$-norm, (\ref{defn_L_2}), the difference between our definition of the norm $\norm{\cdot}_Q^{\xi}(g^{TY_0}, \rho^* (h^{\xi}))$, and the one from \cite{BGS2}, \cite{BGS3}, \cite{BisDegQuil}, which we denote by $\norm{\cdot}_Q^{\xi, BGS}(g^{TY_0}, \rho^* (h^{\xi}))$, is 
			\begin{equation}
				\norm{\cdot}_Q^{\xi}(g^{TY_0}, \rho^* (h^{\xi})) = \exp \big( \log(2 \pi) \cdot \chi(Y_0, \rho^*(\xi)|_{Y_0}) / 2 \big) \cdot \norm{\cdot}_Q^{\xi, BGS}(g^{TY_0}, \rho^* (h^{\xi})).
			\end{equation}
			\par We fix a smooth frame $\upsilon$ of $\lambda(j^*\xi)$ over $S$. Then by \cite[Theorem 0.3, (0.5), the fact that the genus $E$ is additive]{BisDegQuil}, under the isomorphisms (\ref{isom_det_restr}), (\ref{isom_kappa_sigma}), the following identity holds
			\begin{multline}\label{eq_lim_bis_gen_1111}
				\lim_{t \to 0} \Big( \log \big( \| \upsilon(t) \|_Q^{BGS}(g^{TX_t}, h^{\xi}) \big) - \frac{\rk{\xi}}{6} \log \big( \norm{s_{\Delta}(t)}_{\Delta} \big) \Big)
				\\
				=
				\log \big( \| \upsilon(0) \|_Q^{\xi, BGS}(g^{TY_0}, \rho^* (h^{\xi})) \big)
				+
				\frac{\rk{\xi}}{6} \log \norm{d\pi^2}  + 2 \zeta'(-1) \cdot k \cdot \rk{\xi}.
			\end{multline}
			Now, by (\ref{eq_sh_ex_seq}) and the induced long exact sequence, we deduce
			\begin{equation}
				\chi(X_t, \xi|_{X_t}) = \chi(Y_0, \rho^*(\xi)) - k \cdot \rk{\xi}.
			\end{equation}
			However, by (\ref{eq_rel_smooth_non_smooth}), (\ref{eq_defn_dpi2}) and (\ref{eq_lim_bis_gen_1111}), we deduce that 
			\begin{multline}\label{eq_final_proof_bismut}
				\lim_{t \to 0} \Big( \log\big( \| \upsilon(t) \|_Q(g^{TX_t}, h^{\xi}) \big)
				+
				\frac{\rk{\xi}}{6} \log \Big( \Big \| \frac{s_{\Delta}(t)}{t^k}  \Big \|_{\Delta}^{{\rm{div}}} \Big) \Big)
				=
				 \log\big( \| \upsilon(0) \|_Q^{\xi}(g^{TY_0}, \rho^* (h^{\xi})) \big)
				\\
				-
				\frac{\rk{\xi}}{6} 
				\log  \Big( \big \| 
					\otimes_{i = 1}^{k} 
					(
						dz^i_{0}|_{P_{2i - 1}}
						\otimes 
						dz^i_{1}|_{P_{2i}}
					) 
				\big \|_{\Sigma_{X/S}/X} \Big) 
				\\
				+ \Big( 2 \zeta'(-1) - \frac{\log(2\pi)}{2} \Big) \cdot k \cdot \rk{\xi},
			\end{multline}
			which means, in particular, that the norm $\norm{\cdot}_{\mathscr{L}}$ extends continuously over $S$. 
			Moreover, as by (\ref{eq_isom_poinc_bismut_case}), the isomorphism (\ref{isom_compact_case_bismut}) is given in our situation by
			\begin{equation}
				\Big( 
				\upsilon^{12} 
				\otimes 
				\Big(
				\frac{s_{\Delta}}{t^k}
				\Big)^{2 \cdot \rk{\xi}}
				\Big)|_{|\Delta|} 
				\mapsto 
				\upsilon(0)^{12} 
				\otimes 
				\Big( 
				\otimes_{i = 1}^{k} 
				\big(
					dz^i_{0}|_{P_{2i - 1}}
					\otimes 
					dz^i_{1}|_{P_{2i}}
				\big) 
				\Big)^{-2 \cdot \rk{\xi}},
			\end{equation}
			the continuous extension satisfies (\ref{eq_bismut_rest}) by (\ref{eq_final_proof_bismut}). 
			\par Now let's prove (\ref{eq_bismut_rest}) for metric $g^{TX}_{0}$, which is not necessarily Kähler.
			We note that $\pi$ is locally projective (cf. Bismut-Bost \cite[Proposition 3.4]{BisBost}), thus for some small neighborhood $U$ of $0 \in S$, we may find a Kähler metric $g^{TX}$ over $\pi^{-1}(U)$. 
			As the statement of Theorem \ref{thm_bismut_rest} is local over the base, without losing the generality, we suppose from now on that $g^{TX}$ is defined over $X$.
			We denote by $\norm{\cdot}_{\mathscr{L}}^{0}$ the norm on $\mathscr{L}$, induced by $g^{TX}_{0}$. 
			The idea of the proof is to use the statement (\ref{eq_final_proof_bismut}) and the anomaly formula of Bismut-Gillet-Soulé \cite{BGS3} (cf. Theorem \ref{thm_anomaly_cusp} for $m = 0$) to relate the norms $\norm{\cdot}_{\mathscr{L}}^{0}$ and $\norm{\cdot}_{\mathscr{L}}$, and to study the limit of the right-hand side of this formula near the locus of singular curves.
			\par 
			We denote by $\norm{\cdot}_{\Sigma_{X/S}/X}^{0}$ the norm on the line bundle $\otimes_{i = 1}^{k}(\omega_{Y_0}|_{P_{2i - 1}} \otimes \omega_{Y_0}|_{P_{2i}})$, induced by $g^{TX}_{0}$ as in (\ref{eq_norm_det_ai_defn}). 
			Similarly to (\ref{eq_defn_ai_bi_ci}), we denote by $a_i^{0}$, $b_i^{0}$, $c_i^{0}$ the analogical functions associated with $g^{TX}_{0}$. 
			\par We argue that without losing the generality, we may suppose that $a_i^{0}, c_i^{0} = 1, b_i^{0} = 0$. 
			This is true since we could fix a Riemannian metric $g^{TX}_{*}$ which is compatible with the complex structure satisfying this assumption and then simply apply Theorem \ref{thm_bismut_rest} twice for $g^{TX}_{*}$ and $g^{TX}$ and for $g^{TX}_{*}$ and $g^{TX}_0$.
			By combining the two results, we would get the original statement.
			\par
			Now, by (\ref{eq_norm_det_ai_defn}), we trivially have
			\begin{equation}\label{eq_norms_det_compar1}
				2\log \Big( \norm{\cdot}_{\Sigma_{X/S}/X} / \norm{\cdot}_{\Sigma_{X/S}/X}^{0}  \Big)
				=	
				- \sum_{i = 1}^{k} \log \big( a_i c_i - |b_i|^2 \big)(Q_i).
			\end{equation}
			Let the differential form $F$ on $X$ be given by
			\begin{equation}
				F = \widetilde{\td} \big(TX/S, g^{TX/S}, g^{TX/S}_{0} \big) \ch \big(\xi, h^{\xi} \big),
			\end{equation}
			where $TX/S$ is the vertical tangent bundle of $\pi$, and $g^{TX/S}, g^{TX/S}_{0}$ are the Hermitian norms on $TX/S$ induced by $g^{TX}, g^{TX}_{0}$.
			By the anomaly formula of Bismut-Gillet-Soulé \cite{BGS3} (cf. Theorem \ref{thm_anomaly_cusp} for $m = 0$), over $X \setminus \Sigma_{X/S}$, we have
			\begin{equation}\label{eq_anomal_appl_norml}
				\log \Big( \norm{\cdot}_{\mathscr{L}} / \norm{\cdot}_{\mathscr{L}}^{0}  \Big)(t) 
				= 
				6 \int_{X_t} F.
			\end{equation}
			Now, as the map $\pi$ is a submersion away from $\Sigma_{X/S}$, and the metrics $g^{TX}$, $g^{TX}_{0}$ are smooth over $X$, by (\ref{eq_final_proof_bismut}), (\ref{eq_norms_det_compar1}) and (\ref{eq_anomal_appl_norml}), to prove Theorem \ref{thm_bismut_rest}, it is enough to prove that for any $i = 1,\ldots, k$, the following holds
			\begin{equation}\label{eq_enough_prove_bismut}
				\lim_{\epsilon \to 0} \lim_{t \to 0} \int_{X_t \cap U(Q_i, \epsilon)} F 
				= 
				\frac{\rk{\xi}}{6} 
				\log \big( a_i c_i - |b_i|^2 \big)(Q_i).
			\end{equation}
			For brevity, we fix $1 \leq i \leq k$, and denote $z_0 := z_0^{i}$, $z_1 := z_1^{i}$.
			As $z_0 \frac{\partial}{\partial z_0} - z_1 \frac{\partial}{\partial z_1}$ is a local holomoprhic frame of $TX/S$, locally around $Q_i$, we have
			\begin{equation}
				g^{TX/S} \Big( z_0 \frac{\partial}{\partial z_0} - z_1 \frac{\partial}{\partial z_1}, 
				z_0 \frac{\partial}{\partial z_0} - z_1 \frac{\partial}{\partial z_1} \Big)
				=
				a_i |z_0|^{2} + c_i |z_1|^{2} - b_i z_0 \overline{z}_1 -  \overline{b}_i z_1 \overline{z}_0.
			\end{equation}
			By using the fact that $z_0 z_1 = t$ over $X_t$, we deduce that locally around $Q_i$, we have
			\begin{multline}\label{eq_c1_form_pullback_smooth}
				c_1(TX/S, g^{TX})|_{X_t} =
				\frac{\partial \dbar}{2 \pi \imun} \Big( \log 
				\big(
					a_i |z_0|^{2} + c_i |z_1|^{2} - b_i z_0 \overline{z}_1 -  \overline{b}_i z_1 \overline{z}_0
				\big) 
				\Big)
				\\
				= \frac{4 (a_i c_i - |b_i|^2) |z_0|^2 |t|^2}{(a_i |z_0|^{4} + c_i |t|^{2} - b_i z_0^2 \overline{t} - \overline{b}_i \overline{z}_0^2 t )^2} \frac{dz_0 d \overline{z}_0}{2 \pi \imun} + 
				o \bigg(
					\Big( \frac{|t|^2}{|z_0|^{6}} + \frac{|z_0|^2}{|t|^{2}} \Big)dz_0 d\overline{z}_0
				\bigg).
			\end{multline}
			As we are only interested in the limit (\ref{eq_enough_prove_bismut}), the calculation localizes around $Q_i$, and only the highest order terms matter. Thus, in the calculations, we may suppose that $a_i, b_i, c_i$ are constants equal to $a_i(Q_i), b_i(Q_i), c_i(Q_i)$ respectively, and we may suppress the $o$-term.
			\par 
			By making the change of variables $y_0  = z_0 |t|^{-1/2}$ and using (\ref{ch_bc_0}), (\ref{ch_bc_2}), we deduce that as $t \to 0$ and $\epsilon \to 0$, we have
			\begin{multline}
				\int_{X_t \cap U(Q_i, \epsilon)} F 
				\sim
				-
				\frac{\rk{\xi}}{12}
				\int_{|t|^{1/2} / \epsilon < |y_0| < \epsilon / |t|^{1/2}}
				\bigg(
				\log 
					\frac{a_i |y_0|^{4} + c_i - b_i y_0^2 - \overline{b}_i \overline{y}_0^2 }{|y_0|^{4} + 1}
				\bigg)
				\\
				\cdot
				\bigg(
					\frac{(4 a_i c_i  - 4 |b_i|^2) |y_0|^2}{(a_i |y_0|^{4} + c_i - b_i y_0^2 - \overline{b}_i \overline{y}_0^2 )^2}
					+
					\frac{4 |y_0|^2}{(|y_0|^{4} + 1)^2}
				\bigg)
				\frac{dy_0 d \overline{y}_0}{2 \pi \imun}.
			\end{multline}
			We make the change of variables $x_0 := y_0^2$. The variable $x_0$ “turns around" $\comp$ twice, which kills one of the two additional factors $2$ appearing in the denominator in the following identity
			\begin{multline}\label{eq_bismut_thm_final_statement}
				\int_{X_t \cap U(Q_i, \epsilon)} F 
				\sim
				- \frac{\rk{\xi}}{24}
				\int_{|t|^{1/4} / \epsilon^{1/2} < |x_0| < \epsilon^{1/2} / |t|^{1/4}}
				\bigg(				
				\log 
					\frac{a_i |x_0|^{2} + c_i - b_i x_0 - \overline{b}_i \overline{x}_0 }{|x_0|^{2} + 1}
				\bigg)
				\\
				\cdot
				\bigg(
					\frac{(4 a_i c_i - 4 |b_i|^2) }{(a_i |x_0|^{2} + c_i - b_i x_0 - \overline{b}_i \overline{x}_0 )^2}
					+
					\frac{4}{(|x_0|^{2} + 1)^2}
				\bigg)
				\frac{dx_0 d \overline{x}_0}{2 \pi \imun}.
			\end{multline}
			By making the change of variables $x_0 := x_0 \cdot \exp(\arg(b_i))$, we see that the right-hand side of (\ref{eq_bismut_thm_final_statement}) depends only on $|b_0| \in \real$. 
			Thus, without losing the generality, we may assume that $b_0 \in \real$.
			By writing $x_0 = x + \imun y$, from (\ref{eq_bismut_thm_final_statement}), we see
			\begin{multline}\label{eq_bismut_thm_final_statement2}
				\int_{X_t \cap U(Q_i, \epsilon)} F 
				\sim
				\frac{\rk{\xi}}{24 \pi}
				\int_{|t|^{1/4} / \epsilon^{1/2} < x^2 + y^2 < \epsilon^{1/2} / |t|^{1/4}}
				\bigg(
				\log 
					\frac{a_i (x^2 + y^2) + c_i - 2 b_i x}{x^2 + y^2 + 1}
				\bigg)
				\\
				\cdot
				\bigg(
					\frac{(4 a_i c_i - 4 b_i^2) }{(a_i (x^2 + y^2) + c_i - 2 b_i x)^2}
					+
					\frac{4}{(x^2 + y^2 + 1)^2}
				\bigg)
				dx dy.
			\end{multline}
			Now, we remark that by switching to polar coordinates, changing the integration over radius by the integration over its square and applying tedious derivation by parts, for $a, c > 0$, $b \in \real$, $ac - b^2 > 0$, we get the following identity
			\begin{multline}\label{eq_double_integral_difficult}
				\int_{-\infty}^{+\infty} 
				\int_{-\infty}^{+\infty} 
				\bigg(
				\log
					\frac{a (x^2 + y^2) + c  - 2 b x }{x^2 + y^2 + 1}
				\bigg)
				\bigg(
					\frac{4 ac - 4 b^2}{(a (x^2 + y^2) + c  - 2 b x )^2}
					+
					\frac{4}{(x^2 + y^2 + 1)^2}
				\bigg)
				dx d y 
				\\
				= 4 \pi \log(ac - b^2).
			\end{multline}
			By (\ref{eq_bismut_thm_final_statement2}) and (\ref{eq_double_integral_difficult}), we get (\ref{eq_enough_prove_bismut}), which finishes the proof of Theorem \ref{thm_bismut_rest}.
		\end{proof}

	\subsection{Univesality in restriction theorem, proof of Theorems  \ref{thm_full_comp_pert}, \ref{thm_imm_thm_upto_const}}\label{sect_final_sect}	
	The goal of this section is to prove Theorems \ref{thm_full_comp_pert}, \ref{thm_imm_thm_upto_const}. 
	\par
	In fact, during the proof of Theorem \ref{thm_imm_thm_upto_const}, we will establish Theorem \ref{thm_full_comp_pert} for some metric constructed in a very special way. After this, to prove Theorem \ref{thm_full_comp_pert} we will only need to prove that its statement doesn't depend on the choice of the metric. To see this, we use the anomaly formula, Theorem \ref{thm_anomaly_cusp}, and relative compact perturbation theorem, Theorem \ref{thm_comp_appr}.
	\par 
	The proof of Theorem \ref{thm_imm_thm_upto_const} is based on Theorem \ref{thm_bismut_rest}.
	\begin{proof}[Proof of Theorem \ref{thm_imm_thm_upto_const}]
			The main idea of the proof is to construct a metric $\norm{\cdot}_{X/S}^{\varkappa}$ on $\omega_{X/S}$ over $X \setminus |\pi^{-1}(\Delta)|$, for which the assumptions (\ref{suppos_s3}), (\ref{suppos_restr}) hold and to show that Theorem \ref{thm_imm_thm_upto_const} holds for $\norm{\cdot}_{X/S}^{\varkappa}$ but instead of $C_{-n}$, we have an undetermined constant $A_{-n}$. 
			Then, by the anomaly formula and Proposition \ref{prop_int_loglog}, we deduce that Theorem \ref{thm_imm_thm_upto_const} holds for any norm $\norm{\cdot}_{X/S}^{\omega}$, satisfying the assumptions (\ref{suppos_s3}), (\ref{suppos_restr}).
			\par The fact that the constant $A_{-n}$ is universal is a consequence of a number of facts, which we now state.
			First, from Proposition \ref{prop_coord}, we know that the complex structure near $\Sigma_{X/S}$ is universal, and doesn't depend on  $\pi$, $X$, $\ldots$
			Second, the metric $\norm{\cdot}_{X/S}^{\varkappa}$ in the neighborhood of $\Sigma_{X/S}$ is also universal by the construction, and the construction of it in this neighborhood is local and, thus, doesn't depend on $\pi$, $X$, $\ldots$ as well.
			Third, we prove that the metric $\norm{\cdot}_{X/S}^{\varkappa}$ can be modified in the neighborhood of $\Sigma_{X/S}$ to another metric $\norm{\cdot}_{X/S}^{\sim}$ which satisfies the assumptions of Theorem \ref{thm_bismut_rest}.
			Fourth, remark that the constant analogical to $A_{-n}$ in Theorem \ref{thm_bismut_rest} is universal (it is equal to $24 \zeta'(-1) - 6 \log(2 \pi)$).
			Finally, the difference between the Quillen metric on the singular fiber associated with $\norm{\cdot}_{X/S}^{\sim}$ according to the procedure as in Theorem \ref{thm_bismut_rest} and the Quillen metric on the same fiber associated with $\norm{\cdot}_{X/S}^{\varkappa}$ according to the procedure as in Theorem \ref{thm_imm_thm}, is universal by Theorem \ref{thm_comp_appr}.
			\par 
			More precisely, we proceed in the following way. First, we construct a Riemannian metric $g^{TX}_{\sim}$ compatible with the complex structure on $X$. 
			This metric will be trivial in the neighborhood of $\Sigma_{X/S}$. 
			By using Theorem \ref{thm_bismut_rest}, we will study the asymptotics of the norm $\norm{\cdot}_{\mathscr{L}_n}^{\sim}$ induced on the line bundle $\mathscr{L}_n$ by $\norm{\cdot}_{X/S}^{\sim}$.
			Then by modifying locally this metric in the neighborhood of $\Sigma_{X/S}$, we construct a family of metrics $g^{TX_t}_{\varkappa}$ on $X_t$ for $t \in S \setminus |\Delta|$, which degenerates to the hyperbolic metric at the singular fiber through the family of degenerating hyperbolic cylinders.
			The construction of the metric $g^{TX_t}_{\varkappa}$ is highly motivated by Theorem \ref{thm_wolp_exp}. 
			By applying anomaly theorem and the previous result on the asymptotics of $\norm{\cdot}_{\mathscr{L}_n}^{\sim}$, we compute the asymptotics of the norm $\norm{\cdot}_{\mathscr{L}_n}^{\varkappa}$ induced on the line bundle $\mathscr{L}_n$ by $g^{TX_t}_{\varkappa}$. As our construction is local around $\Sigma_{X/S}$, the asymptotic is independent of the geometry of $\pi : X \to S\ldots$
			\par 
			Now, let local coordinates $z^{j}_{0}, z^{j}_{1}$ (cf. (\ref{eq_defn_pi})) around $Q_j$ be as in (\ref{eq_proj_gen_gft}). 
			We suppose that $D(1) \subset \Im (z^{j}_{0}) \cap \Im (z^{j}_{1})$. 
			This is merely a question of normalization, as for any coordinates $z^{j}_{0}, z^{j}_{1}$ of $X$ and $t$ of $S$, and for any $a \in \comp$, we may change the coordinates by $a \cdot z^{j}_{0}, a \cdot z^{j}_{1}$ and $a^2 \cdot t$, and the identity  (\ref{eq_proj_gen_gft}) would be preserved.
		We specify the function $\nu$ from (\ref{eq_defn_nu}) as follows
		\begin{equation}\label{eq_defn_nu_specific}
			\nu(x) = 
			\begin{cases} 
      			\hfill 0, & \text{ for } x \in X \setminus (\cup_{i = 1}^{k} U(Q_i, 1)), \\
      			\hfill 1- \nu_0(|z^{i}_{0}|^2 + |z^{i}_{1}|^2 ),  & \text{ for } x \in U(Q_i, 1). 
 			\end{cases}
		\end{equation}
		By (\ref{defn_nu}), the function (\ref{eq_defn_nu_specific}) satisfies (\ref{eq_defn_nu}) for $\epsilon = 1/2$.
			\par 
			We note that $\pi$ is locally projective (cf. Bismut-Bost \cite[Proposition 3.4]{BisBost}), thus, there is a neighborhood $U$ of $0 \in S$ such that there is a Kähler metric $g^{TX}_{0}$ over $\pi^{-1}(U)$. 
			As the statement of Theorem \ref{thm_bismut_rest} is local over the base, without losing the generality, we may suppose from now on that $g^{TX}_{0}$ is defined over $X$.
			\begin{sloppypar}
				We define the Riemannian metric $g^{TX}_{\sim}$ over $X$ so that it coincides with $g^{TX}_0$ over $X \setminus (\cup_{i = 1}^{k} U(Q_i, 1))$, and over $U(Q_i, 1)$ it is given by 
				\begin{equation}\label{defn_gtx_1}
					g^{TX}_{\sim} = (1 - \nu) \cdot g^{TX}_0 + \nu  \cdot ( |dz^i_{0}|^2 + |dz^i_{1}|^2 ).
				\end{equation}
				We note that $g^{TX}_{\sim}$ is not necessarily Kähler, but it is trivially compatible with the complex structure of $X$.
				We denote by $g^{TX_t}_{\sim}$ the induced Riemannian metric on $X_t$, $t \in S \setminus |\Delta|$, and by $g^{TY_0}_{\sim}$ the induced Riemannian metric on $Y_0$, constructed as in Section \ref{sect_bismut_proof}, i.e.  by $g^{TY_0}_{\sim} = \rho^*(g^{TX}_{\sim})$, where $\rho : Y_0 \to X_0$ is the normalization map. 
				Since $g^{TX}_{\sim}$ is compatible with the complex structure of $X$, we see that the metrics $g^{TX_t}_{\sim}$, $g^{TY_0}_{\sim}$ are Kähler. 
			\end{sloppypar}
			\begin{sloppypar}
				We endow $\omega_{X/S}$ with the Hermitian norm $\norm{\cdot}_{X/S}^{\sim, {\rm{ind}}}$ induced by $g^{TX}_{\sim}$ over $X \setminus \Sigma_{X/S}$. 
			Let $\widetilde{\nu}: X \to [0, 1]$ be defined as $\nu$ in (\ref{eq_defn_nu_specific}), where in place of $\nu_0(\cdot)$, we put $\nu_0(4 \cdot)$. 
			Then $\widetilde{\nu}(x) = 1$ for $x \in X \setminus (\cup_{i = 1}^{k} U(Q_i, 1/2))$.
			We define the Hermitian norm $\norm{\cdot}_{X/S}^{\sim}$ on $\omega_{X/S}$ over $X$ as follows. Over $X \setminus (\cup_{i = 1}^{k} U(Q_i, 1/2))$, we demand it to be equal to $\norm{\cdot}_{X/S}^{\sim, {\rm{ind}}}$, and over $U(Q_i, 1/2)$, we define it by
			\begin{equation}\label{eq_defn_tilde_norm}
				\big\| d z^i_{0} / z^i_{0} \big\|_{X/S}^{\sim} = (1 - \widetilde{\nu}) \cdot \big\| d z^i_{0} / z^i_{0} \big\|_{X/S}^{\sim, {\rm{ind}}} + \widetilde{\nu}.
			\end{equation}
			Trivially, the Hermitian norm $\norm{\cdot}_{X/S}^{\sim}$ on $\omega_{X/S}$ is smooth over $X$.
			Moreover, it is trivial on $\cup_{i = 1}^{k} U(Q_i, 1/4)$.
			The norm $\norm{\cdot}_{Y_0}^{\sim} := \rho^* (\, \norm{\cdot}_{X/S}^{\sim})$ over $\omega_{Y_0}(D)$ is characterized as follows: over $Y_0 \setminus ( \cup_{i = 1}^{k} (\{ |z^i_{0}| < 1/2 \} \cup \{ |z^i_{1}| < 1/2 \}))$, it is induced by $g^{TY_0}_{\sim}$ as in Construction \ref{const_norm_div}, over $\{ |z^i_{j}| < 1/2 \}$, $j = 0,1$, $i = 1, \ldots, k$, it is given by
			\begin{equation}
				\big\| d z^i_{j} \otimes s_{D_{Y_0}} / z^i_{j} \big\|_{Y_0}^{\sim} 
				= 
				(1 - (\widetilde{\nu} \circ \rho)) \cdot \big\| d z^i_{j} / z^i_{j} \big\|_{X/S}^{\sim, {\rm{ind}}} + (\widetilde{\nu} \circ \rho).
			\end{equation}
			\end{sloppypar}
			\begin{figure}[h]
				\centering
				\includegraphics[width=0.7\textwidth]{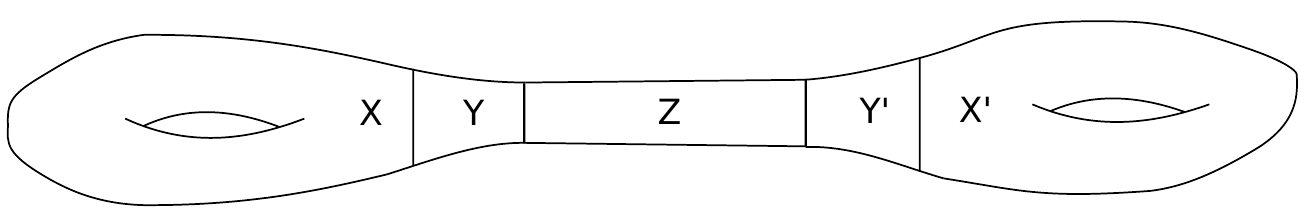}	
				\caption{The metric $g^{TX_t}_{\sim}$. Over the regions $X, X'$, it is induced by $g^{TX}_{0}$. Over the regions $Y, Y'$, it is an interpolation between $g^{TX}_{0}$ and $|dz_0|^2 + |dz_1|^2$, and over the region $Z$, it is given by $|dz_0|^2 + |dz_1|^2$, where $z_0 z_1 = t$.
				}
				\label{fig_graft}
			\end{figure}
			\par 
			We endow $\mathscr{L}_n$ with the metric $\norm{\cdot}_{\mathscr{L}_n}^{\sim}$, induced by the Quillen norm $\norm{\cdot}_Q(g^{TX_t}_{\sim}, h^{\xi} \otimes (\, \norm{\cdot}_{X/S}^{\sim})^{2n} )$ and the singular norm (\ref{defn_norm_D}).
			We endow $\mathscr{L}_n'$ with the norm $\norm{\cdot}_{\mathscr{L}_n'}^{\sim}$, induced by the Quillen norm $\norm{\cdot}_Q(g^{TY_0}_{\sim}, \rho^*(h^{\xi}) \otimes (\, \norm{\cdot}_{Y_0}^{\sim})^{2n} )$ and the norm $\norm{\cdot}_{\Sigma_{X/S}/X}^{\sim}$ (see (\ref{eq_norm_det_ai_defn})) on $\otimes_{i = 1}^{k}(\omega_{Y_0}|_{P_{2i - 1}} \otimes \omega_{Y_0}|_{P_{2i}} )$. By  (\ref{defn_gtx_1}),  the norm $\norm{\cdot}_{\Sigma_{X/S}/X}^{\sim}$ is characterized by the identity
			\begin{equation}\label{eq_char_norm_sigma}
				\big\|
					\otimes_{i = 1}^{k}(d z^i_{0}|_{P_{2i - 1}} \otimes d z^i_{1}|_{P_{2i}} )
				\big\|_{\Sigma_{X/S}/X}^{\sim}
				=
				1.
			\end{equation}
			The metrics $g^{TX_t}_{\sim}$, $\norm{\cdot}_{X/S}^{\sim}$ and $g^{TY_0}_{\sim}$, $\norm{\cdot}_{Y_0}^{\sim}$ satisfy the hypothesis of Theorem \ref{thm_bismut_rest}. Let
			\begin{equation}\label{eq_an_prone}
				A_{-n}' := 24 \zeta'(-1) - 6 \log(2\pi).
			\end{equation}
			By Theorem \ref{thm_bismut_rest} and (\ref{eq_char_norm_sigma}), for a smooth local frame $\upsilon$ of $\mathscr{L}_n$, under the isomorphism (\ref{isom_main}), we have
			\begin{equation}\label{eq_final_proof_bismut}
				\lim_{t \to 0} \Big( \log\big( \|  \upsilon(t) \|_{\mathscr{L}_n}^{\sim} \big)
				-
				2 k \cdot \rk{\xi} \cdot \log |t| \Big)
				=
				 \log\big( \| \upsilon(0) \|_{\mathscr{L}'_n}^{\sim} \big)
				+
				k \cdot \rk{\xi} \cdot A_{-n}'.
			\end{equation}
			\par Let's now modify the metric $g^{TX_t}_{\sim}$ to $g^{TX_t}_{\varkappa}$ so that it would satisfy the assumptions of the Theorem \ref{thm_imm_thm}.
			We define the metrics $g^{TX_t}_{\varkappa}$ on $X_t$, $t \in S \setminus |\Delta|$, as follows: over $X_t \setminus (\cup_{i = 1}^{k} U(Q_i, 1/2))$ it coincides with $g^{TX_t}_{\sim}$, and over $U(Q_i, 1/2)$ it is given by
			\begin{equation}\label{eq_defn_gtxt_1}
				g^{TX_t}_{\varkappa} := (1 - \widetilde{\nu}) \cdot g^{TX_t}_{\sim} + \widetilde{\nu} \cdot  g^{\text{Cyl}}_{i, t},
			\end{equation}
			where the metric $g^{\text{Cyl}}_{j, t}$ was defined in (\ref{defn_g_cyl1}).
			We also define the metric $g^{TY_0}_{\varkappa}$ as follows: over $Y_0 \setminus (\cup_{i = 1}^{k} U(Q_i, 1/2))$ it coincides with $g^{TY_0}_{\sim}$, and over $U(Q_i, 1/2)$ it is given by
			\begin{equation}\label{eq_defn_tgty_0_2}
				g^{TY_0}_{\varkappa}
				:= 
				(1 - (\widetilde{\nu} \circ \rho)) \cdot g^{TY_0}_{\sim} 
				+
				(\widetilde{\nu}  \circ \rho) 
				\cdot
				\big( g^{\rm{Poinc}}_{i, 0} +  g^{\rm{Poinc}}_{i, 1} \big),
			\end{equation}
			where the metrics $g^{\rm{Poinc}}_{i, 0}$, $g^{\rm{Poinc}}_{i, 1}$ are the metrics induced by the Poincaré metric (\ref{reqr_poincare}) with respect to the coordinates $z^{i}_{0}$ and $z^{i}_{1}$. 
			We denote by $\norm{\cdot}_{X/S}^{\varkappa}$ the Hermitian norm on $\omega_{X/S}$ induced by $g^{TX_t}_{\varkappa}$.
			By (\ref{eq_defn_gtxt_1}) (cf. Proposition \ref{prop_gft_good}), we see that the Hermitian norm $\norm{\cdot}_{X/S}^{\varkappa}$ extends continuously over $X \setminus \Sigma_{X / S}$, and the assumptions (\ref{suppos_s3}) are satisfied.
			We define the norm $\norm{\cdot}_{Y_0}^{\varkappa}$ on $\omega_{Y_0}(D)$ as follows
			\begin{equation}\label{eq_norm_y_0_1_last_pf}
				\norm{\cdot}_{Y_0}^{\varkappa} = \rho^* (\, \norm{\cdot}_{X/S}^{\varkappa}).
			\end{equation}
			Then we see trivially that $\norm{\cdot}_{X/S}^{\varkappa}$ satisfies assumptions (\ref{suppos_restr}), and by (\ref{eq_defn_gtxt_1}), (\ref{eq_defn_tgty_0_2}), the associated metric on $Y_0 \setminus D_{Y_0}$, constructed as in Section 1, coincides with $g^{TY_0}_{\varkappa}$.
			\par Let's pause and explain this construction. The metrics $g^{TX_t}_{\varkappa}$ degenerate near the singular fibers to a metric with cusps in the similar way as the hyperbolic metrics (see Theorem \ref{thm_wolp_exp}).
			The advantage of the metrics $g^{TX_t}_{\varkappa}$ over the hyperbolic one is that over the region $\cup_{i = 1}^{k} U(Q_i, 1/2)$, it is independent of any exterior data (as $\pi: X \to S$), and over $X_t \setminus (\cup_{i = 1}^{k} U(Q_i, 1/2))$, the metric $g^{TX_t}_{\varkappa}$ coincides with a metric $g^{TX_t}_{\sim}$, for which Theorem \ref{thm_bismut_rest} holds.
			This means that to get the asymptotic near the singular fibers of the Hermitian norm $\norm{\cdot}_{\mathscr{L}_n}^{\varkappa}$ on the holomorphic line bundle $\mathscr{L}_n$, induced by the Quillen norm $\norm{\cdot}_Q(g^{TX_t}_{\varkappa}, h^{\xi} \otimes (\, \norm{\cdot}_{X/S}^{\varkappa})^{2n} )$ and the singular norm (\ref{defn_norm_D}), it is enough to apply the anomaly formula (the contribution of which is local near the set of singular points) and use (\ref{eq_final_proof_bismut}). 
			Let's make it more precise.
			\begin{figure}[h]
				\centering
				\includegraphics[width=0.7\textwidth]{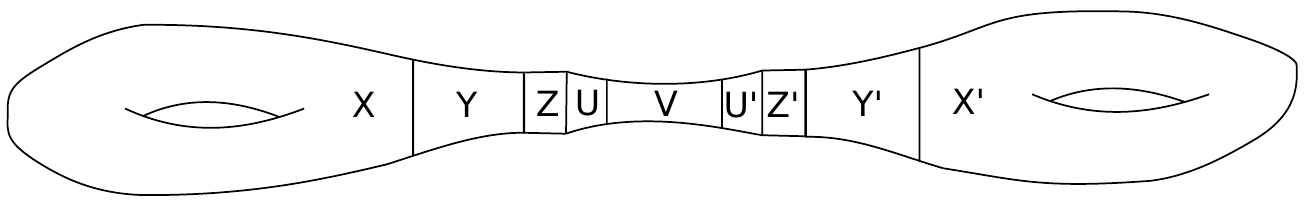}	
				\caption{The metric $g^{TX_t}_{\varkappa}$. Over the regions $X, Y, Z, Z', Y', X'$ it coincides with $g^{TX_t}_{\sim}$.
				Over the regions $U, U'$, it is an interpolation between $g^{TX_t}_{\sim}$ and the hyperbolic cylinder metric, (\ref{defn_g_cyl1}), and over the region $V$, it coincides with the hyperbolic cylinder metric, (\ref{defn_g_cyl1}).
				}
				\label{fig_graft}
			\end{figure}
			\par 
			Let $\norm{\cdot}^{W, \varkappa}_{Y_0}$ be the Wolpert norm on $\otimes_{i = 1}^{k}(\omega_{Y_0}|_{P_{2i - 1}} \otimes \omega_{Y_0}|_{P_{2i}} )$.
			By Definition \ref{defn_wolpert_norm} and (\ref{eq_defn_tgty_0_2})
			\begin{equation}\label{eq_char_wolp_norm_last}
				\big\|
					\otimes_{i = 1}^{k}(d z^i_{0}|_{P_{2i - 1}} \otimes d z^i_{1}|_{P_{2i}} )
				\big\|^{W, \varkappa}_{Y_0}
				=
				1.
			\end{equation}
			\par 
			We endow the complex line $\mathscr{L}_n'$ with the Hermitian norm $\norm{\cdot}_{\mathscr{L}_n'}^{\varkappa}$, induced by the Quillen norm $\norm{\cdot}_Q(g^{TY_0}_{\varkappa}, \rho^*(h^{\xi}) \otimes (\, \norm{\cdot}_{Y_0}^{\varkappa})^{2n} )$ and the Wolpert norm $\norm{\cdot}^{W, \varkappa}_{Y_0}$.
			\par 
			Recall that the regularized integral was defined in (\ref{eq_defn_reg_integ}).
			For $n \in \nat$ we define $A_{-n}'' \in \real$ as follows
			\begin{multline}\label{eq_a_n_2_prime}
				A_{-n}'' := 6 \cdot k^{-1} \cdot \int_{Y_0}^{\textbf{r}} \Big( 
			 					\widetilde{\td} \big(\omega_{Y_0}^{-1}, g^{TY_0}_{\sim}, g^{TY_0}_{\varkappa} \big)  \ch \big(\omega_{Y_0}(D)^n,  (\, \norm{\cdot}^{\sim}_{Y_0})^{2n} \big) 
			 					\\
								+			 		 					
					 			\td \big(\omega_{X/S}^{-1}, g^{TY_0}_{\varkappa} \big) \widetilde{\ch} \big(\omega_{Y_0}(D)^n, (\, \norm{\cdot}^{\sim}_{Y_0})^{2n}, (\, \norm{\cdot}^{\varkappa}_{Y_0})^{2n} \big) \Big).
			\end{multline}
			As we will see further, $A_{-n}''$ depends only on $n \in \nat$ and the choice of $\nu_0$, and does not depend on any exterior data as $\pi : X \to S \ldots$
			We would like to prove that the following holds
			\begin{equation}\label{eq_anom_thrg_g_lim}
				\lim_{t \to 0}
				\Big(
				\log \big( 
				\norm{\cdot}_{\mathscr{L}_n}^{\varkappa} / \norm{\cdot}_{\mathscr{L}_n}^{\sim}
				\big)(t)
				+
				k \cdot \rk{\xi} \cdot \log |t|
				\Big)
				=
				k \cdot \rk{\xi} \cdot A_{-n}''.
			\end{equation}
			\par Assume we have accomplished proving (\ref{eq_anom_thrg_g_lim}).
			Then, by Theorem \ref{thm_comp_appr}, (\ref{eq_char_norm_sigma}), (\ref{eq_defn_tgty_0_2}), (\ref{eq_norm_y_0_1_last_pf}) and (\ref{eq_char_wolp_norm_last}), we deduce that the quantity 
			\begin{equation}\label{eq_limit_almost_final_last}
				(k \cdot \rk{\xi} )^{-1} \log \big( 
				\norm{\cdot}_{\mathscr{L}_n'}^{\varkappa} / \norm{\cdot}_{\mathscr{L}_n'}^{\sim}
				\big)
				=:
				A_{-n}'''
			\end{equation}
			depends only the choice of the function $\nu_0$.
			Thus, by (\ref{eq_final_proof_bismut}), (\ref{eq_anom_thrg_g_lim}) and (\ref{eq_limit_almost_final_last}), we deduce that under the isomorphism (\ref{isom_main}), the following holds
			\begin{equation}\label{eq_thm_for_1_norms}
				\norm{\cdot}_{\mathscr{L}_n}^{\varkappa}|_{|\Delta|} = \exp(k \cdot \rk{\xi} \cdot A_{-n}) \cdot \norm{\cdot}_{\mathscr{L}'_n}^{\varkappa} \quad \text{with} \quad A_{-n} := A_{-n}' + A_{-n}'' - A_{-n}'''.
			\end{equation}
			The essential difference between (\ref{eq_final_proof_bismut}) and (\ref{eq_thm_for_1_norms}) is that (\ref{eq_final_proof_bismut}) is a statement in realms of Theorem \ref{thm_bismut_rest}, and (\ref{eq_thm_for_1_norms}) is a statement in realms of Theorem \ref{thm_imm_thm}, which is exactly what we need.
			\par Now, let $\norm{\cdot}_{X/S}$, $\norm{\cdot}_{Y_0}$ be any norms which satisfy the assumptions of Theorem \ref{thm_imm_thm}.
			We denote by $\norm{\cdot}_{\mathscr{L}_n}$, $\norm{\cdot}_{\mathscr{L}_n'}$ the Hermitian norms on $\mathscr{L}_n, \mathscr{L}_n'$, defined as in Theorem \ref{thm_imm_thm}.
			By Proposition \ref{prop_int_loglog} and the anomaly formula of Bismut-Gillet-Soulé \cite{BGS3} (cf. Theorem \ref{thm_anomaly_cusp} for $m = 0$), applied for the line bundles $\mathscr{L}_n$ and $\mathscr{L}_n'$, we get 
			\begin{equation}\label{eq_limit_anomaly}
				\lim_{t \to 0}
				\log \big( \norm{\cdot}_{\mathscr{L}_n}^{\varkappa} / \norm{\cdot}_{\mathscr{L}_n} \big)(t) = \log \big( \norm{\cdot}_{\mathscr{L}_n'}^{\varkappa} / \norm{\cdot}_{\mathscr{L}_n'} \big).
			\end{equation}
			From  (\ref{eq_thm_for_1_norms}) and (\ref{eq_limit_anomaly}), we deduce that the norm $\norm{\cdot}_{\mathscr{L}_n}$ extends continuously over $S$ and under the isomorphism (\ref{isom_main}), the following holds
			\begin{equation}\label{eq_thm_for_all_norms}
				\norm{\cdot}_{\mathscr{L}_n}|_{\Delta} = \exp(k \cdot \rk{\xi} \cdot A_{-n}) \cdot \norm{\cdot}_{\mathscr{L}'_n},
			\end{equation}
			in other words Theorem \ref{thm_imm_thm} holds, but instead of $C_n$, we have an undetermined constant $A_{-n}$.
			\par \textbf{Now, to prove (\ref{eq_anom_thrg_g_lim})}, we apply the anomaly formula of Bismut-Gillet-Soulé \cite{BGS3} (cf. Theorem \ref{thm_anomaly_cusp} for $m = 0$), and by the triviality of $(\xi, h^{\xi})$ in the neighborhood of $\Sigma_{X/S}$, we deduce that for any $t \in S \setminus |\Delta|$, we have
			\begin{equation}\label{eq_anom_thrg_g}
				\log \Big( 
				\norm{\cdot}_{\mathscr{L}_n}^{\varkappa} / \norm{\cdot}_{\mathscr{L}_n}^{\sim}
				\Big)(t)
				=
				6 \cdot \rk{\xi} \cdot \int_{X_t} G,
			\end{equation}
			where the differential form $G$ is given by
			\begin{multline}\label{eq_g_expr_explc}
				G = \Big( 
			 					\widetilde{\td} \big(\omega_{X/S}^{-1}, (\, \norm{\cdot}_{X/S}^{\sim, {\rm{ind}}})^{-2}, (\, \norm{\cdot}^{\varkappa}_{X/S})^{-2} \big)  \ch \big(\omega_{X/S}^n,  (\, \norm{\cdot}^{\sim}_{X/S})^{2n} \big) 
			 					\\
								+			 		 					
					 			\td \big(\omega_{X/S}^{-1}, (\, \norm{\cdot}^{\varkappa}_{X/S})^{-2} \big) \widetilde{\ch} \big(\omega_{X/S}^n, (\, \norm{\cdot}_{X/S}^{\sim})^{2n}, (\, \norm{\cdot}_{X/S}^{\varkappa})^{2n} \big) \Big)^{[2]}.
			\end{multline} 
			We decompose
			$
				G =  G_1 +  G_2
			$,
			where 
			\begin{align}
				& 
				G_1 = \widetilde{\td} \big(\omega_{X/S}^{-1}, (\, \norm{\cdot}_{X/S}^{\sim, {\rm{ind}}})^{-2}, (\, \norm{\cdot}^{\varkappa}_{X/S})^{-2} \big)^{[2]}, \label{eq_g_1_eqn_full}\\
				& \nonumber
				G_2 =
				\Big( 
			 					\widetilde{\td} \big(\omega_{X/S}^{-1}, (\, \norm{\cdot}_{X/S}^{\sim, {\rm{ind}}})^{-2}, (\, \norm{\cdot}^{\varkappa}_{X/S})^{-2} \big)^{[0]}  \ch \big(\omega_{X/S}^n,  (\, \norm{\cdot}^{\sim}_{X/S})^{2n} \big) 
			 					\\ 
			 	&
			 	\qquad \qquad \qquad \qquad
			 	+			 		 					
				\td \big(\omega_{X/S}^{-1}, (\, \norm{\cdot}^{\varkappa}_{X/S})^{-2} \big) \widetilde{\ch} \big(\omega_{X/S}^n, (\, \norm{\cdot}_{X/S}^{\sim})^{2n}, (\, \norm{\cdot}_{X/S}^{\varkappa})^{2n} \big) \Big)^{[2]}.\label{eq_g_2_eqn_full}
			\end{align}
			By (\ref{eq_g_1_eqn_full}), (\ref{eq_g_2_eqn_full}) and the fact that the norms  $\norm{\cdot}_{X/S}^{\sim, {\rm{ind}}}$,  $\norm{\cdot}_{X/S}^{\sim}$, $\norm{\cdot}_{X/S}^{\varkappa}$ coincide over $X \setminus (\cup_{i = 1}^{k} U(Q_i, 1/2))$, we conclude that $G_i$, $i = 1, 2$ have support over $\cup_{i = 1}^{k} U(Q_i, 1/2)$.
			Moreover, over $U(Q_i, 1)$, by (\ref{defn_gtx_1}), (\ref{eq_defn_tilde_norm}), (\ref{eq_defn_gtxt_1}), the following identities hold
			\begin{equation}\label{eq_explc_eval_norm_pf_end}
			\begin{aligned}
				&  \Big( \Big\|  \frac{d z^i_{0}}{z^i_{0}} - \frac{d z^i_{1}}{z^i_{1}} \Big\|_{X/S}^{\sim, {\rm{ind}}} \Big)^{2} \big|_{X_t}
				= 
				2 \Big( |z^i_{0}|^2 + |z^i_{1}|^2 \Big)^{-1},
				\\
				&  \Big( \Big\| \frac{d z^i_{0}}{z^i_{0}} - \frac{d z^i_{1}}{z^i_{1}} \Big\|_{X/S}^{\sim} \Big)^{2} \big|_{X_t}
				= 
				2 \Big( (1 - \widetilde{\nu}) \cdot (|z^i_{0}|^2 + |z^i_{1}|^2) + \widetilde{\nu}  \Big)^{-1},
				\\
				&
				\Big( \Big\| \frac{d z^i_{0}}{z^i_{0}} - \frac{d z^i_{1}}{z^i_{1}} \Big\|_{X/S}^{\varkappa} \Big)^{2}  \big|_{X_t}
				= 
				2
				\bigg(
				(1 - \widetilde{\nu}) \cdot \big( |z^i_{0}|^2 + |z^i_{0}|^2 \big) 
				\\
				&
				\qquad \qquad \qquad \qquad \qquad \qquad \qquad \qquad \qquad \qquad
				+  
				\frac{\pi^2 \widetilde{\nu}}{(\log |t|)^2} \bigg(
				\sin 
				\frac{\pi \log |z^i_{0}|}{\log |t|}
			\bigg)^{-2}
			 \bigg)^{-1}.
			\end{aligned}
			\end{equation}
			\par Now, as the norm $\norm{\cdot}_{X/S}^{\sim}$ is smooth over $X$, the norm $\norm{\cdot}_{X/S}^{\varkappa}$ is good on $X \setminus \pi^{-1}(|\Delta|)$ with singularities along $\pi^{-1}(\Delta)$ and it has log-log growth along $\Sigma_{X/S}$, by Propositions \ref{prop_int_loglog}, \ref{prop_gft_good} and (\ref{ch_bc_0}), (\ref{ch_bc_2}), (\ref{eq_g_2_eqn_full}), (\ref{eq_explc_eval_norm_pf_end}), we conclude that
			\begin{equation}\label{eq_lim_g_2}
				\lim_{t \to 0} \int_{X_t} G_2 = k \cdot A_{-n}^{II},
			\end{equation}
			where the constant $A_{-n}^{II} \in \real$ is defined by
			\begin{multline}\label{eq_aII_defn}
				A_{-n}^{II} :=
				k^{-1} \cdot \int_{Y_0} \Big(
			 					\widetilde{\td} \big(\omega_{Y_0}^{-1}, g^{TY_0}_{\sim}, g^{TY_0}_{\varkappa} \big)^{[0]}  \ch \big(\omega_{Y_0}(D)^n,  (\, \norm{\cdot}^{\sim}_{Y_0})^{2n} \big) 
			 	\\ 
			 	+			 		 					
				\td \big(\omega_{Y_0}^{-1}, g^{TY_0}_{\varkappa} \big) \widetilde{\ch} \big(\omega_{Y_0}(D)^n, (\, \norm{\cdot}_{Y_0}^{\sim})^{2n}, (\, \norm{\cdot}_{Y_0}^{\varkappa})^{2n} \big) \Big).
			\end{multline}
			By (\ref{eq_explc_eval_norm_pf_end}), the discussion before and (\ref{eq_aII_defn}), we see that $A_{-n}^{II}$ depends only on $n \in \nat$ and not on $\pi : X \to S \ldots$
			\par 
			For $n \in \nat$, we note
			\begin{equation}\label{eq_aI_defn}
				A_{-n}^{I} :=
				k^{-1} \cdot \int_{Y_0}^{\textbf{r}} 
			 					\widetilde{\td} \big(\omega_{Y_0}^{-1}, g^{TY_0}_{\sim}, g^{TY_0}_{\varkappa} \big) + 1.
			\end{equation}
			Similarly to (\ref{eq_aII_defn}), by (\ref{eq_explc_eval_norm_pf_end}), we see that $A_{-n}^{I}$ depends only on $n \in \nat$ and not on $\pi : X \to S \ldots$
			We would like to prove that
			\begin{equation}\label{eq_lim_g_1}
				\lim_{t \to 0} \Big( \int_{X_t} G_1 + k \cdot \frac{\log |t|}{6} \Big) = k \cdot A_{-n}^{I}.
			\end{equation}
			Clearly, the statements (\ref{eq_lim_g_2}), (\ref{eq_aII_defn}) and (\ref{eq_aI_defn}), (\ref{eq_lim_g_1}) imply (\ref{eq_anom_thrg_g_lim}) for $A_{-n}'' =  A_{-n}^{I} +  A_{-n}^{II}$.
			\par To show (\ref{eq_lim_g_1}), we remark that by (\ref{eq_c1_form_pullback_smooth}) and (\ref{eq_explc_eval_norm_pf_end}), for $t \in S \setminus |\Delta|$, we have
			\begin{equation}\label{eq_c1_form_pullback_smooth1}
				c_1(\omega_{X/S}^{-1}, (\, \norm{\cdot}_{X/S}^{\sim, {\rm{ind}}})^{-2})|_{X_t \cap U(Q_i, 1/2)}
				= \frac{4 |z_0^{i}|^2 |t|^2}{(|z_0^{i}|^{4} + |t|^{2})^2} \frac{dz_0^{i} d \overline{z}_0^{i}}{2 \pi \imun}.
			\end{equation}
			By the fact that the norm $\norm{\cdot}_{X/S}^{\sim, {\rm{ind}}}$ coincides with $\norm{\cdot}_{X/S}^{\varkappa}$ away from $U(Q_i, 1/2)$, by Green identities and (\ref{ch_bc_0}), (\ref{ch_bc_2}), (\ref{eq_c1_form_pullback_smooth1}), we see that the following identity holds
			\begin{multline}\label{eq_decomp_f1}
				\int_{X_t} G_1 = \frac{k}{12} \int_{2|t| < |z_0^{i}| < 1/2} \log \big( |z_0^{i}|^2 + |t/z_0^{i}|^{2} \big)  \frac{4 |z_0^{i}|^2 |t|^2}{(|z_0^{i}|^{4} + |t|^{2})^2} \frac{dz_0^{i} d \overline{z}_0^{i}}{2 \pi \imun} 
				\\
				-
				 \frac{1}{6} 
				 \sum_{i = 1}^{k}
				 \int_{2|t| < |z_0^{i}| < 1/2} 
				 \log \Big( \frac{1}{\sqrt{2}} \Big\| \frac{d z^i_{0}}{z^i_{0}} - \frac{d z^i_{1}}{z^i_{1}} \Big\|_{X/S}^{\varkappa} \Big)
				 c_1(\omega_{X/S}, (\, \norm{\cdot}_{X/S}^{\varkappa})^2)
				.
			\end{multline}
			By making change of variables $y := z_0^{i} \cdot  |t|^{-1/2}$, we see that
			\begin{multline}\label{eq_last_thm_decomp222}
				\int_{2|t| < |z_0^{i}| < 1/2} \log \big( |z_0^{i}|^2 + |t /  z_0^{i}|^{2} \big)  \frac{4 |z_0^{i}|^2 |t|^2}{(|z_0^{i}|^{4} + |t|^{2})^2} \frac{dz_0^{i} d \overline{z}_0^{i}}{2 \pi \imun}
				\\
				=
				(\log |t| ) \cdot \int_{2|t|^{1/2} < |y| < |t|^{-1/2}/2}   \frac{4 |y|^2}{(|y|^{4} + 1)^2} \frac{dy d \overline{y}}{2 \pi \imun}
				\\
				+
				\int_{2|t|^{1/2} < |y| < |t|^{-1/2}/2} \log ( |y|^2 + |y|^{-2} )  \frac{4 |y|^2}{(|y|^{4} + 1)^2} \frac{dy d \overline{y}}{2 \pi \imun}.
			\end{multline}
			Also, we see that
			\begin{equation}
			\begin{aligned}
				&\label{eq_lim_log_diverg}
				\int_{y \in \comp} \frac{4 |y|^2}{(|y|^{4} + 1)^2} \frac{dy d \overline{y}}{2 \pi \imun} 
				= 
				- \int_{0}^{+ \infty} \frac{8 r^3 dr}{(r^4 + 1)^2}
				=
				- 2,
				\\
				&
				\int_{y \in \comp} \log ( |y|^2 + |y|^{-2} )  \frac{4 |y|^2}{(|y|^{4} + 1)^2} \frac{dy d \overline{y}}{2 \pi \imun}
				=
				- \int_{0}^{+ \infty} \frac{8 r^3 \log(r^2 + r^{-2}) dr}{(r^4 + 1)^2}
				=
				- 2.
			\end{aligned}
			\end{equation}
			Also, we see easily by (\ref{eq_explc_eval_norm_pf_end}) that
			\begin{equation}
			\begin{aligned}
				\lim_{t \to 0}
				\int_{2|t| < |z_0^{i}| < 1/2} 
				&
				 \log \Big( \frac{1}{\sqrt{2}} \Big\| \frac{d z^i_{0}}{z^i_{0}} - \frac{d z^i_{1}}{z^i_{1}} \Big\|_{X/S}^{\varkappa} \Big)
				 c_1(\omega_{X/S}, (\, \norm{\cdot}_{X/S}^{\varkappa})^2)
				 \\
				 =
				 &
				 -
				 \frac{1}{2}
				 \int_{0 < |z_0^{i}| < 1/2} 
				 \log \Big(
				(1 - \widetilde{\nu})  |z^i_{0}|^2
				+  
				\frac{\widetilde{\nu}}{(\log |z^i_{0}|)^{2}}
				\Big)
				 c_1(\omega_{Y_0}(D), (\, \norm{\cdot}_{Y_0}^{\varkappa})^2)
				 \\
				 &
				 -
				 \frac{1}{2}
				 \int_{0 < |z_1^{i}| < 1/2} 
				 \log \Big(
				(1 - \widetilde{\nu})  |z^i_{1}|^2
				+  
				\frac{\widetilde{\nu}}{(\log |z^i_{1}|)^{2}}
				\Big)
				 c_1(\omega_{Y_0}(D), (\, \norm{\cdot}_{Y_0}^{\varkappa})^2).
			\end{aligned}
			\end{equation}
			However, over the set $|z_0^{i}| < 1/2$, by (\ref{eq_explc_eval_norm_pf_end}), we have
			\begin{equation}\label{eq_repr_rapp_gty}
				\log \Big(
				(1 - \widetilde{\nu})  |z^i_{0}|^2
				+  
				\frac{\widetilde{\nu}}{(\log |z^i_{0}|)^{2}}
				\Big) - 2 \log |z^i_{0}|
				=
				\log \big( g^{TY_0}_{\varkappa} / g^{TY_0}_{\sim} \big).
			\end{equation}
			Similar identity holds over the set $|z_1^{i}| < 1/2$.
			Thus, by (\ref{eq_repr_rapp_gty}) and the fact that $c_1(\omega_{Y_0}^{-1}, g^{TY_0}_{\sim}) = 0$ over $\{ |z_0^{i}| < 1/2 \}$, we deduce
			\begin{multline}
				\frac{1}{2}
				\sum_{j = 0}^{1}
				\sum_{i = 1}^{k}
				\int_{0 < |z_j^{i}| < 1/2} 
				 \log \Big(
				(1 - \widetilde{\nu})  |z^i_{j}|^2
				+  
				\frac{\widetilde{\nu}}{(\log |z^i_{j}|)^{2}}
				\Big)
				 c_1(\omega_{Y_0}(D), (\, \norm{\cdot}_{Y_0}^{\varkappa})^2)
				 \\
				 =
				6
				\int_{Y_0}^{\textbf{r}}
				\widetilde{\td} \big(\omega_{Y_0}^{-1}, g^{TY_0}_{\sim}, g^{TY_0}_{\varkappa} \big)
				 +
				 k
				 \cdot
				 \sum_{j = 0}^{1}
				 \int_{0 < |z^i_{j}| < 1/2}^{\textbf{r}}
				 \log |z^i_{j}|
				 c_1(\omega_{Y_0}(D), (\, \norm{\cdot}_{Y_0}^{\varkappa})^2).
			\end{multline}
			However, by (\ref{eq_explc_eval_norm_pf_end}) and Green identities one deduces that for a standard Laplacian $\Delta = \frac{\partial^2}{\partial x^2} + \frac{\partial^2}{\partial y^2}$ over $\comp$, we have
			\begin{multline}\label{eq_regular_computation}
				\int_{\epsilon < |z^i_{0}| < 1/2}
				 \log |z^i_{0}|
				 \cdot
				 c_1(\omega_{Y_0}(D), (\, \norm{\cdot}_{Y_0}^{\varkappa})^2)
				 \\
				 =
				 \frac{1}{4 \pi}
				 \int_{\epsilon < |z^i_{0}| < 1/2}
				 \log |z^i_{0}|
				 \cdot
				 \Delta
				 \log \Big(
				 (1 - \widetilde{\nu})
				 +  
				 \frac{\widetilde{\nu}}{|z^i_{0} \log |z^i_{0}||^2}
				 \Big)
				 dx dy
				\\
				=
				- \epsilon \cdot \Big( \frac{1}{\epsilon} \cdot  \log ( \epsilon | \log \epsilon |)
				-
				(\log \epsilon ) \cdot \Big( \frac{1}{\epsilon} + \frac{1}{\epsilon |\log \epsilon|} \Big)
				\Big)
				=
				1 -  \log| \log \epsilon|.
			\end{multline}
			Thus, we deduce by (\ref{eq_regular_computation}) that
			\begin{equation}\label{eq_regular_computation2}
				 \int_{|z^i_{0}| < 1/2}^{\textbf{r}}
				 \log |z^i_{0}|
				 c_1(\omega_{Y_0}(D), (\, \norm{\cdot}_{Y_0}^{\varkappa})^2) = 1.
			\end{equation}
			By (\ref{eq_decomp_f1})-(\ref{eq_regular_computation2}) the proof of (\ref{eq_lim_g_1}), and thus (\ref{eq_anom_thrg_g_lim}), is complete.
		\end{proof}
		\begin{rem}\label{rem_comp_pert_special_case}
			By Theorem \ref{thm_imm_thm},  (\ref{eq_an_prone}), (\ref{eq_a_n_2_prime}), (\ref{eq_limit_almost_final_last}) and (\ref{eq_thm_for_1_norms}), we deduce that 
			\begin{equation}
			\begin{aligned}
				& 12 \log \big(
				\norm{\cdot}_Q(g^{TY_0}_{\varkappa}, \rho^* (h^{\xi}) \otimes (\, \norm{\cdot}_{Y_0}^{\varkappa})^{2n} )
				/
				\norm{\cdot}_Q(g^{TY_0}_{\sim}, \rho^* (h^{\xi}) \otimes (\, \norm{\cdot}_{Y_0}^{\sim})^{2n} )
				\big)
				\\
				& 
				\qquad \qquad
				=
				6
				 \int_{Y_0}^{\textbf{r}} \Big( 
			 					\widetilde{\td} \big(\omega_{Y_0}^{-1}, g^{TY_0}_{\sim}, g^{TY_0}_{\varkappa} \big)  
			 					\ch (\rho^*(\xi), \rho^* (h^{\xi}))
			 					\ch \big(\omega_{Y_0}(D)^n,  (\, \norm{\cdot}^{\sim}_{Y_0})^{2n} \big) 
			 					\\
								& 
								\qquad \qquad \phantom{= 6 \,  \int_{Y_0}^{\textbf{r}} \Big(}
								+			 		 					
					 			\td \big(\omega_{X/S}^{-1}, g^{TY_0}_{\varkappa} \big)	 					
					 			\ch (\xi, h^{\xi})
					 			\widetilde{\ch} \big(\omega_{Y_0}(D)^n, (\, \norm{\cdot}^{\sim}_{Y_0})^{2n}, (\, \norm{\cdot}^{\varkappa}_{Y_0})^{2n} \big) \Big)
				\\
				& 
				\qquad \qquad \phantom{= \,}
				+
				k \cdot \rk{\xi}
				\Big(
				24 \zeta'(-1) - 6 \log(2\pi)
				+
				1
				-
				C_{-n}
				\Big).				
				\end{aligned}
			\end{equation}
			In other words, we see that Theorem \ref{thm_full_comp_pert} holds for $\overline{M} = Y_0$, $g^{T \overline{M}} := g^{TY_0}_{\sim}$, $g^{T M} := g^{TY_0}_{\varkappa}$, $h^{\xi} = h^{\xi}_{0} := \rho^*(h^{\xi})$.
		\end{rem}
		\begin{proof}[Proof of Theorem \ref{thm_full_comp_pert}.]
			Let's say few words about the general strategy of the proof. First, for fixed $n \in \nat$, $(\overline{M}, D_M, g^{TM})$, $g^{TM}$, $g^{T \overline{M}}$, $\norm{\cdot}_{\overline{M}}$, $h^{\xi}$, $h^{\xi}_0$, we define $E_{-n} \in \real$ by equality (\ref{eq_comp_pert}).
			Then by using anomaly formula, Theorem \ref{thm_anomaly_cusp}, we prove that $E_{-n}$, defined by this formula, is independent of $g^{TM}$, $g^{T \overline{M}}$, $\norm{\cdot}_{\overline{M}}$, $h^{\xi}$, $h^{\xi}_0$. Then by using relative compact perturbation theorem, Theorem \ref{thm_comp_appr}, we prove that $E_{-n}$ is independent of $(\overline{M}, D_M)$. Finally, by using Remark \ref{rem_comp_pert_special_case}, we conclude.
			Now let's explain each of this steps in detail.
			\par 
			First, in the proof of independence of $E_{-n}$, defined by (\ref{eq_comp_pert}), on $g^{TM}$, $g^{T \overline{M}}$, $\norm{\cdot}_{\overline{M}}$, $h^{\xi}$, $h^{\xi}_0$, we will only restrict ourselves to the proof of independence on $g^{T \overline{M}}$, as the other cases are identical.
			\par 
			Now, to show independence on $g^{T \overline{M}}$, by the anomaly formula of Bismut-Gillet-Soulé \cite{BGS3} (cf. Theorem \ref{thm_anomaly_cusp} for $m = 0$), it is enough to prove that for any two Kähler metrics $g^{T \overline{M}}_{1}$, $g^{T \overline{M}}_{2}$ over $\overline{M}$, the following holds 
			\begin{multline}
				\int_{M}^{\textbf{r}} 
		 		\Big[ 
		 			\Big(
	 				\widetilde{\td} \big(\omega_{\overline{M}}^{-1}, g^{T\overline{M}}_{1}, g^{TM} \big) - \widetilde{\td} \big(\omega_{\overline{M}}^{-1}, g^{T\overline{M}}_{2}, g^{TM} \big) 
	 				\Big) \ch \big(\xi, h^{\xi}_0 \big)  \ch \big(\omega_M(D)^n, \norm{\cdot}_{\overline{M}}^{2n} \big) 
	 			\Big]^{[2]}
	 			\\
	 			=
	 			\int_{\overline{M}}
		 		\Big[ 
	 				\widetilde{\td} \big(\omega_{\overline{M}}^{-1}, g^{T\overline{M}}_{1},  g^{T\overline{M}}_{2} \big) \ch \big(\xi, h^{\xi}_0 \big)  \ch \big(\omega_M(D)^n, \norm{\cdot}_{\overline{M}}^{2n} \big) 
	 			\Big]^{[2]}
	 			\\
	 			+
	 			\frac{\rk{\xi}}{6} \ln \Big( \norm{\cdot}^{D_M}_{\overline{M}, 1}/ \norm{\cdot}^{D_M}_{\overline{M}, 2} \Big),
			\end{multline}
			where the norms $\norm{\cdot}^{D_M}_{\overline{M}, 1}$, $\norm{\cdot}^{D_M}_{\overline{M}, 2}$ on the complex line $\otimes_{P \in D_M} \omega_{\overline{M}}|_{P}$ are induced by $g^{T\overline{M}}_{1}$ and $g^{T\overline{M}}_{2}$ respectively.
			Since the function $\log (g^{T\overline{M}}_{1} / g^{T\overline{M}}_{2})$ is integrable, it is enough to prove that 
			\begin{multline}
			\int_{M}^{\textbf{r}} 
		 		\Big[ 
		 			\Big(
	 				\widetilde{\td} \big(\omega_{\overline{M}}^{-1}, g^{T\overline{M}}_{1}, g^{TM} \big) - \widetilde{\td} \big(\omega_{\overline{M}}^{-1}, g^{T\overline{M}}_{2}, g^{TM} \big) 
	 				\Big)
	 			\Big]^{[2]}
	 			\\
	 			=
	 			\int_{\overline{M}}
		 		\Big[ 
	 				\widetilde{\td} \big(\omega_{\overline{M}}^{-1}, g^{T\overline{M}}_{1},  g^{T\overline{M}}_{2} \big) 
	 			\Big]^{[2]}
	 			+
	 			\frac{1}{6} \ln \Big( \norm{\cdot}^{D_M}_{\overline{M}, 1}/ \norm{\cdot}^{D_M}_{\overline{M}, 2} \Big).
			\end{multline}
			By using (\ref{ch_bc_0}) and (\ref{ch_bc_2}), we see that it is enough to prove that 
			\begin{multline}
			\int_{M}^{\textbf{r}} 
				\Big(
		 		\log (g^{T\overline{M}}_{1} / g^{TM}) \big( c_1(\omega_{\overline{M}}^{-1}, g^{T\overline{M}}_{1}) + c_1(\omega_{\overline{M}}^{-1}, g^{TM})\big)
		 		\\
		 		-
		 		\log (g^{T\overline{M}}_{2} / g^{TM}) \big( c_1(\omega_{\overline{M}}^{-1}, g^{T\overline{M}}_{2}) + c_1(\omega_{\overline{M}}^{-1}, g^{TM})\big)
		 		\Big)
	 			\\
	 			=
	 			\int_{\overline{M}}
		 		\log (g^{T\overline{M}}_{1} / g^{T\overline{M}}_{2}) \big( c_1(\omega_{\overline{M}}^{-1}, g^{T\overline{M}}_{1}) + c_1(\omega_{\overline{M}}^{-1},  g^{T\overline{M}}_{2})\big)
	 			+
	 			2 \ln \Big( \norm{\cdot}^{D_M}_{\overline{M}, 1}/ \norm{\cdot}^{D_M}_{\overline{M}, 2} \Big).
			\end{multline}
			Recall the trivial identity 
			\begin{equation}\label{eq_rel_c_1_c_1_diff_metr}
				c_1(\omega_{\overline{M}}^{-1}, g^{T\overline{M}}_{1}) = c_1(\omega_{\overline{M}}^{-1}, g^{T\overline{M}}_{2}) + \frac{\partial \dbar}{2 \pi \imun} \log (g^{T\overline{M}}_{1} / g^{T\overline{M}}_{2}).
			\end{equation}
			Using (\ref{eq_rel_c_1_c_1_diff_metr}), we see that it is enough to prove
			\begin{multline}
			\int_{M}^{\textbf{r}} 
				\Big(
		 		\log (g^{T\overline{M}}_{1} / g^{TM}) \frac{\partial \dbar}{2 \pi \imun} \log (g^{T\overline{M}}_{1} / g^{T\overline{M}}_{2})
		 		+
		 		 \log (g^{T\overline{M}}_{1} / g^{T\overline{M}}_{2}) c_1(\omega_{\overline{M}}^{-1}, g^{TM}) 
		 		\Big)
	 			\\
	 			=
	 			\int_{\overline{M}}
		 		\log (g^{T\overline{M}}_{1} / g^{T\overline{M}}_{2}) c_1(\omega_{\overline{M}}^{-1},  g^{T\overline{M}}_{2})
	 			+
	 			2 \ln \Big( \norm{\cdot}^{D_M}_{\overline{M}, 1}/ \norm{\cdot}^{D_M}_{\overline{M}, 2} \Big),
			\end{multline}
			which is a simple application of Green identities.
			\par Now, as we already know that the constant $E_{-n}$, defined by (\ref{eq_comp_pert}), doesn't depend on $g^{TM}$, $g^{T \overline{M}}$, $\norm{\cdot}_{\overline{M}}$, $h^{\xi}$, $h^{\xi}_0$, to show that it is independent of $(\overline{M}, D_M)$, it is enough to apply relative compact perturbation theorem \ref{thm_comp_appr} for a flattening $g^{T\overline{M}}$ of $g^{TM}$ and for $h^{\xi} = h^{\xi}_{0}$ trivial around the cusps.
			\par Now we know that the constant $E_{-n}$, defined by (\ref{eq_comp_pert}), doesn't depend on $m \in \nat$, $g^{TM}$, $g^{T \overline{M}}$, $\norm{\cdot}_{\overline{M}}$, $h^{\xi}$, $h^{\xi}_0$, $(\overline{M}, D_M)$. Thus, it is a universal constant, depending only on $n \in \nat$. To show that it is indeed equal to (\ref{eq_defn_e_k}), it is enough to verify it at least for one example, which is done in Remark \ref{rem_comp_pert_special_case}. This concludes the proof.
		\end{proof}
 		
		\bibliographystyle{abbrv}

\Addresses

\end{document}